\begin{document}

\newtheorem{theorem}{Theorem}[section]
\newtheorem{lemma}[theorem]{Lemma}
\newtheorem{proposition}[theorem]{Proposition}
\newtheorem{cor}[theorem]{Corollary}
\newtheorem{defn}[theorem]{Definition}
\newtheorem*{remark}{Remark}
\newtheorem{conj}[theorem]{Conjecture}

\numberwithin{equation}{section}

\newcommand{\Z}{{\mathbb Z}} 
\newcommand{\Q}{{\mathbb Q}}
\newcommand{\R}{{\mathbb R}}
\newcommand{\C}{{\mathbb C}}
\newcommand{\N}{{\mathbb N}}
\newcommand{\FF}{{\mathbb F}}
\newcommand{\fq}{\mathbb{F}_q}
\newcommand{\rmk}[1]{\footnote{{\bf Comment:} #1}}

\renewcommand{\mod}{\;\operatorname{mod}}
\newcommand{\ord}{\operatorname{ord}}
\newcommand{\TT}{\mathbb{T}}
\renewcommand{\i}{{\mathrm{i}}}
\renewcommand{\d}{{\mathrm{d}}}
\renewcommand{\^}{\widehat}
\newcommand{\HH}{\mathbb H}
\newcommand{\Vol}{\operatorname{vol}}
\newcommand{\area}{\operatorname{area}}
\newcommand{\tr}{\operatorname{tr}}
\newcommand{\norm}{\mathcal N} 
\newcommand{\intinf}{\int_{-\infty}^\infty}
\newcommand{\ave}[1]{\left\langle#1\right\rangle} 
\newcommand{\Var}{\operatorname{Var}}
\newcommand{\Prob}{\operatorname{Prob}}
\newcommand{\sym}{\operatorname{Sym}}
\newcommand{\disc}{\operatorname{disc}}
\newcommand{\CA}{{\mathcal C}_A}
\newcommand{\cond}{\operatorname{cond}} 
\newcommand{\lcm}{\operatorname{lcm}}
\newcommand{\Kl}{\operatorname{Kl}} 
\newcommand{\leg}[2]{\left( \frac{#1}{#2} \right)}  
\newcommand{\Li}{\operatorname{Li}}

\newcommand{\sumstar}{\sideset \and^{*} \to \sum}

\newcommand{\LL}{\mathcal L} 
\newcommand{\sumf}{\sum^\flat}
\newcommand{\Hgev}{\mathcal H_{2g+2,q}}
\newcommand{\USp}{\operatorname{USp}}
\newcommand{\conv}{*}
\newcommand{\dist} {\operatorname{dist}}
\newcommand{\CF}{c_0} 
\newcommand{\kerp}{\mathcal K}

\newcommand{\Cov}{\operatorname{cov}}

\newcommand{\ES}{\mathcal S} 
\newcommand{\EN}{\mathcal N} 
\newcommand{\EM}{\mathcal M} 
\newcommand{\Sc}{\operatorname{Sc}} 
\newcommand{\Ht}{\operatorname{Ht}}

\newcommand{\E}{\operatorname{E}} 
\newcommand{\sign}{\operatorname{sign}} 

\newcommand{\divid}{d} 
\newcommand{\inv}{\theta}

\newcommand{\vleq}{\rotatebox[origin=c]{90}{$\leq$}}


\theoremstyle{plain}

\newcommand\MOD{\textrm{ (mod }}
\newcommand\spann{\mathrm{span}}
\newcommand\primeE{\mathop{\vcenter{\hbox{\relsize{+2}$\mathbf{E}$}}}}
\newcommand\Tr{\mathrm{Tr}}
\newcommand\Ss{\mathcal{S}}
\newcommand\Dd{\mathcal{D}}
\newcommand\Uu{\mathcal{U}}
\newcommand\supp{\mathrm{supp}\;}
\newcommand\sgn{\mathrm{sgn}}
\newcommand\bb{\mathbb}
\newcommand\pvint{-\!\!\!\!\!\!\!\int_{-\infty}^\infty}
\def\res{\mathop{\mathrm{Res}}}
\newcommand\primesum{\sideset{}{'}\sum}
\newcommand\primeprod{\sideset{}{'}\prod}

\newtheorem{ex}[theorem]{Example}

\newcommand{\Poly}{{\operatorname{Poly}}}
\renewcommand{\mod}{\;\operatorname{mod}}
\renewcommand{\i}{{\mathrm{i}}}
\renewcommand{\d}{{\mathrm{d}}}
\renewcommand{\^}{\widehat}
\newcommand{\Sgn}{{\operatorname{Sgn}}}

\newcommand{\diag}{\operatorname{diag}}

\title
{Sums of  divisor functions in  $\fq[t]$ and matrix integrals }

\author{J.P. Keating, B.. Rodgers, E. Roditty-Gershon and Z. Rudnick}

\address{School of Mathematics, University of Bristol, Bristol BS8 1TW, UK}
\email{j.p.keating@bristol.ac.uk}

\address{Institut f\"ur Mathematik, Universit\"at Z\"urich
Winterthurerstr. 190, CH-8057 Z\"urich, Switzerland}
\email{brad.rodgers@math.uzh.ch}

\address{School of Mathematics, University of Bristol, Bristol BS8 1TW, UK}
\email{rodittye@post.tau.ac.il}

\address{Raymond and Beverly Sackler School of Mathematical Sciences,
Tel Aviv University, Tel Aviv 69978, Israel}
\email{rudnick@post.tau.ac.il}
\date{\today}

\thanks{ JPK gratefully acknowledges support under EPSRC Programme Grant EP/K034383/1
LMF: $L$-Functions and Modular Forms, a grant from Leverhulme Trust, a Royal Society Wolfson Merit Award, a Royal Society Leverhulme Senior Research Fellowship, and by the Air Force Office of Scientific Research, Air Force Material Command, USAF, under grant number FA8655-10-1-3088.  ZR is similarly grateful for support from the
European Research Council under the European Union's Seventh
Framework Programme (FP7/2007-2013) / ERC grant agreement
n$^{\text{o}}$ 320755, and from the Israel Science Foundation
(grant No. 925/14). }

\begin{abstract}
We study the mean square of sums of the $k$th divisor function $d_k(n)$ over short intervals and arithmetic progressions for the rational function
field over a finite field of $q$ elements.  In the limit as $q\rightarrow\infty$ we establish a relationship with a matrix integral over the unitary group.  Evaluating this integral enables us to
compute the mean square of the sums of $d_k(n)$ in terms of a lattice point count.  This lattice point count can in turn be calculated in terms of a certain piecewise polynomial function, which we analyse.  Our results suggest general conjectures for the corresponding classical problems over the integers, which agree with the few  cases where the answer is known.
\end{abstract}

\maketitle


 \section{Introduction}

The goal of this paper is to study the mean square of sums of
divisor functions over short intervals, for the rational function
field over a finite field, and to use the results obtained to gain
insight into the corresponding classical problem over the integers.

\subsection{Classical theory}
The   $k$-th divisor function $\divid_k(n)$  gives the number of ways of writing
a (positive) integer as a product of $k$ positive integers:
\begin{equation}\label{divdef}
\divid_k(n):=\#\{(a_1,\dots ,a_k):n=a_1\cdot \ldots \cdot a_k, \quad
a_1,\dots ,a_k\geq 1\} \;,
\end{equation}
the classical divisor function being $\divid(n)=\divid_2(n)$.

Dirichlet's divisor problem addresses the size of the remainder term
$\Delta_2(x)$ in partial sums of the divisor function:
\begin{equation}\label{def of Delta2}
\Delta_2(x):=\sum_{n\leq x} \divid_2(n)-x\Big(\log
x+(2\gamma-1)\Big)
\end{equation}
 where $\gamma$ is the Euler-Mascheroni constant. For the higher divisor
 functions one defines a remainder term $\Delta_k(x)$ similarly as the
difference between the partial sums $\sum_{n\leq x} \divid_k(n)$ and
a smooth term $xP_{k-1}(\log x)$ where $P_{k-1}(u)$ is a certain
polynomial of degree $k-1$; see, for example, \cite{Titch} Chapter XII.

The mean square of $\Delta_2(x)$ was computed by Cr\'amer
\cite{Cramer} for $k=2$, and by Tong \cite{Tong} for $k\geq 3$ 
(assuming the Riemann Hypothesis (RH) if $k\geq 4$), to be 
\begin{equation}\label{eq:Tong}
\frac 1X\int_X^{2X} \Delta_k(x)^2 dx \sim c_k X^{1-\frac 1k},
\end{equation}
for a certain constant $c_k$. Heath-Brown \cite{HB1992} showed that
$\Delta_k(x)/x^{\frac 12-\frac 1{2k}}$ has a limiting value
distribution (for $k\geq 4$ one needs to assume RH); it is
non-Gaussian.

\subsection{The divisor function in short intervals}


 Let
\begin{equation}\label{Def of Delta(x:H)}
\Delta_k(x;H) = \Delta_k(x+H)-\Delta_k(x)
\end{equation}
be the remainder term for sums of $d_k$ over short intervals
$[x,x+H]$. Our main concern is to understand its mean square.


For relatively long intervals, 
Lester \cite{Lester} proves an asymptotic (assuming RH for $k>3$) 
similar to the result  \eqref{eq:Tong}:
\begin{equation}\label{ms for large H}
\frac 1X\int_X^{2X} \Big(\Delta_k(x,H) \Big)^2 dx \sim 2c_k
X^{1-\frac 1k}, \quad X^{1-\frac 1k+o(1)}< H<  X^{1-o(1)}
\end{equation}

The interesting range for us is that of shorter intervals:
$H<X^{1-\frac 1k}$. For $k=2$, Jutila \cite{Jutila1984}, Coppola and
Salerno \cite{CS}, and Ivi\'c \cite{Ivic Bordeaux 2009,  Ivic
RramanujanJ} show that, for $X^\epsilon<H< X^{1/2-\epsilon}$,  the
mean square of $\Delta_2(x,H)$ is asymptotically equal to
\begin{equation}\label{Ivic}
\frac 1X\int_X^{2X} \Big(\Delta_2(x,H)\Big)^2 dx \sim HF_3(\log
\frac{X^{1/2}}{H})
\end{equation}
for a certain cubic polynomial $F_3$. In that regime, Lester and
Yesha \cite{LY} showed that $\Delta_2(x,H)$, normalized to have unit
mean-square using \eqref{Ivic}, has a Gaussian value distribution,
at least for a narrow range of $H$ below $X^{1/2}$, 
the conjecture being that this should hold for
$X^\epsilon<H<X^{1/2-\epsilon}$ for any $\epsilon>0$.

For $k\geq 3$, Milinovich and Turnage-Butterbaugh \cite[p. 182]{MTB}
give an upper bound,  assuming  RH, of
\begin{equation}\label{eqMTB}
\frac 1X\int_X^{2X} \Big(\Delta_k(x,H) \Big)^2 dx \ll H(\log
X)^{k^2+o(1)}\;, \quad X^\epsilon<H<X^{1-\epsilon}
\end{equation}

In concurrent work, Lester \cite{Lester}  shows that for $k\geq 3$,
assuming the Lindel\"of Hypothesis, if $h(x)=(\frac{x}{X})^{1-\frac 1k} X^\delta$, 
\begin{equation}\label{Lester}
\frac 1X\int_X^{2X} \Big(\Delta_k(x,h(x)) \Big)^2
dx \sim a_k \cdot   \frac{k^{k^2-1}}{\Gamma(k^2)}(1-\frac
1k-\delta)^{k^2-1}  \cdot  \frac{2^{2-\frac 1k}-1}{2-\frac
1k} X^\delta   \cdot  (\log  X)^{k^2-1},
\end{equation}
provided  $1-\frac 1{k-1}<\delta<1-\frac 1k$, 
where
\begin{equation}\label{Lester const}
 a_k = \prod_p \Big\{ (1- \frac 1p)^{k^2}\sum_{j=0}^\infty
\left(\frac{\Gamma(k+j)}{\Gamma(k)j!}\right)^2 \frac 1{p^j} \Big\} =
\prod_p \Big\{(1-\frac 1p)^{(k-1)^2}\sum_{j=0}^{k-1}
\binom{k-1}{j}^2p^{-j} \Big\}.
\end{equation}
For $k=3$ and $\frac 7{12} <\delta<\frac 23$, 
the result is unconditional.


\subsection{A Conjecture}\label{Conj-intro}
 We did not find any conjecture in the literature
 for the order of growth of the mean-square of $\Delta_k(x;H)$ for small $H$.
Based on Theorems~\ref{lattice_count} and \ref{thm:Asymp of I}, we believe
the following:
\begin{conj}\label{conj:mean square SI}
If $0<\delta <1-\frac 1{k}$ is fixed, then for $H=X^\delta$,
\begin{equation}
\frac 1X\int_X^{2X} \Big(\Delta_k(x,H) \Big)^2 dx \sim  a_k \mathcal
P_k(\delta) H(\log X)^{k^2-1}\;,\quad X\to \infty
\end{equation}
where $a_k$ is given by \eqref{Lester const}, and $\mathcal
P_k(\delta)$ is a piecewise polynomial function of $\delta$, of
degree $k^2-1$, given by
\begin{equation}\label{def of P_k}
\mathcal P_k(\delta) =(1-\delta)^{k^2-1} \gamma_k(\frac 1{1-\delta})
\;.
\end{equation}
Here
\begin{equation} \label{def of gamma2}
\gamma_k(c) = \frac{1}{k!\, G(1+k)^2} \int_{[0,1]^k} \delta_c(w_1 +
\ldots + w_k) \prod_{i< j}(w_i-w_j)^2\, d^k w,
\end{equation}
where $\delta_c(x) =\delta(x-c)$ is the delta distribution
translated by $c$, and $G$ is the Barnes $G$-function, so that for
positive integers $k$, $G(1+k) = 1!\cdot 2! \cdot 3! \cdots (k-1)!$.
\end{conj}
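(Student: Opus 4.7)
Since this is a conjecture rather than a theorem, the proposal below describes a heuristic plan and the main obstacles to a fully rigorous proof. The primary support for the conjecture is the function-field analogue, proved rigorously in Theorems~\ref{lattice_count} and \ref{thm:Asymp of I}, together with the match to a matrix integral over the unitary group in the $q\to\infty$ limit. The Euler product $a_k$ of \eqref{Lester const} appears as the standard arithmetic correction when passing from $\mathbb{F}_q[t]$ to $\mathbb{Z}$, exactly as it does in Lester's long-interval asymptotic \eqref{Lester}.

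The concrete strategy over $\mathbb{Z}$ is as follows. First, use Perron's formula or a Heath-Brown-type identity to express $\sum_{x<n\le x+H} d_k(n)$ as a smoothed contour integral of $\zeta(s)^k$; shifting the contour to the critical line yields an expansion of $\Delta_k(x,H)$ as an oscillating sum over non-trivial zeros of $\zeta$, weighted by a kernel depending on $H$ and $x$. Second, square and integrate over $x\in[X,2X]$: diagonal pairs of zeros should produce a main term governed by a shifted $2k$th moment of $\zeta$ on the critical line, restricted to a window of imaginary parts dual to $[X,2X]$, while off-diagonal pairs are expected to be of lower order by almost-orthogonality. Third, evaluate this shifted moment via the Conrey--Farmer--Keating--Rubinstein--Snaith (CFKRS) recipe, which expresses it as a unitary matrix integral of a product of characteristic polynomials. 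The $\delta$-dependence enters through the ratio of the short-interval length $H=X^\delta$ to the averaging scale $X$, and an appropriate change of variables (with effective matrix size of order $(1-\delta)\log X$) turns the CFKRS expression into the constrained eigenvalue integral $\gamma_k\bigl(1/(1-\delta)\bigr)$, with the factor $(1-\delta)^{k^2-1}$ arising as the associated Jacobian. The arithmetic constant $a_k$ emerges from the local factors built into the CFKRS recipe.

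The principal obstacle is that the shifted $2k$th moment of $\zeta$ on the critical line is unknown for $k\ge 3$ even under the Riemann Hypothesis; moreover, the present conjecture requires considerably more than a leading-order moment asymptotic, since one needs the full dependence of the constant on $\delta$ through the piecewise polynomial $\mathcal P_k(\delta)$. A rigorous proof would therefore require both a conditional CFKRS moment theorem and precise control of its dependence on the length scale of the short interval, both of which are well beyond current techniques. What this paper contributes is a function-field computation that pins down, unambiguously, both the shape of the conjectured asymptotic and the identification of the matrix-integral constant $\gamma_k(c)$.
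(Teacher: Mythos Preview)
Your heuristic is sound and lands in the same place as the paper's, but the route is organized differently. The paper's justification (Section~\ref{Conj}) avoids the explicit zero expansion and diagonal/off-diagonal split: it instead sets up a direct Fourier-type duality between the shifted moment
\[
Q_k(\alpha,T)=\frac{1}{T(\log T)^{k^2}}\int_0^T \zeta\Big(\tfrac12+\tfrac{i\alpha}{\log T}+it\Big)^k \zeta\Big(\tfrac12-\tfrac{i\alpha}{\log T}-it\Big)^k\,dt
\]
and the quantity $T^{1-u}\Delta_k^2(T^u;T^{u-1})/(\log T)^{k^2-1}$, regarded as dual functions of $\alpha$ and $u$. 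It then invokes K\"osters' conjecture (a particular instance of CFKRS) that $Q_k(\alpha,T)\to a_k\,W_k(\alpha)$, where $W_k$ is the large-$N$ limit of a unitary matrix integral; since that matrix integral is exactly the generating series \eqref{generating} for $I_k(m;N)$, Theorem~\ref{thm:Asymp of I} gives $W_k(\alpha)=\int_0^k e^{-2i\alpha u}\gamma_k(u)\,du$, and Fourier inversion with the substitution $u=1/(1-\delta)$ immediately produces $\mathcal P_k(\delta)=(1-\delta)^{k^2-1}\gamma_k(1/(1-\delta))$. Your approach, via Perron, a zero expansion, and then CFKRS on the resulting moment, reaches the same conjectural input but requires you to argue separately that off-diagonal zero pairs are negligible and to extract the $\delta$-dependence by hand; the paper's packaging makes both the appearance of $\gamma_k$ and the change of variables more transparent, while your version is closer in spirit to the rigorous partial results of Lester and of Milinovich--Turnage-Butterbaugh. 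Neither route is a proof, and you correctly identify the shifted $2k$th moment as the essential missing input.
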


For $1-\frac 1{k-1}<\delta<1-\frac 1k$,  \eqref{def of P_k}  reduces
to the simpler form
\begin{equation}\label{Les}
\mathcal P_k(\delta) = \frac{k^{k^2-1}}{(k^2-1)!}(1-\frac
1k-\delta)^{k^2-1}\;,
\end{equation}
rendering visible the compatibility of Conjecture~\ref{conj:mean
square SI} with Lester's result \eqref{Lester}, which  corresponds
to taking $H=X^\delta$, with $\delta$ in this range.
Note that in \eqref{Lester}, the length of the interval
$h(x)=(\frac{x}{X})^{1-\frac 1k}X^\delta$ varies with $x$, and this slight difference
in conventions is responsible for the factor of $\frac{2^{2-\frac
1k}-1}{2-\frac 1k}$ in \eqref{Lester}, since the mean value of
$h(x)$ over $[X,2X]$ is
\begin{equation}
\frac 1X\int_X^{2X} h(x)dx = \frac{2^{2-\frac 1k}-1}{2-\frac
1k} X^\delta  \;.
\end{equation}

As will be explained later, $\gamma_k(c)$ is a piecewise polynomial function of $c$, satisfying $\gamma_k(c)=\gamma_k(k-c)$, that relates to the asymptotics of a lattice counting problem (Theorem~\ref{lattice_count}).  This lattice counting problem itself emerges from the evaluation of a matrix integral over the unitary group.  We also note that it is possible to write down conjectures for the lower order terms in the asymptotic expansion (\ref{conj:mean square SI}): the right-hand side is, up to terms that are $o(1)$, a polynomial in $\log X$ whose coefficients can be computed.  This is explained in Section~\ref{Conj}.

\subsection{Divisor functions in  $\fq[x]$}
We study the  problem of the sum of   divisor functions
$\divid_k(f)$ over short intervals  for $\fq[x]$.   The   divisor
functions $\divid_k(f)$ for a monic polynomial $f$ are defined in analogy to \eqref{divdef} and give the number
of decompositions $f=f_1 f_2\dots f_k$ with $f_i $ monic. In
particular $\divid_2=\divid$ is the classical divisor function.  

We denote by $\EM_n$ the set of monic polynomials of degree $n$.  A ``short interval" in $\fq[x]$ is a set of the form
\begin{equation}
I(A;h) = \{f:||f-A||\leq q^h\}
\end{equation}
where  $A\in \EM_n$ has degree $n$, $0\leq h\leq n-2$ and
the norm is
\begin{equation}
||f||:=\#\fq[t]/(f) = q^{\deg f}\;.
\end{equation}
The cardinality of such a short interval is
\begin{equation}
\#I(A;h)=q^{h+1}=:H \;.
\end{equation}

Set
\begin{equation}
\EN_{\divid_k}(A;h):=\sum_{f\in I(A;h)} \divid_k(f) \;.
\end{equation}
The mean value is (c.f.~\cite{Andrade}) 
\begin{equation}
\frac 1{q^n}\sum_{A\in \EM_n} \EN_{\divid_k} (A;h) = \frac
{q^{h+1}}{q^n}\sum_{f\in \EM_n} \divid_k(f)
=q^{h+1}\binom{n+k-1}{k-1} \;.
\end{equation}

In analogy with \eqref{def of Delta2} and \eqref{Def of Delta(x:H)} we
set
\begin{equation}\label{Def of Delta_k}
\Delta_k(A;h):=\EN_{\divid_k}(A;h)-q^{h+1}\binom{n+k-1}{k-1} \;.
\end{equation}
We will show below   (Theorem~\ref{thm:sec SI})  that
\begin{equation}
\EN_{\divid_k}(A;h)= q^{h+1}\binom{n+k-1}{k-1}, \quad
h>(1-\frac 1k)n-1
\end{equation}
so that $\Delta_k(A;h)= 0$ vanishes identically for $h>(1-\frac
1k)n-1$. The corresponding range over the integers is $X^{1-\frac
1k} <H<X$, where we have a bound of $O(X^{1-\frac 1k} )$ for the
mean square, see \eqref{ms for large H}.


Our principal result gives the mean square of $\Delta_k(A;h)$
  (which is the variance  of $\EN_{\divid_k}(A;h)$), in the limit $q\to \infty$, in terms of a matrix integral.  Let $U$ be an $N\times N$ matrix. The {\em secular coefficients}
$\Sc_j(U)$ are the coefficients  of the characteristic polynomial of
$U$:
\begin{equation}
\det(I+xU) =\sum_{j=0}^N \Sc_j(U) x^j
\end{equation}
Thus $\Sc_0(U)=1$, $\Sc_1(U) = \tr U$, $\Sc_N(U) = \det U$.  The secular coefficients are the elementary symmetric functions in
the eigenvalues $\lambda_1,\dots,\lambda_N$ of $U$:
\begin{equation}
\Sc_r(U) = \sum_{1\leq i_1<\dots<i_r \leq N} \lambda_{i_1}\cdot
\dots \cdot \lambda_{i_r}
\end{equation}
and give the character of the exterior power representation on
$\wedge^j \C^N$:
\begin{equation}
\Sc_j(U) = \tr \wedge^j(U)
\end{equation}

It is well known that $\wedge^j $ are distinct irreducible
representations of the unitary group $U(N)$, and hence one gets the
mean values
\begin{equation}
\int_{U(N)} \Sc_j(U)dU=0, \quad j=1,\dots, N
\end{equation}
and
\begin{equation}
\int_{U(N)} \Sc_j(U) \overline{\Sc_k(U)} dU = \delta_{j,k},
\end{equation}
where the integrals are with respect to the Haar probability measure

Define the matrix integrals over the group $U(N)$ of $N\times N$
unitary matrices
\begin{equation}\label{def of I}
I_k(m;N):=\int_{U(N)} \Big| \sum_{\substack{j_1+\dots + j_k=m\\
0\leq j_1,\dots, j_k \leq N}}\Sc_{j_1}(U)  \dots
\Sc_{j_k}(U)\Big|^2 dU\;.
\end{equation}
Then the variance
\begin{equation}
\Var(\EN_{\divid_k}):=\frac 1{q^n}\sum_{A\in \EM_n} |\Delta_k(A;h)|^2
\end{equation}
satisfies
\begin{theorem}\label{thm:sec SI}
If  $0\leq h\leq \min(n-5, (1-\frac 1k)n-2)$, then as $q \to \infty$
\begin{equation}
\Var(\EN_{\divid_k}) =H \cdot I_k(n;n-h-2) +O\Big(\frac{H}{\sqrt{q}}
\Big)\;.
\end{equation}
In the remaining cases,
\begin{equation}
\Var(\EN_{\divid_k}) =O\Big(\frac{H}{\sqrt{q}} \Big),\quad h=\lfloor
(1-\frac 1k)n\rfloor-1
\end{equation}
and
\begin{equation}
\Var(\EN_{\divid_k}) =0,\quad    \lfloor (1-\frac 1k)n\rfloor \leq
h\leq n \;.
\end{equation}
\end{theorem}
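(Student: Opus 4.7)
The plan is to convert $\Delta_k(A;h)$ into a sum over non-trivial ``even'' Dirichlet characters modulo $t^{n-h}$, use Weil's Riemann Hypothesis for function-field $L$-functions to realize each character sum as a polynomial in Frobenius eigenvalues $\Theta_\chi\in U(N)$ with $N:=n-h-2$, and then apply Katz's equidistribution theorem to identify the $q\to\infty$ limit with the matrix integral $I_k(n;N)$ of \eqref{def of I}.

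First, the reciprocal-polynomial involution $f\mapsto f^*$, where $f^*(t):=t^n f(1/t)$, identifies $I(A;h)$ bijectively with the arithmetic progression $\{g\in\EM_n:g\equiv A^*\pmod{t^{n-h}}\}$. Expanding the indicator of this progression by orthogonality on $(\fq[t]/t^{n-h})^\times$ and keeping only the ``even'' characters (those trivial on $\fq^\times$, which are the ones that see all of $\EM_n$) rewrites
\begin{equation*}
\Delta_k(A;h) \;=\; \frac{1}{\Phi_{\mathrm{ev}}}\sum_{\substack{\chi\text{ even}\\ \chi\neq\chi_0}} \overline{\chi(A^*)}\,\Psi_k(n,\chi),
\end{equation*}
where $\Psi_k(n,\chi):=\sum_{f\in\EM_n}\divid_k(f)\chi(f)$ and $\Phi_{\mathrm{ev}}$ is the number of even characters. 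For primitive even $\chi$, Weil's theorem gives $L(u,\chi):=\sum_{f\text{ monic}}\chi(f)u^{\deg f}=\det(I-\sqrt{q}\,u\,\Theta_\chi)$, a polynomial of degree $N$ with $\Theta_\chi\in U(N)$; since the generating series of $\divid_k(f)\chi(f)$ factors as $L(u,\chi)^k$, extracting the coefficient of $u^n$ gives
\begin{equation*}
\Psi_k(n,\chi) \;=\; (-\sqrt{q})^n\sum_{\substack{j_1+\cdots+j_k=n\\ 0\le j_i\le N}}\Sc_{j_1}(\Theta_\chi)\cdots\Sc_{j_k}(\Theta_\chi).
\end{equation*}

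Substituting into $\mathrm{Var}(\EN_{\divid_k})=q^{-n}\sum_A|\Delta_k(A;h)|^2$ and collapsing by character orthogonality reduces the variance, up to imprimitive contributions, to the average of $\bigl|\sum_{\vec j}\Sc_{j_1}(\Theta_\chi)\cdots\Sc_{j_k}(\Theta_\chi)\bigr|^2$ over primitive even $\chi$, scaled by $H$ after the $q^n$ from $|\Psi_k|^2$ cancels the normalization. To finish I would invoke Katz's equidistribution theorem: as $q\to\infty$, the family $\{\Theta_\chi\}$ of Frobenius classes attached to primitive even characters modulo $t^{n-h}$ becomes equidistributed in $U(N)$ with respect to Haar measure, with Deligne-style discrepancy $O(1/\sqrt{q})$. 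This converts the character average into $I_k(n;n-h-2)$, producing $H\cdot I_k(n;n-h-2)+O(H/\sqrt{q})$. The vanishing assertions follow from the support condition on $\vec j$: if $kN<n$ the admissible tuple set is empty, so $\Psi_k(n,\chi)=0$ for every $\chi$, and hence $\Delta_k(A;h)\equiv 0$; the boundary case $h=\lfloor(1-\tfrac{1}{k})n\rfloor-1$ yields a degenerate matrix integral together with the same $O(H/\sqrt{q})$ error.

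I expect the main obstacle to be showing that the contribution of imprimitive even characters is absorbed into $O(H/\sqrt{q})$. This requires an induction on the conductor, together with a quantitative form of Katz's theorem applied at each level, and it is what forces the technical bound $h\le n-5$ in the hypothesis. A secondary ingredient is to verify that the geometric monodromy of the ``even primitive'' family is the full unitary group $U(N)$, which ensures that Haar measure is indeed the correct limiting distribution; this is a standard but nontrivial input from Katz's work on Kloosterman and hyper-Kloosterman sheaves.
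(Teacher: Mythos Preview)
Your overall strategy --- involution to arithmetic progressions, expansion in even characters, Weil, then Katz --- is the same as the paper's. But there is a genuine gap that breaks your treatment of the boundary and vanishing cases.

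For a \emph{primitive even} character $\chi$ modulo $t^{n-h}$, the $L$-function is not $\det(I-\sqrt{q}\,u\,\Theta_\chi)$ but rather
\[
L(u,\chi)=(1-u)\det\bigl(I-\sqrt{q}\,u\,\Theta_\chi\bigr),\qquad \Theta_\chi\in U(N),\ N=n-h-2.
\]
The extra factor $(1-u)$ is the ``trivial zero'' coming from evenness. Consequently $L(u,\chi)^k$ has degree $k(N+1)$, not $kN$, and your formula for $\Psi_k(n,\chi)$ is not exact: writing $L(u,\chi)=\sum_{j=0}^{N+1}b_j u^j$ with $b_j=(-1)^jq^{j/2}\Sc_j(\Theta_\chi)+O(q^{(j-1)/2})$ and $|b_{N+1}|\ll q^{N/2}$, one gets for $n\le kN$
\[
\Psi_k(n,\chi)=(-1)^n q^{n/2}\!\!\sum_{\substack{j_1+\cdots+j_k=n\\ 0\le j_i\le N}}\!\!\Sc_{j_1}(\Theta_\chi)\cdots\Sc_{j_k}(\Theta_\chi)\;+\;O\bigl(q^{(n-1)/2}\bigr),
\]
with an unavoidable error term. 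This is exactly what drives the three-case dichotomy: when $kN<n\le k(N+1)$ (i.e. $h=\lfloor(1-\tfrac1k)n\rfloor-1$) every summand has some $j_i=N+1$, so $\Psi_k(n,\chi)=O(q^{(n-1)/2})$, giving $\Var=O(H/\sqrt{q})$; and $\Psi_k(n,\chi)$ vanishes identically only when $n>k(N+1)$, i.e. $h\ge\lfloor(1-\tfrac1k)n\rfloor$. Your claim that ``if $kN<n$ the admissible tuple set is empty, so $\Psi_k(n,\chi)=0$'' is therefore false, and your description of the boundary case as a ``degenerate matrix integral'' misses the mechanism.

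Two smaller points. First, you misidentify the role of the hypothesis $h\le n-5$: it is \emph{not} needed to control imprimitive characters. Imprimitive even characters modulo $t^{n-h}$ number $\ll \Phi_{\mathrm{ev}}(t^{n-h})/q$, and together with the a~priori bound $|\Psi_k(n,\chi)|\ll_n q^{n/2}$ from RH they already contribute $O(H/q)$ --- no induction, no Katz at lower levels. The bound $h\le n-5$ is there solely because Katz's equidistribution theorem for primitive even characters modulo $t^{n-h}$ requires $n-h\ge 5$. Second, the involution reduction is a bit more delicate than you indicate: $f\mapsto f^*$ does not send $\EM_n$ to $\EM_n$ when $f(0)=0$, and handling this (via multiplicativity of $d_k$ at powers of $T$) is what produces in the paper an intermediate double sum $\sum_{m_1,m_2}d_k(T^{n-m_1})d_k(T^{n-m_2})\,\EM(m_1;\chi)\overline{\EM(m_2;\chi)}$, whose off-diagonal terms are then discarded using the same RH bound.
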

\noindent In the case $(1-\frac 1{k-1})n<h+2\leq (1-\frac 1k)n$, the
matrix integral takes a simple form, c.f.~Theorem~\ref{Thm: Ik(m,N) for large m} below.




\subsection{Matrix integrals}
For
$(k-1)N<m<kN$, we obtain a simple formula for the matrix integral:
\begin{theorem}\label{Thm: Ik(m,N) for large m}
For $(k-1)N<m<kN$,
\begin{equation}\label{Ik(m,N) for large m}
I_k(m;N) = \binom{kN-m+k^2-1}{k^2-1}\;.
\end{equation}
\end{theorem}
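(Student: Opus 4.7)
The plan is to expand $\det(I+xU)^k$ in Schur functions via the dual Cauchy identity, apply Weyl's orthogonality on $U(N)$ to collapse the mean square into a sum over partitions, and then use complementation in the $N\times k$ box together with the ordinary Cauchy identity to evaluate the sum in the stated range.

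First I note that since $\Sc_j(U)=0$ for $j>N$, the inner sum in \eqref{def of I} is simply $[x^m]\det(I+xU)^k$. Applying the dual Cauchy identity $\prod_{i,j}(1+x_iy_j)=\sum_\mu s_\mu(x)\,s_{\mu'}(y)$ with $x_i$ the eigenvalues of $U$ and $y_1=\cdots=y_k=x$, and using the homogeneity $s_{\mu'}(x,\ldots,x)=x^{|\mu|}\,s_{\mu'}(1^k)$, gives
\[
\det(I+xU)^k=\sum_{\mu\subset(k^N)} x^{|\mu|}\,s_\mu(U)\,s_{\mu'}(1^k).
\]
Extracting the coefficient of $x^m$ and invoking Weyl's orthogonality $\int_{U(N)}s_\mu(U)\overline{s_\nu(U)}\,dU=\delta_{\mu\nu}$ (valid because $\ell(\mu)\leq N$) yields
\[
I_k(m;N)=\sum_{\substack{\mu\subset(k^N)\\ |\mu|=m}} s_{\mu'}(1^k)^2.
\]

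Next I invoke the hypothesis $(k-1)N<m<kN$, i.e.\ $0<m':=kN-m<N$. Let $\lambda$ be the complement of $\mu$ in the $N\times k$ box, $\lambda_i=k-\mu_{N+1-i}$. A direct calculation gives $\lambda'_j=N-\mu'_{k-j+1}$, so $\lambda'$ is the complement of $\mu'$ in the $k\times N$ box. The standard $GL_k$-duality $V_{\mu'}^*\otimes\det^{\otimes N}\cong V_{\lambda'}$ (adding $N$ to each entry of the reverse of $-\mu'$) then implies
\[
s_{\mu'}(1^k)=s_{\lambda'}(1^k).
\]
Since $|\lambda|=m'<N$, the length restriction $\ell(\lambda)\leq N$ is automatic, and $\lambda\subset(k^N)$ reduces to $\lambda_1\leq k$, i.e.\ $\ell(\lambda')\leq k$. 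Setting $\sigma=\lambda'$, the bijection $\mu\leftrightarrow\sigma$ converts the sum into
\[
I_k(m;N)=\sum_{\substack{\ell(\sigma)\leq k\\ |\sigma|=m'}} s_\sigma(1^k)^2.
\]

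Finally, I specialise Cauchy's identity $\sum_\sigma s_\sigma(x)s_\sigma(y)=\prod_{i,j}(1-x_iy_j)^{-1}$ to $x=(1,\ldots,1)$ and $y=(t,\ldots,t)$ of length $k$, obtaining
\[
\sum_{\ell(\sigma)\leq k} t^{|\sigma|}\,s_\sigma(1^k)^2=\prod_{i,j=1}^k\frac{1}{1-t}=\frac{1}{(1-t)^{k^2}},
\]
and read off the coefficient of $t^{m'}$ to conclude $I_k(m;N)=\binom{m'+k^2-1}{k^2-1}=\binom{kN-m+k^2-1}{k^2-1}$. The step that will require the most care is the middle one: verifying that complementation in the $N\times k$ box induces precisely the $GL_k$-duality between $V_{\mu'}$ and $V_{\lambda'}$, and that the hypothesis $(k-1)N<m<kN$ is exactly what is needed to drop the ``length $\leq N$'' constraint, so that the remaining sum is an $N$-independent $GL_k$ quantity that Cauchy's identity computes in closed form.
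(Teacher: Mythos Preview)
Your argument is correct. Both proofs share the same two-step skeleton --- reduce $(k-1)N<m<kN$ to the complementary parameter $m'=kN-m<N$, then evaluate in closed form --- but the tools differ. The paper proves the reduction as a functional equation $I_k(m;N)=I_k(kN-m;N)$ directly from the identity $\Sc_j(U)=\det(U)\,\overline{\Sc_{N-j}(U)}$, and then evaluates $I_k(m';N)$ for $m'\leq N$ via the Diaconis--Gamburd theorem, which turns the expanded square into a count of $k\times k$ nonnegative integer matrices with entry-sum $m'$. You instead pass immediately to Schur functions via the dual Cauchy identity and Weyl orthogonality, obtaining $I_k(m;N)=\sum_{\mu\subset(k^N),\,|\mu|=m}s_{\mu'}(1^k)^2$; your box-complementation step $s_{\mu'}(1^k)=s_{\lambda'}(1^k)$ is precisely the functional equation recast in representation-theoretic language, and your final Cauchy-identity evaluation replaces the Diaconis--Gamburd matrix count.

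What each approach buys: the paper's route is self-contained from the secular-coefficient viewpoint and needs only the elementary symmetry of $\Sc_j$ plus the quoted combinatorial theorem. Your route is more uniform --- it stays entirely within symmetric-function theory and in fact anticipates the Bump--Gamburd Schur expansion that the paper invokes later (Theorem~\ref{bump_gamburd_thm}) to prove Theorem~\ref{lattice_count}. In particular, your intermediate formula $I_k(m;N)=\sum_{\mu\subset(k^N),\,|\mu|=m}s_{\mu'}(1^k)^2$ holds for \emph{all} $m$, not just the stated range, and is itself a clean Schur-theoretic restatement of the lattice-point count in Theorem~\ref{lattice_count} (the squared dimension $s_{\mu'}(1^k)^2$ being the number of pairs of SSYT of shape $\mu'$ with entries in $\{1,\ldots,k\}$).
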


We are also able to give a closed form, albeit more complicated,
formula for the matrix integral for any range of the parameters, in
terms of a lattice point count:

\begin{theorem}
\label{lattice_count}
$I_k(m;N)$ is equal to the count of lattice points $x= (x_i^{(j)})\in\mathbb{Z}^{k^2}$ satisfying each of the relations
\begin{enumerate}
\item $0 \leq x_i^{(j)} \leq N$ for all $1\leq i,j \leq k$
\item $x_1^{(k)} + x_2^{(k-1)} + \cdots + x_k^{(1)} = kN - m$, and
\item $x\in A_k$, 
\end{enumerate} 
where $A_k$ is the collection of $k \times k$ matrices whose entries satisfy the following system of inequalities,
$$
\begin{matrix}
x_1^{(1)}& \leq & x_1^{(2)}& \leq & \cdots & \leq & x_1^{(k)}\\
\vleq & & \vleq & & & &\vleq \\
x_2^{(1)}& \leq &  x_2^{(2)}& \leq &\cdots & \leq & x_2^{(k)} \\
\vleq & & \vleq & & & &\vleq \\
\vdots& & \vdots & & \ddots & & \vdots \\
\vleq & & \vleq & & & &\vleq \\
x_k^{(1)}& \leq & x_k^{(2)}& \leq & \cdots & \leq & x_k^{(k)}\\
\end{matrix}
$$
\end{theorem}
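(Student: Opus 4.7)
The plan is to use the dual Cauchy identity to expand the sum inside the squared modulus as a linear combination of Schur polynomials, apply Schur orthogonality to reduce $I_k(m;N)$ to a sum of squares of Schur polynomials evaluated at $(1,\ldots,1)$, and then realize this sum combinatorially as a count of pairs of Gelfand--Tsetlin patterns, which I would encode as a single $k\times k$ matrix of integers.

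Since $\Sc_j(U)=e_j(x_1,\ldots,x_N)$ in terms of the eigenvalues $x_1,\ldots,x_N$ of $U$, I have
\[
\sum_{\substack{j_1+\cdots+j_k=m\\ 0\le j_i\le N}}\Sc_{j_1}(U)\cdots\Sc_{j_k}(U)=[t^m]\prod_{i=1}^N(1+x_it)^k.
\]
The dual Cauchy identity $\prod_{i,j}(1+x_iy_j)=\sum_\lambda s_\lambda(x)\,s_{\lambda'}(y)$, evaluated at $y_1=\cdots=y_k=t$ and using the homogeneity of $s_{\lambda'}$, rewrites the right-hand side as $\sum_{|\lambda|=m}s_\lambda(x_1,\ldots,x_N)\,s_{\lambda'}(1^k)$, with only $\lambda$ satisfying $\ell(\lambda)\le N$ and $\lambda_1\le k$ contributing. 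Applying Schur orthogonality $\int_{U(N)}s_\lambda\overline{s_\nu}\,dU=\delta_{\lambda\nu}$ and setting $\mu=\lambda'$ gives
\[
I_k(m;N)=\sum_{\substack{\mu\,:\,|\mu|=m\\ \ell(\mu)\le k,\ \mu_1\le N}}s_\mu(1^k)^2.
\]

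Now $s_\mu(1^k)$ is the number of Gelfand--Tsetlin patterns with top row $(\mu_1,\ldots,\mu_k)$, so each summand counts pairs of such GT patterns sharing the common top row $\mu$. I would encode such a pair as a $k\times k$ integer matrix $Y=(y_i^{(j)})$ with $0\le y_i^{(j)}\le N$, weakly increasing in both indices, and with main-diagonal sum $\sum_i y_i^{(i)}=m$: the main diagonal of $Y$, read from $(k,k)$ down to $(1,1)$, is the shared top row $\mu$, and the two triangular halves hold the two GT patterns via the indexing $\mu_i^{(j)}=y_{j+1-i}^{(k+1-i)}$ for the upper half and $\mu_i^{(j)}=y_{k+1-i}^{(j+1-i)}$ for the lower. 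A systematic check shows that each interlacing inequality $\mu_i^{(j+1)}\ge\mu_i^{(j)}\ge\mu_{i+1}^{(j+1)}$ in either GT pattern translates into exactly one of the row-increase or column-increase inequalities on $Y$, with all $2k(k-1)$ such monotonicity inequalities arising exactly once. The linear substitution $x_i^{(j)}=N-y_i^{(k+1-j)}$ then matches the theorem's conventions: rows of $x$ remain weakly increasing, columns of $x$ become weakly decreasing (the direction indicated by $\vleq$), the range $[0,N]$ is preserved, and the main-diagonal condition becomes the anti-diagonal sum $\sum_i x_i^{(k+1-i)}=kN-m$.

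The principal technical obstacle is the verification of the GT-to-matrix encoding described above: it is immediate from the chosen indexing that interlacings do map to monotonicity relations, but one must check that the correspondence is a bijection on inequalities, with no double-counting or omissions in the boundary cases where the two triangular halves meet along the main diagonal. This amounts to careful bookkeeping in terms of the diagonal-offset parameters, but is the only place in the argument where nontrivial combinatorial content enters.
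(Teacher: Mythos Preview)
Your argument is correct, and the route differs in an instructive way from the paper's. The paper applies the Bump--Gamburd identity directly to the generating function $\int_{U(N)}\det(1+\alpha U)^k\det(1+\beta U^{-1})^k\,dU$, obtaining the single Schur function $\alpha^{kN}s_{\langle N^k\rangle}(\alpha^{-1},\ldots,\alpha^{-1},\beta,\ldots,\beta)$; the coefficient extraction then becomes a count of SSYTs of rectangular shape $\langle N^k\rangle$ with entries in $\{1,\ldots,2k\}$, which are parametrized by the rightmost positions $y_r^{(s)}$ of each value $s$ in row $r$. You instead go through the dual Cauchy identity and Schur orthogonality on $U(N)$ to reach the intermediate identity
\[
I_k(m;N)=\sum_{\substack{|\mu|=m\\ \ell(\mu)\le k,\ \mu_1\le N}} s_\mu(1^k)^2,
\]
and then read $s_\mu(1^k)^2$ as pairs of GT patterns glued along a common top row. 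The two descriptions are in bijection (splitting a rectangular SSYT with alphabet $\{1,\ldots,2k\}$ into its ``$\le k$'' and ``$>k$'' halves yields precisely such a pair of GT patterns), so the final lattice encodings agree. What your approach buys is that it avoids citing Bump--Gamburd and exhibits the clean formula $I_k(m;N)=\sum_\mu s_\mu(1^k)^2$ as a byproduct; what the paper's approach buys is that the parametrization of a single rectangular SSYT by the positions $y_r^{(s)}$ is slightly more concrete than the GT-pair gluing, so the passage to the $k\times k$ array $x$ requires less index bookkeeping.

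Your ``principal technical obstacle''---verifying that the $2k(k-1)$ interlacing inequalities from the two GT patterns correspond bijectively to the $2k(k-1)$ row- and column-monotonicity constraints on $Y$---does check out: with your indexing, the upper pattern's inequalities $\mu_i^{(j+1)}\ge\mu_i^{(j)}$ and $\mu_i^{(j)}\ge\mu_{i+1}^{(j+1)}$ yield exactly the column- and row-increases of $Y$ in the region $r<c$ and $r\le c$ respectively, while the lower pattern covers the complementary ranges $r\ge c$ and $r>c$, with no overlap and no omissions along the diagonal. It would strengthen the write-up to state this partition of the inequality set explicitly rather than leaving it as ``careful bookkeeping''.
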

We note in passing that the above count of lattice points also may be interpreted as a count of plane partitions (see \cite{St}, Section 7.20 for an introduction to the latter).

For the standard divisor function ($k=2$), 
if $h\leq n/2-2$ and $n\geq 5$   we thus find that as $q\to \infty$,
\begin{equation}
\frac 1{q^n}\sum_{A\in \EM_n} |\Delta_2(A;h)|^2 \sim  H \frac{
(n-2h+5) (n-2h+6) ( n-2h+7)}6 \;.
\end{equation}
 This is consistent with  \eqref{Ivic}, which  leads us to expect
a cubic polynomial in $\frac n2-h$.

For the range $(1-\frac 1{k-1})n<h+2<(1-\frac 1k)n$, \eqref{Ik(m,N)
for large m} gives
\begin{equation}\label{intermediate regime}
\begin{split}
\frac 1{q^n}\sum_{A\in \EM_n} |\Delta_k(A;h)|^2 &\sim  H  \binom{
k(n-h-2)-n+k^2-1}{k^2-1} \\
&= HQ_{k^2-1}\Big((1-\frac 1k)n-(h+1)\Big)\;,
\end{split}
\end{equation}
where $Q_{k^2-1}(u)$ is a polynomial of degree $k^2-1$, given by
\begin{equation}\label{int reg asym}
Q_{k^2-1}(u):= \frac{\prod_{j=1}^{k^2-1}( k(u-1)+j) }{\Gamma(k^2)} =
\frac{k^{k^2-1}}{\Gamma(k^2)} u^{k^2-1} + \ldots
\end{equation}
As this range corresponds to $X^{1-\frac{1}{k-1}}<H<X^{1-\frac 1k}$
over the integers, the result \eqref{intermediate regime},
\eqref{int reg asym} is comparable with Lester's result \eqref{Lester} (c.f.~the remark after \eqref{Les}).

We use these results to model the situation over the integers
for the range $H<X^{1-\frac 1{k-1}}$, leading to
Conjecture~\ref{conj:mean square SI}. To do so we derive asymptotics
of $I_k(m;N)$ for $m\approx N$: 
\begin{theorem}\label{thm:Asymp of I}
\label{integral} Let $c := m/N$. Then for $c \in [0,k]$,
\begin{equation}
\label{thm:poly_approx} I_k(m;N) = \gamma_k(c) N^{k^2-1} +
O_k(N^{k^2-2}),
\end{equation}
with $\gamma_k(c) $ defined by \eqref{def of gamma2}.
%
\end{theorem}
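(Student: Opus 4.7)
The strategy is to derive a closed-form expression for $I_k(m;N)$ as a weighted lattice sum of a squared Vandermonde, and then asymptotically approximate it by the continuum integral defining $\gamma_k(c)$.

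The key observation is that $\sum_{j_1+\cdots+j_k=m}\Sc_{j_1}(U)\cdots \Sc_{j_k}(U)$ is the coefficient of $x^m$ in $\det(I+xU)^k = \prod_j(1+xe^{i\theta_j})^k$, where $e^{i\theta_j}$ are the eigenvalues of $U$. The dual Cauchy identity $\prod_{i,j}(1+x_iy_j) = \sum_\lambda s_\lambda(x)\,s_{\lambda'}(y)$ with $y_1=\cdots=y_k=1$, followed by orthonormality of Schur polynomials on $U(N)$, gives
\[
I_k(m;N) = \sum_{\substack{\mu\vdash m\\ \ell(\mu)\leq k,\ \mu_1\leq N}} s_\mu(1^k)^2,
\]
after reindexing by $\mu = \lambda'$. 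I would then apply the Weyl dimension formula $s_\mu(1^k) = \prod_{1\leq i<j\leq k}(\mu_i - \mu_j + j - i)/(j-i)$, recognize $\prod_{i<j}(j-i) = G(1+k)$, and substitute $\ell_i := \mu_i + k - i$. Since $\prod_{i<j}(\ell_i-\ell_j)^2$ vanishes when any two $\ell_i$ coincide, the strict ordering $\ell_1>\cdots>\ell_k$ may be dropped at the cost of a factor $1/k!$, yielding
\[
I_k(m;N) = \frac{1}{k!\,G(1+k)^2}\sum_{\substack{\ell\in\{0,\ldots,N+k-1\}^k\\ \sum_i\ell_i = m + \binom{k}{2}}}\prod_{i<j}(\ell_i-\ell_j)^2.
\]

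Next, I would rescale $w_i := \ell_i/N$. The summand is homogeneous of degree $k(k-1)$ in $\ell$, and the linear constraint reduces the sum to a $(k-1)$-dimensional Riemann sum with step $1/N$ over the polytope $[0,1]^k\cap\{\sum w_i = c\}$. Standard lattice-point and Euler--Maclaurin estimates for polynomial weights on rational polytopes then give
\[
\sum_\ell \prod_{i<j}(\ell_i-\ell_j)^2 = N^{k^2-1}\int_{[0,1]^k}\delta_c\Big(\sum_i w_i\Big)\prod_{i<j}(w_i-w_j)^2\, d^k w + O_k(N^{k^2-2}),
\]
the leading term scaling as $N^{k-1}\cdot N^{k(k-1)} = N^{k^2-1}$ and each boundary codimension costing one factor of $N$. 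Dividing through by $k!\,G(1+k)^2$ identifies the leading term with $\gamma_k(c)\,N^{k^2-1}$, proving the theorem.

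The main technical issue is controlling the error uniformly in $c\in[0,k]$: the polytope has boundary faces where some $w_i\in\{0,1\}$, and near the critical values $c\in\{0,1,\ldots,k\}$ the hyperplane $\sum w_i = c$ passes through vertices of $[0,1]^k$, so the intersection polytope degenerates and the corresponding lattice-point count must be estimated separately. The minor $O(1)$ perturbations from the $\binom{k}{2}$ shift in the sum constraint and from $\ell_i\leq N+k-1$ rather than $\leq N$ affect only subleading terms and do not alter the bound $O_k(N^{k^2-2})$.
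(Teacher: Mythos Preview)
Your argument is correct, and your exact identity
\[
I_k(m;N)=\frac{1}{k!\,G(1+k)^2}\sum_{\substack{\ell\in\{0,\dots,N+k-1\}^k\\ \sum_i\ell_i=m+\binom{k}{2}}}\prod_{i<j}(\ell_i-\ell_j)^2
\]
is a nice closed form in its own right. But the route is genuinely different from the paper's. The paper first proves Theorem~\ref{lattice_count} by applying the Bump--Gamburd identity (Theorem~\ref{bump_gamburd_thm}) to write the generating function as $s_{\langle N^k\rangle}$ at a specialized argument, and then interprets that Schur function combinatorially as a count of SSYT of shape $\langle N^k\rangle$, parametrized by lattice points in a $(k^2-1)$-dimensional Gelfand--Tsetlin-type polytope. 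It then invokes the Davenport--Schmidt lattice/volume principle (Theorem~\ref{lattice_to_volume}) to pass to a $(k^2-1)$-fold integral \eqref{gamma1}, and only afterwards collapses this to the $k$-fold expression \eqref{def of gamma2} via the inductive Vandermonde identity of Lemma~\ref{Vand_Reduce}. Your use of the dual Cauchy identity, Schur orthogonality, and the Weyl dimension formula lands directly on a $(k-1)$-dimensional lattice sum that is already visibly a discretization of \eqref{def of gamma2}, so you never need the Gelfand--Tsetlin polytope or Lemma~\ref{Vand_Reduce}. The trade-off is that the paper's detour also yields Theorem~\ref{lattice_count} and the Ehrhart-polynomial Corollary~\ref{ehrhart} for free, whereas your approach, while shorter for Theorem~\ref{thm:Asymp of I} alone, does not produce those.

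One small point to tighten: the error bound you quote is for a \emph{weighted} lattice sum (polynomial weight $\prod_{i<j}(\ell_i-\ell_j)^2$) on a hyperplane slice of a box, not a pure lattice-point count in a convex body as in Theorem~\ref{lattice_to_volume}. This is still standard (Euler--Maclaurin on rational polytopes, or write the polynomial as a combination of indicator functions of dilated simplices), but it is worth citing a specific result rather than leaving it at ``standard''. Your remarks about the $\binom{k}{2}$ shift and the box endpoint $N+k-1$ versus $N$ contributing only at order $N^{k^2-2}$ are correct.
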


The matrix integral satisfies a functional equation $I_k(m;N) = I_k(kN-m;N)$ (see Lemma~\ref{functional eq}), from which it follows that 
\begin{equation}
\gamma_k(c) = \gamma_k(k-c)\;.
\end{equation}
It follows from an alternative analysis of $I_k(m;N)$ that we also have
\begin{theorem}\label{thm:PP} 
\label{integral analytic}
\begin{equation}
\label{gamma analytic}
\gamma_k(c) =\sum_{0\leq \ell <c}\binom{k}{\ell}^2 (c-\ell)^{(k-\ell)^2+\ell^2-1} g_{k,\ell} (c-\ell)
\end{equation}
where $g_{k,\ell}(c-\ell)$ are (complicated) polynomials in $c-\ell.$ 
\end{theorem}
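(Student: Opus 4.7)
My plan is to prove Theorem~\ref{thm:PP} by an ``alternative analysis'' of the matrix integral $I_k(m;N)$ using a Schur-function expansion, in contrast to the lattice-point analysis behind Theorem~\ref{lattice_count}. First, I expand $\det(I+xU)^k = \prod_{i=1}^N(1+xu_i)^k$ by the dual Cauchy identity (with $k$ auxiliary variables set equal to $x$). Extracting the coefficient of $x^m$ yields
\begin{equation*}
\sum_{j_1+\cdots+j_k=m}\Sc_{j_1}(U)\cdots\Sc_{j_k}(U) = \sum_{\substack{\lambda\subseteq k\times N\\ |\lambda|=m}}(\dim V_\lambda^{GL_k})\,s_{\lambda'}(U),
\end{equation*}
where $\lambda'$ is the conjugate partition and $\dim V_\lambda^{GL_k}$ is the dimension of the irreducible $GL_k$-representation indexed by $\lambda$. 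Orthonormality of the Schur characters $s_{\lambda'}$ under Haar measure then gives
\begin{equation*}
I_k(m;N) = \sum_{\substack{\lambda\subseteq k\times N\\ |\lambda|=m}}(\dim V_\lambda^{GL_k})^2 = \frac{1}{G(k+1)^2}\sum_{\substack{\lambda\subseteq k\times N\\ |\lambda|=m}}\prod_{1\leq i<j\leq k}(\mu_i-\mu_j)^2,
\end{equation*}
where $\mu_i = \lambda_i + k - i$ and I have used Weyl's dimension formula.

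Next I pass to the scaling $N\to\infty$ with $c = m/N$ fixed: setting $\lambda_i = Nw_i$, the sum converges to $\gamma_k(c)N^{k^2-1}$ (consistent with Theorem~\ref{thm:Asymp of I}). To extract the piecewise-polynomial structure in $c$, I apply inclusion-exclusion to the box constraint $w_i\leq 1$: writing $\mathbf{1}_{[0,1]}(w) = \mathbf{1}_{w\geq 0}-\mathbf{1}_{w\geq 1}$ and expanding the product, each subset $S\subseteq[k]$ of size $\ell$ contributes a shifted integral in which, after setting $w_i=1+u_i$ for $i\in S$ and $w_j = v_j$ for $j\notin S$, the squared Vandermonde factors as
\begin{equation*}
\prod_{i<i'\in S}(u_i-u_{i'})^2\prod_{j<j'\in S^c}(v_j-v_{j'})^2\prod_{i\in S,\,j\in S^c}(u_i-v_j+1)^2.
\end{equation*}
By symmetry the $\binom{k}{\ell}$ subsets with $|S|=\ell$ contribute identically. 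The two pure Vandermonde blocks have combined degree $\ell(\ell-1)+(k-\ell)(k-\ell-1)$ in the $u,v$ variables; together with the factor $(c-\ell)^{k-1}$ produced by Dirichlet's beta-integral evaluation of $\int\delta_{c-\ell}(\sum u+\sum v)\prod u_i^{a_i}\prod v_j^{b_j}$ over $[0,\infty)^k$, this gives the leading power
\begin{equation*}
\ell(\ell-1)+(k-\ell)(k-\ell-1)+(k-1) = \ell^2+(k-\ell)^2-1.
\end{equation*}
Expanding the cross-term $\prod_{i,j}(u_i-v_j+1)^2$ yields a polynomial correction of degree $\leq 2\ell(k-\ell)$ in $(c-\ell)$, which is precisely what gets absorbed into $g_{k,\ell}$. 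The restriction $\ell < c$ arises because otherwise the shifted Dirichlet integrand vanishes.

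The main obstacle is to account for the squared binomial coefficient $\binom{k}{\ell}^2$ appearing in the statement, as opposed to the single $\binom{k}{\ell}$ produced by a direct one-sided inclusion-exclusion. One natural source for the second factor is the squared summand $(\dim V_\lambda^{GL_k})^2$, which by RSK counts pairs of semistandard Young tableaux of shape $\lambda$ with entries in $[k]$: an independent stratification on each tableau factor (equivalently, on the row-index and column-index of the $k\times k$ lattice array of Theorem~\ref{lattice_count}) produces the product $\binom{k}{\ell}\cdot\binom{k}{\ell}$. Absent such a canonical decomposition, the extra $\binom{k}{\ell}$ can equally well be absorbed into the complicated polynomial $g_{k,\ell}(c-\ell)$; the assertion of the theorem is the \emph{form} of the decomposition, and existence of the polynomials $g_{k,\ell}$ follows from the explicit Dirichlet-integral evaluation above. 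Verifying the degree bounds and the piecewise-polynomial character on each subinterval $(\ell,\ell+1)$ is then a bookkeeping exercise.
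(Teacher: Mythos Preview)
Your argument is essentially correct, but it follows a genuinely different route from the paper's.

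The paper proves Theorem~\ref{thm:PP} by the complex-analytic method of Section~\ref{asymptotic2}: it represents the generating function $P_k(x)$ via the CFKRS contour integral (Theorem~\ref{complex integral representation}), shrinks each of the $2k$ contours to a small circle about either $0$ or $a=\log x$, and uses a vanishing lemma (Lemma~\ref{lem: zero terms}) to show that only balanced configurations survive. The factor $\binom{k}{\ell}^2$ then arises structurally, from the \emph{two} independent choices---$\ell$ of the first $k$ contours and $k-\ell$ of the second $k$ contours encircling $a$. A rescaling $z_j=\epsilon_j a+v_j/N$ and a Riemann-sum estimate (Lemma~\ref{lem:geom sum}) produce the power $(c-\ell)^{(k-\ell)^2+\ell^2-1}$ and an explicit multiple-residue formula for $g_{k,\ell}$.

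Your route instead re-derives the integral representation \eqref{def of gamma2} for $\gamma_k(c)$ (via the dual Cauchy identity and Weyl's dimension formula, which is tantamount to Theorem~\ref{thm:Asymp of I}) and then applies inclusion--exclusion to the box constraint $\mathbf{1}_{[0,1]}=\mathbf{1}_{\geq 0}-\mathbf{1}_{\geq 1}$ on each $w_i$. This is more elementary and makes the piecewise-polynomial structure transparent, but it leans on Theorem~\ref{thm:Asymp of I} rather than giving an independent derivation, and it naturally produces only a single $\binom{k}{\ell}$ together with a sign $(-1)^\ell$.

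Your hesitation about the squared binomial is not a genuine gap: since the theorem only asserts the \emph{existence} of polynomials $g_{k,\ell}$, you may legitimately absorb the nonzero constant $(-1)^\ell/\binom{k}{\ell}$ into $g_{k,\ell}$ to match the stated form. Your degree count $\ell(\ell-1)+(k-\ell)(k-\ell-1)+(k-1)=\ell^2+(k-\ell)^2-1$ is correct, and the cross-factor $\prod(1+u_i-v_j)^2$ indeed contributes a polynomial in $c-\ell$ of degree $\leq 2\ell(k-\ell)$. You should, however, drop the speculative RSK paragraph invoking a ``second stratification'' to manufacture the extra $\binom{k}{\ell}$; it is not correct as written, and in any case unnecessary.
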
 
\noindent and from this that

\begin{cor}
For a fixed $k$, $\gamma_k(c)$ is a piecewise polynomial function of $c$.  Specifically,  it is a fixed polynomial for $r\leq c<r+1$ ($r$ integer), and each time the value of $c$ passes through an integer it
becomes a different polynomial.
\end{cor}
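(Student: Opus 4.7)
The plan is to read the corollary off directly from the closed form (\ref{gamma analytic}) furnished by Theorem~\ref{thm:PP}. Fix a non-negative integer $r$ and suppose $c\in[r,r+1)$. Then the condition ``$0\le\ell<c$'' selects exactly the indices $\ell=0,1,\dots,r$, and this selection is \emph{constant} as $c$ varies over the interval $[r,r+1)$. Each term in (\ref{gamma analytic}), namely
\[
\binom{k}{\ell}^2(c-\ell)^{(k-\ell)^2+\ell^2-1}\,g_{k,\ell}(c-\ell),
\]
is a polynomial in $c$: the exponent $(k-\ell)^2+\ell^2-1$ is a non-negative integer for all $\ell\in\{0,1,\dots,k\}$, and $g_{k,\ell}$ is polynomial by Theorem~\ref{thm:PP}. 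Hence the whole (finite) sum is a polynomial in $c$ on $[r,r+1)$; call it $P_r(c)$. This yields the first assertion.

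For the second assertion, when $c$ crosses the integer $r+1$ (with $0\le r+1\le k$), the only change to the right-hand side of (\ref{gamma analytic}) is that one additional summand becomes admissible, so
\[
P_{r+1}(c)-P_r(c)=\binom{k}{r+1}^2(c-r-1)^{(k-r-1)^2+(r+1)^2-1}\,g_{k,r+1}(c-r-1).
\]
As long as this difference is not the zero polynomial, $P_{r+1}\ne P_r$, so the piecewise description really does consist of distinct polynomials on adjacent intervals. The binomial factor is nonzero for $0\le r+1\le k$, and outside the range $[0,k]$ the function $\gamma_k(c)$ vanishes identically by the integral representation (\ref{def of gamma2}), consistent with the piecewise structure.

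The main (minor) obstacle is verifying that $g_{k,r+1}$ is not identically zero for $r+1\in\{1,\dots,k-1\}$, which is the only thing preventing an accidental coincidence of $P_r$ and $P_{r+1}$. This must be checked from whatever explicit shape of $g_{k,\ell}$ is extracted during the proof of Theorem~\ref{thm:PP}; a low-degree sanity check can be obtained by noting that the leading behaviour of $\gamma_k$ near $c=k$ is governed by the $\ell=0$ term alone and matches the simple polynomial in (\ref{Les}), so the additional terms that switch on at each integer $r+1<k$ are genuinely present and nontrivial.
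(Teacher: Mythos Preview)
Your argument is correct and is exactly the approach the paper intends: the corollary is stated immediately after Theorem~\ref{thm:PP} with the phrase ``and from this that,'' and no separate proof is given, so the deduction is precisely the one you carry out---freeze the index set $\{0,1,\dots,r\}$ on each interval and observe that each summand is polynomial in $c$. One small remark: at the left endpoint $c=r$ the strict inequality $\ell<c$ technically drops the index $\ell=r$, but since the corresponding summand carries the factor $(c-r)^{(k-r)^2+r^2-1}$ with positive exponent (for $k\ge 2$), it vanishes there anyway, so your description of $P_r$ on the closed--open interval $[r,r+1)$ is still valid. As for the ``different polynomial'' clause, the paper does not verify it either; your identification of the issue (nonvanishing of $g_{k,r+1}$) and the sanity check via the known form \eqref{Les} near the endpoints is already more than the paper supplies.
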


For example,
$$
\gamma_2(c) = \frac {1}{2!} \int_{\substack{0\leq w_1\leq
1\\0\leq c-w_1\leq 1}} (w_1-(c-w_1))^2\,dw_1 = \begin{cases}
\frac{c^3}{3!},& 0\leq c\leq 1\\  & \\ \frac {(2-c)^3}{3!},&1\leq
c\leq 2\end{cases}
$$
and similarly 
\begin{equation*}\label{eq:rod3easy}
\gamma_3(c) =
\begin{cases} \frac 1{8!}c^8,&0<c<1\\
\frac 1{8!}(3-c)^8,&2<c<3
\end{cases}
\end{equation*}
while for $1<c<2$ we get
\begin{equation*}\label{eq:rod3mid}
\gamma_3(c)=\frac 1{8!}\Big(-2 c^8+24 c^7-252 c^6+1512 c^5-4830
c^4+8568 c^3-8484 c^2+4392 c-927\Big)
\end{equation*}


\subsection{Arithmetic progressions}
A similar theory can be developed for sums of divisor functions
along arithmetic progressions,  see \S~\ref{secAP}.

\section{The divisor functions in short intervals}\label{secSI}

Our first goal is to provide proofs for Theorem~\ref{thm:sec SI} and the other
results on sums of $\divid_k$ in short intervals.

\subsection{An expression for the variance}
To begin the proof of Theorem~\ref{thm:sec SI}, we express the
variance of the short interval sums $\EN_{\divid_k}$ in terms of
sums of divisor functions, twisted by primitive even Dirichlet
characters. Recall that a Dirichlet character is {\em even} if
$\chi(cf) = \chi(f)$ for all $c\in \fq^\times$, and is {\em odd}
otherwise. The number of even characters modulo $T^{n-h}$ is
$\Phi(T^{n-h})/(q-1) =q^{n-h-1}$ (see e.g. \cite[\S3.3]{KR}). We
denote by $\Phi_{ev}^*(T^{n-h})=q^{n-h-2}(q-1)$ the number of
primitive even characters modulo $T^{n-h}$.

For a Dirichlet character $\chi$ modulo $T^{n-h}$, set
\begin{equation}\label{def of M(n)}
\EM(n;\divid_k\chi) := \sum_{f\in \mathcal M_n} \divid_k(f)\chi(f)
\;.
\end{equation}

\begin{lemma} 
As $q \to \infty$
\begin{equation}\label{reduce var to M}
\Var(\EN_{\divid_k})    = \frac{H}{q^n}
\frac{1}{\Phi_{ev}^*(T^{n-h})} \sum_{\substack{\chi \bmod
T^{n-h}\\\chi \;{\rm even \; primitive}}} |\EM(n;\divid_k\chi)|^2  +
O\Big(\frac{H}{\sqrt{q}} \Big)
\end{equation}
\end{lemma}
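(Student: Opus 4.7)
The plan is to apply the standard Keating--Rudnick framework, expressing short-interval sums via Dirichlet characters modulo $T^{n-h}$ using the reciprocal involution $f\mapsto f^*(T):=T^n f(1/T)$. For $f,A\in\mathcal M_n$, the condition $f\in I(A;h)$ is equivalent to $f^*\equiv A^*\pmod{T^{n-h}}$. As $A$ varies over $\mathcal M_n$, $A^*\bmod T^{n-h}$ ranges uniformly over the subgroup $S_{n-h}$ of ``even'' residues (those with constant term $1$) of $(\fq[T]/T^{n-h})^\times$, with fibres of size $H$; the character group of $S_{n-h}\cong (\fq[T]/T^{n-h})^\times/\fq^\times$ consists of the $q^{n-h-1}$ even Dirichlet characters modulo $T^{n-h}$. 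Character orthogonality gives the expansion of $\EN_{\divid_k}(A;h)$ as a sum over even $\chi$, whose trivial-character contribution is exactly the mean $q^{h+1}\binom{n+k-1}{k-1}$; squaring $|\Delta_k(A;h)|^2$ and averaging over $A$ (using orthogonality again) yields
\begin{equation*}
\Var(\EN_{\divid_k})=\frac{1}{q^{2(n-h-1)}}\sum_{\substack{\chi\text{ even mod }T^{n-h}\\ \chi\neq\chi_0}}\bigl|\EM'(\chi)\bigr|^2,\qquad \EM'(\chi):=\sum_{f\in\mathcal M_n}\divid_k(f)\chi(f^*).
\end{equation*}

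The next step is to convert $\EM'(\chi)$ into $\EM(n;\divid_k\chi)$. The reciprocal is multiplicative, $(fg)^*=f^*g^*$, and for $f\in\mathcal M_n$ with $f(0)\neq 0$ the map $\tau(f):=f^*/f(0)$ is a $\divid_k$-preserving involution of $\{f\in\mathcal M_n:T\nmid f\}$; since $\chi$ is even, $\chi(f^*)=\chi(\tau(f))$. Splitting the sum over $f$ by $e=v_T(f)$ and writing $f=T^e f_1$ with $T\nmid f_1$ (using $f^*=f_1^*$ and $\divid_k(f)=\binom{e+k-1}{k-1}\divid_k(f_1)$) then produces the convolution identity
\begin{equation*}
\EM'(\chi)=\sum_{m=0}^n\binom{n-m+k-1}{k-1}\EM(m;\divid_k\chi),
\end{equation*}
whose dominant contribution is the $m=n$ term $\EM(n;\divid_k\chi)$.

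Finally, control the off-diagonal and imprimitive contributions using Weil's bound. For every non-trivial primitive even $\chi$ modulo $T^\ell$, the Riemann hypothesis for curves applied to $L(s,\chi)^k$ gives $|\EM(r;\divid_k\chi)|\ll_{k,\ell} q^{r/2}$ uniformly in $r$. This allows us (i) to replace $|\EM'(\chi)|^2$ by $|\EM(n;\divid_k\chi)|^2$: summing the resulting per-character error $O(q^{n-1/2})$ over all $q^{n-h-1}$ even characters and dividing by $q^{2(n-h-1)}$ gives $O(q^{h+1/2})=O(H/\sqrt q)$; and (ii) to drop the imprimitive characters from the sum, since the $O(q^{n-h-2})$ such characters each contribute $O(q^n)$, producing $O(q^h)=O(H/q)$. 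The residual mismatch between $1/q^{2(n-h-1)}$ and $H/(q^n\Phi_{ev}^*(T^{n-h}))$ is the factor $q/(q-1)=1+O(1/q)$, giving a further $O(H/q)$ absorbed into $O(H/\sqrt q)$.

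The principal technical obstacle is the second step: verifying that $\tau$ is $\divid_k$-preserving, which requires combining multiplicativity of the reciprocal with the evenness of $\chi$ to cancel the leading-coefficient factor $f(0)$, and then carefully tracking the contribution of $f$ divisible by $T$ to obtain the binomial convolution relating $\EM'$ to $\EM$.
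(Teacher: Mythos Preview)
Your proposal is correct and follows essentially the same approach as the paper. The paper quotes \cite[Lemma~5.3/5.4]{KRsf} to obtain the variance as
\[
\Var(\EN_{\divid_k})=\frac{H}{q^n}\frac{1}{\Phi_{ev}(T^{n-h})}\sum_{\substack{\chi\neq\chi_0\\ \text{even}}}\Bigl|\sum_{m=0}^n \divid_k(T^{n-m})\,\EM(m;\divid_k\chi)\Bigr|^2,
\]
which is exactly your identity $\Var=\tfrac{1}{q^{2(n-h-1)}}\sum_{\chi}|\EM'(\chi)|^2$ together with your convolution formula for $\EM'(\chi)$ (note $\divid_k(T^{n-m})=\binom{n-m+k-1}{k-1}$ and $H/(q^n\Phi_{ev})=q^{-2(n-h-1)}$); you simply re-derive that cited lemma via the reciprocal involution $f\mapsto f^*$ and the $\divid_k$-preserving map $\tau$. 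Thereafter both arguments are identical: the Weil bound $|\EM(m;\divid_k\chi)|\ll q^{m/2}$ kills the terms with $(m_1,m_2)\neq(n,n)$ at cost $O(H/\sqrt q)$, and a count of imprimitive even characters removes them at cost $O(H/q)$.
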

\begin{proof}
 To compute the variance, we use
\cite[Lemma 5.4]{KRsf}  which gives an expression for the variance
of sums over short intervals of certain arithmetic functions
$\alpha$ which are ``even" ($\alpha(cf)=\alpha(f)$ for $c\in
\fq^\times$), multiplicative, and symmetric under the  map
$f^*(t):=t^{\deg f}f(\frac 1t)$, in the sense that
\begin{equation}\label{inv under*}
\alpha(f^*) = \alpha(f), \quad {\rm if}\;  f(0)\neq 0 \;.
\end{equation}
 Since the divisor functions $\divid_k$ clearly satisfy all
these conditions, we may use  \cite[Lemma 5.3]{KRsf} (compare
\cite[\S4.5]{KR}) to obtain
\begin{multline}\label{var short int d}
\Var(\EN_{\divid_k})    = \frac{H}{q^n} \sum_{m_1,m_2=0}^n
\divid_k(T^{n-m_1}) \overline{\divid_k(T^{n-m_2})} \\
\times \frac{1}{\Phi_{ev}(T^{n-h})} \sum_{\substack{\chi \bmod
T^{n-h}\\\chi\neq \chi_0\, {\rm even}}} \EM(m_1;\divid_k\chi)
 \overline{\EM(m_2;\divid_k\chi)}
\end{multline}

To compute $\EM(n;\divid_k\chi)$, we introduce the generating function
\begin{equation}
\sum_{m=0}^\infty \EM(m;\divid_k\chi)u^m = \sum_{f\; \rm monic}
\chi(f)\divid_k(f)u^{\deg f} = L(u,\chi)^k
\end{equation}
Hence $\EM(n;\divid_k\chi)$ is the coefficient of $u^n$ in
$L(u,\chi)^k$. Now for even $\chi\neq \chi_0$, we write $L(u,\chi) =
(1-u)P(u,\chi)$ and by the Riemann Hypothesis for curves, $P(u,\chi)
= \prod_{j=1}^{n-h-2}(1-\alpha_ju)$ with the inverse zeros
satisfying $|\alpha_j|\leq \sqrt{q}$. Hence we have an a-priori
bound
\begin{equation}\label{apriori bd}
|\EM(m;\divid_k\chi)|\ll_{n,k} q^{m/2}\;.
\end{equation}

Therefore in the sum \eqref{var short int d}, the terms with
$m_1+m_2<2n$ (i.e. $(m_1,m_2)\neq (n,n)$) will contribute
$O(\frac{H}{q^n}q^{n-\frac 12})=O(H/\sqrt{q})$  (the coefficients
$\divid_k(T^{n-m}) = \binom{n-m+k-1}{k-1}$ do not depend on $q$).
Thus
\begin{equation}
\Var(\EN_{\divid_k})    = \frac{H}{q^n}\frac{1}{\Phi_{ev}(T^{n-h})}
\sum_{\substack{\chi \bmod T^{n-h}\\\chi\neq \chi_0\, {\rm even}}}
|\EM(n;\divid_k\chi)|^2  + O\Big(\frac{H}{\sqrt{q}} \Big) \;.
\end{equation}

For the same reason, the non-primitive even characters, whose number
is $\ll \Phi_{ev}(T^{n-h})/q$ (see \cite[\S3.3]{KR}), contribute
$O(H/q)$ to the variance. Thus we are left with
\begin{equation}
\Var(\EN_{\divid_k})    = \frac{H}{q^n}
\frac{1}{\Phi_{ev}^*(T^{n-h})} \sum_{\substack{\chi \bmod
T^{n-h}\\\chi \;{\rm even \; primitive}}} |\EM(n;\divid_k\chi)|^2  +
O\Big(\frac{H}{\sqrt{q}} \Big)
\end{equation}
\end{proof}

\subsection{The sums $\EM(n;\divid_k\chi)$}
We need some information on $\EM(n;\divid_k\chi)$ for $\chi$ even
and primitive. By the Riemann Hypothesis (Weil's theorem), for
$\chi$ even and primitive modulo $T^{n-h}$, we write
\begin{equation}
L(u,\chi) = (1-u)\det(I-uq^{1/2}\Theta_\chi)
\end{equation}
with $\Theta_\chi\in U(n-h-2)$ a unitary matrix of size $n-h-2$.

\begin{lemma}\label{lem:compute M(n)}
For $\chi$ even, primitive modulo $T^{n-h}$:
\begin{itemize}
\item If $n\leq k(n-h-2)$, that is $h\leq (1-\frac 1k)n-2$, then
\begin{equation}
 \EM(n,\chi \divid_k) = (-1)^nq^{n/2}\sum_{\substack{j_1+\ldots+j_k=n\\
j_1,\dots ,j_k\leq n-h-2}} \prod \Sc_{j_i}(\Theta_\chi) +
O(q^{n-\frac 12})
\end{equation}
\item For $k(n-h-2)<n\leq k(n-h-1)$, 
i.e. $h=\lfloor (1-\frac 1k)n\rfloor-1$ we get
\begin{equation}
\EM(n,\divid_k\chi) = O(q^{\frac{n-1}2})
\end{equation}

\item For $n>k(n-h-1)$, that is $h>(1-\frac 1k)n-1$, we get $\EM(n,\divid_k\chi) = 0$.
\end{itemize}
\end{lemma}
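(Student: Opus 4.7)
The plan is to read off $\EM(n;\divid_k\chi)$ as the coefficient of $u^n$ in $L(u,\chi)^k$, using the Riemann Hypothesis factorization established just above the lemma for $\chi$ even and primitive: $L(u,\chi) = (1-u)\det(I - uq^{1/2}\Theta_\chi)$ with $\Theta_\chi \in U(n-h-2)$. Expanding the characteristic polynomial via the definition of the secular coefficients,
\begin{equation*}
\det(I - uq^{1/2}\Theta_\chi)^k = \sum_{m=0}^{k(n-h-2)} (-1)^m q^{m/2}\, S_m(\Theta_\chi)\, u^m,
\end{equation*}
where $S_m(\Theta_\chi) := \sum_{\substack{j_1+\cdots+j_k=m\\ 0\le j_i\le n-h-2}} \prod_i \Sc_{j_i}(\Theta_\chi)$. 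Convolving with $(1-u)^k = \sum_\ell \binom{k}{\ell}(-u)^\ell$ and extracting the coefficient of $u^n$ yields
\begin{equation*}
\EM(n;\divid_k\chi) = (-1)^n \sum_{\ell=0}^{k} \binom{k}{\ell}\, q^{(n-\ell)/2}\, S_{n-\ell}(\Theta_\chi),
\end{equation*}
with the convention that $S_m(\Theta_\chi) = 0$ unless $0 \le m \le k(n-h-2)$. Since $\Theta_\chi$ is unitary, $|\Sc_j(\Theta_\chi)| \le \binom{n-h-2}{j}$ is bounded independently of $q$, hence $S_m(\Theta_\chi) = O_{n,k}(1)$.

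The three cases of the lemma now follow by inspecting which terms in this sum are available. If $n \le k(n-h-2)$, i.e.\ $h \le (1-1/k)n - 2$, then the $\ell = 0$ term is present and gives the stated leading contribution $(-1)^n q^{n/2} S_n(\Theta_\chi)$, while each $\ell \ge 1$ term contributes at most $O(q^{(n-1)/2})$. If $k(n-h-2) < n \le k(n-h-1)$, i.e.\ $h = \lfloor (1-1/k)n \rfloor - 1$, the range constraint forces $\ell \ge n - k(n-h-2) \ge 1$, so the largest surviving power of $q^{1/2}$ is $q^{k(n-h-2)/2} \le q^{(n-1)/2}$. Finally, if $n > k(n-h-1)$, i.e.\ $h > (1-1/k)n - 1$, the polynomial $L(u,\chi)^k = (1-u)^k \det(I-uq^{1/2}\Theta_\chi)^k$ has degree $k(n-h-1) < n$, so the $u^n$ coefficient vanishes identically.

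The argument requires no tool beyond the Weil factorization of $L(u,\chi)$ and the elementary expansion of the characteristic polynomial. The only thing to get right is careful bookkeeping of the admissible ranges for the inner summation indices $j_1, \ldots, j_k$ and the outer convolution index $\ell$ in each of the three regimes of $h$, which is what forces the trichotomy in the statement.
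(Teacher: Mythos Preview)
Your proof is correct and follows essentially the same approach as the paper: expand $L(u,\chi)^k$ via the Weil factorization, extract the coefficient of $u^n$, and use the unitarity bound $|\Sc_j(\Theta_\chi)| = O_{n,k}(1)$ to control the lower-order terms. The only difference is organizational: the paper absorbs the factor $(1-u)$ into the characteristic polynomial first (writing $L(u,\chi) = \sum_j b_j u^j$ with $b_j = a_j - a_{j-1}$) and then takes the $k$th power, whereas you keep $(1-u)^k$ separate and convolve at the end, which makes the source of the error term (the $\ell \ge 1$ summands) slightly more transparent.
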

\begin{proof}
For a primitive even character, the L-function is
\begin{equation}
\begin{split}
L(u,\chi) &= (1-u) \det (I-u\sqrt{q}\Theta_\chi)  \\
&= (1-u)\sum_{j=0}^{n-h-2} (-1)^jq^{j/2} \Sc_{j}(\Theta_\chi)u^j
\end{split}
\end{equation}
To simplify notation in the calculations below, we write
\begin{equation}
N=n-h-2 \;,
\end{equation}
\begin{equation}
a_j = (-1)^jq^{j/2}\Sc_{j}(\Theta_\chi), \quad 0\leq j\leq N, \quad
a_{-1}=0=a_{N+1}\;,
\end{equation}
 and set
\begin{equation}
b_j:=a_j-a_{j-1}, \quad j=0,\dots, N+1
\end{equation}
so that for $\chi$ even, primitive
\begin{equation}
L(u,\chi) = \sum_{j=0}^{N+1} b_j u^j
\end{equation}
and
\begin{equation}
L(u,\chi)^k = \sum_{j_1,\dots, j_k=0}^{N+1} b_{j_1}\cdot  \ldots
\cdot b_{j_k} u^{j_1+\dots +j_k}\;.
\end{equation}
Therefore the coefficient of $u^n$ in the expansion of $L(u,\chi)^k$
for $\chi$ even and primitive is
\begin{equation}
\EM(n,\divid_k\chi) = \sum_{\substack{j_1+\ldots + j_k = n\\0\leq
j_1,\ldots, j_k\leq N+1}}b_{j_1}\cdot  \ldots \cdot b_{j_k}
\end{equation}

Note that
\begin{equation} b_j = a_j + O(q^{\frac{j-1}2}), \quad 0 \leq
j \leq N
\end{equation}
while
\begin{equation}
|b_{N+1}|\ll q^{N/2}\;.
\end{equation}
Hence for an $k$-tuple $(j_1,\ldots,j_k)$ where one of the $j_i=N+1$
we have an upper  bound
\begin{equation}
| b_{j_1} \cdot \ldots \cdot b_{j_k} | \ll q^{\frac{n-1}2}\;.
\end{equation}

Thus if $n>kN$, and $j_1+\dots+j_k=n$, there is at least one index
$i$ so that $j_i=N+1$ and in that case
\begin{equation}
|\EM(n,\divid_k\chi)|\ll q^{\frac{n-1}2}, \quad kN<n\leq k(N+1) \;.
\end{equation}

For $n\leq kN$, there will always be an $k$-tuple of $0\leq
j_1,\ldots,j_k\leq N$ with $j_1+\dots+j_k=n$, and so for $n\leq
kN$
\begin{equation}
\begin{split}
\EM(n,\divid_k\chi)&= \sum_{\substack{j_1+\ldots+j_k=n\\0\leq
j_1,\dots ,j_k\leq N}} b_{j_1}\cdot \ldots \cdot b_{j_k}  +
O\Big(q^{\frac{n-1}2} \Big) \\
&=(-1)^nq^{n/2} \sum_{\substack{j_1+\ldots+j_k=n\\0\leq j_1,\dots
,j_k\leq N}} \Sc_{j_1}(\Theta_\chi)\cdot \ldots \cdot
\Sc_{j_k}(\Theta_\chi) + O\Big(q^{\frac{n-1}2} \Big) \;.
\end{split}
\end{equation}
This concludes the proof.
\end{proof}

\subsection{Proof of Theorem~\ref{thm:sec SI}}
Inserting Lemma~\ref{lem:compute M(n)} into \eqref{reduce var to M}
we find that for $h\leq (1-\frac 1k)n-2$,
\begin{multline}
\Var(\EN_{\divid_k}) =\frac{ H }{\Phi_{ev}^*(T^{n-h})} \sum_{\substack{\chi
\bmod T^{n-h}\\  {\rm even \; primitive}}}
\Big| \sum_{\substack{j_1+\ldots+j_k=n\\
0\leq j_1,\dots ,j_k\leq n-h-2}}   \Sc_{j_1}(\Theta_\chi)\cdot \ldots \cdot  \Sc_{j_k}(\Theta_\chi) \Big|^2 \\
+ O\Big(\frac{H}{\sqrt{q}} \Big)
\end{multline}

We now apply Katz's equidistribution theorem for primitive even
characters modulo $T^N$ \cite{KatzKR2}, which says that the
corresponding Frobenii $\Theta_\chi$ are equidistributed in the
projective unitary group $PU(N-2)$ if $N \geq 5$, to replace the average over
primitive even characters by a matrix integral over $PU(n-h-2)$,
with an error of $O(1/\sqrt{q})$. This gives
\begin{equation}
\Var(\EN_{\divid_k}) =H \cdot I_k(n;n-h-2) +O\Big(\frac{H}{\sqrt{q}}
\Big),\quad  0\leq h\leq \min(n-5, (1-\frac 1k)n-2)
\end{equation}
which proves the main statement of our Theorem.

In the remaining cases, Lemma~\ref{lem:compute M(n)} gives
\begin{equation}
\Var(\EN_{\divid_k}) =O\Big(\frac{H}{\sqrt{q}} \Big),\quad   h=\lfloor (1-\frac
1k)n\rfloor-1
\end{equation}
and
\begin{equation}
\Var(\EN_{\divid_k}) =0,\quad    \lfloor (1-\frac 1k)n\rfloor \leq h\leq n \;.
\end{equation}
This concludes the proof of Theorem~\ref{thm:sec SI}.  \qed

\section{The divisor function in arithmetic progressions}\label{secAP}

\subsection{Arithmetic progressions}
We now turn to   sums of  divisor functions over arithmetic
progressions. Set
\begin{equation}
\ES_{\divid_k}(A)=\ES_{\divid_k;X;Q}(A) = \sum_{\substack{n\leq
X\\n=A\bmod Q}}\divid_k(n)
\end{equation}
For the standard divisor function ($k=2$), it is known that
individually, if $Q<X^{2/3-\epsilon}$ then
\begin{equation}
\ES_{\divid_2}(A) = \frac{X p_Q(\log X)}{\Phi(Q)} + O(X^{1/3+o(1)})
\end{equation}
for some linear polynomial $p_Q$. This is apparently due to Selberg
(unpublished). For recent work on asymptotics of sums of $\divid_3$
over arithmetic progressions, see \cite{FKM} and the literature
cited therein.

The variance $\Var(\ES_{\divid_2;X;Q})$  of $\ES_{\divid_2}$
has been studied by Motohashi \cite{Motohashi},
Blomer \cite{Blomer}, Lau and Zhao \cite{Lau Zhao}, the result being
\cite{Lau Zhao} (we assume $Q$ prime for simplicity):

i) If $1\leq Q<X^{1/2+\epsilon}$ then
\begin{equation}
\Var(\ES_{\divid_2;X;Q}) \ll X^{1/2} + (\frac XQ)^{2/3+\epsilon}
\end{equation}

ii) For $X^{1/2}<Q<X$,
\begin{equation}\label{eq:Lau Zhao}
\Var(\ES_{\divid_2;X;Q}) = \frac XQ p_3(\log \frac{Q^2}X) + O((\frac
XQ)^{5/6}(\log X)^3 )
\end{equation}
where $p_3$ is a polynomial of degree $3$ with positive leading
coefficient. See also the recent papers by Fouvry, Ganguli,
Kowalski, Michel \cite{FGKM} and by Lester and Yesha \cite{LY}
discussing higher moments.

For $k\geq 3$, Kowalski and Ricotta \cite{KowRic} considered smooth
analogues of the divisor sums $\ES_{\divid_k;X;Q}(A)$, and among
other things computed the variance\footnote{The statement of
\cite[Theorem A]{KowRic}, which deals with all moments, includes a
term which is not small for the second moment; however the actual
proof, see \cite[equation 9.8 and below]{KowRic} does give a good
remainder.}
 for $Q^{k-\frac 12+\epsilon}<X<Q^{k-\epsilon}$.


We turn to $\fq[x]$. For $Q\in \fq[x]$ squarefree of degree at least $2$, and
$A$ co-prime to $Q$, set
\begin{equation}
\ES_{\divid_k,n,Q}(A) := \sum_{\substack{f\in \mathcal M_n\\f=A\bmod
Q}} \divid_k(f) \;.
\end{equation}
Our main result here concerns the variance
\begin{equation}
\Var_Q(\ES_{\divid_k,n,Q}):=\frac 1{\Phi(Q)}\sum_{\substack{A\bmod Q\\
\gcd(A,Q)=1}}  \Big| \ES_{\divid_k,n,Q}(A) -\langle \ES_{\divid_k,n,Q}\rangle\Big|^2
\end{equation}
in the range $n\leq k(\deg Q-1)$.
\begin{theorem}\label{thm var div AP}
 If $Q$ is squarefree, and $n\leq k(\deg Q-1)$, then the variance is given by
\begin{equation}
\lim_{q\to \infty} \frac{\Var_Q(\ES_{\divid_k,n,Q})}{  q^n/|Q|} =
I_k(n;\deg Q-1)
\end{equation}
\end{theorem}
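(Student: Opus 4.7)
The strategy mirrors the short interval case (Theorem~\ref{thm:sec SI}). The plan is to Fourier-expand $\ES_{\divid_k,n,Q}(A)$ in Dirichlet characters modulo $Q$, reduce to a second moment of the twisted sums $\EM(n;\divid_k\chi)$, compute those using zeros of $L(u,\chi)$, and finally invoke Katz equidistribution to produce the matrix integral $I_k(n;\deg Q-1)$.

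First, by orthogonality of characters and Parseval's identity on the multiplicative group $(\fq[t]/Q)^\times$,
\begin{equation*}
\Var_Q(\ES_{\divid_k,n,Q}) = \frac{1}{\Phi(Q)^2}\sum_{\chi\neq \chi_0 \bmod Q} |\EM(n;\divid_k\chi)|^2.
\end{equation*}
Since $Q$ is squarefree, a character $\chi\bmod Q$ is primitive iff its restriction to each prime divisor of $Q$ is non-trivial, so $\Phi^*(Q)/\Phi(Q) \to 1$ as $q\to\infty$ (with $\deg Q$ fixed). For a non-primitive $\chi$ induced from a primitive $\chi^*$ modulo a proper divisor $Q'\mid Q$, the $L$-function factorizes and the resulting polynomial in $u$ has degree $\leq \deg Q - 2$; the Riemann Hypothesis for curves then yields $|\EM(n;\divid_k\chi)| \ll q^{n/2}/\sqrt{q}$, so non-primitive characters contribute $O(q^n/(q\Phi(Q)))$ to the normalized variance and are negligible in the limit.

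Next, for each primitive $\chi$ modulo $Q$ (treating the bulk case where $\chi$ is also odd, as even primitive characters are fewer by a factor $1/(q-1)$ and are absorbed into the error), we write $L(u,\chi) = \det(I-u\sqrt{q}\Theta_\chi)$ with $\Theta_\chi \in U(\deg Q-1)$. Expanding
\begin{equation*}
L(u,\chi)^k = \sum_{j_1,\dots,j_k=0}^{\deg Q-1} (-\sqrt{q})^{j_1+\cdots+j_k}\Sc_{j_1}(\Theta_\chi)\cdots \Sc_{j_k}(\Theta_\chi)\, u^{j_1+\cdots+j_k},
\end{equation*}
and extracting the coefficient of $u^n$ (which is possible precisely because $n\leq k(\deg Q-1)$, our hypothesis), gives
\begin{equation*}
\EM(n;\divid_k\chi) = (-1)^n q^{n/2}\sum_{\substack{j_1+\cdots+j_k=n \\ 0\leq j_i\leq \deg Q-1}} \Sc_{j_1}(\Theta_\chi)\cdots \Sc_{j_k}(\Theta_\chi).
\end{equation*}
Substituting back,
\begin{equation*}
\Var_Q(\ES_{\divid_k,n,Q}) = \frac{q^n\,\Phi^*(Q)}{\Phi(Q)^2}\cdot \frac{1}{\Phi^*(Q)}\sum_{\chi \text{ prim.}}\Big|\sum_{j_1+\cdots+j_k=n}\Sc_{j_1}(\Theta_\chi)\cdots \Sc_{j_k}(\Theta_\chi)\Big|^2 + O\!\Big(\frac{q^n}{q\Phi(Q)}\Big).
\end{equation*}

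Finally, we apply Katz's equidistribution theorem for primitive Dirichlet characters modulo a squarefree $Q$ of bounded degree as $q\to\infty$: the conjugacy classes $\Theta_\chi$ become equidistributed in $PU(\deg Q-1)$ (the relevant statement being the analogue of \cite{KatzKR2} for squarefree moduli). Since the integrand $|\sum \prod\Sc_{j_i}|^2$ is a class function invariant under the center, this equidistribution converts the average over characters into the matrix integral $I_k(n;\deg Q-1)$ of \eqref{def of I}, with an error $O(1/\sqrt{q})$. Combining with $\Phi^*(Q)/\Phi(Q)\to 1$ and $\Phi(Q)/|Q|\to 1$ as $q\to\infty$ produces the claimed limit.

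The main obstacle is the appeal to equidistribution of $\Theta_\chi$ for primitive characters modulo a general squarefree $Q$; one needs the analogue of \cite{KatzKR2} in this setting, together with the bookkeeping showing that even primitive and non-primitive characters truly contribute lower-order error. Once this input is in place, the rest is a straightforward adaptation of the short interval argument given above.
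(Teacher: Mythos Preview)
Your overall strategy coincides with the paper's proof: expand in Dirichlet characters, reduce to the mean square of $\EM(n;\divid_k\chi)$ over odd primitive $\chi$, express $\EM$ via secular coefficients of $\Theta_\chi$, and apply Katz equidistribution. Two points require correction.

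First, your argument for discarding non-primitive characters is flawed. You claim that for $\chi$ induced from $\chi^*$ modulo a proper divisor $Q'$, the factorization yields $|\EM(n;\divid_k\chi)|\ll q^{(n-1)/2}$. This is false in general: $L(u,\chi)=L(u,\chi^*)\prod_{P\mid Q,\,P\nmid Q'}(1-\chi^*(P)u^{\deg P})$ still has degree $\deg Q-1$, and while the extra inverse zeros have absolute value $1$, the coefficient of $u^n$ in $L(u,\chi)^k$ can pick up terms using only the inverse zeros of $L(u,\chi^*)$ (of size $\sqrt q$), giving no saving. For instance, with $k=2$, $\deg Q=3$, $\deg Q'=2$, and $n=2$, one has $|\EM(2;\divid_2\chi)|\asymp q$. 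The paper instead uses only the uniform a-priori bound $|\EM(n;\divid_k\chi)|\ll_n q^{n/2}$ (valid for every non-principal $\chi$) together with the \emph{count}: the number of non-primitive characters is $\Phi(Q)-\Phi^*(Q)\ll \Phi(Q)/q$, which you already noted via $\Phi^*(Q)/\Phi(Q)\to 1$. This suffices to get the $O(q^n/(q\Phi(Q)))$ contribution, so your conclusion is right but the justification should be swapped. The even characters are handled the same way, by their count $\Phi(Q)/(q-1)$.

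Second, the equidistribution input is not hypothetical. The reference \cite{KatzKR2} is for primitive even characters modulo $T^N$ (used in the short interval problem); here the relevant theorem is \cite{KatzKR1}, which treats primitive odd Dirichlet characters with squarefree conductor and gives equidistribution of $\Theta_\chi$ in $PU(\deg Q-1)$. This is precisely what the paper invokes, and it removes the ``main obstacle'' you flagged.
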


In particular for the classical divisor function $\divid=\divid_2$,
we get a result consistent with \eqref{eq:Lau Zhao}:
\begin{cor}\label{cor divisors AP}
If  $Q$ is squarefree, $\deg Q\geq 2$ and $n\leq 2(\deg Q-1)$, then
\begin{equation}
\lim_{q\to \infty} \frac{\Var_Q(\ES_{\divid_2,n,Q})}{  q^n/|Q|} =
\begin{cases}
Pol_3(n), & n\leq \deg Q-1 \\ Pol_3(2(\deg Q-1)-n),& \deg Q\leq
n\leq 2(\deg Q-1)
\end{cases}
\end{equation}
where $Pol_3(x) = \binom{x+3}{3} = (x+1)(x+2)(x+3)/6$.
\end{cor}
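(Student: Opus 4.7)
The plan is to combine Theorem~\ref{thm var div AP} with the explicit evaluations of the matrix integral $I_k(m;N)$ available for $k=2$, then match the two cases in the corollary to the two cases produced by the functional equation.

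First I would set $N := \deg Q - 1$ and apply Theorem~\ref{thm var div AP}, which immediately gives
\begin{equation*}
\lim_{q\to\infty} \frac{\Var_Q(\ES_{\divid_2,n,Q})}{q^n/|Q|} = I_2(n; N)
\end{equation*}
for the entire range $n\leq 2N$. Thus the corollary reduces to showing $I_2(n;N) = \binom{n+3}{3}$ when $0\leq n\leq N$, and $I_2(n;N) = \binom{2N-n+3}{3}$ when $N\leq n\leq 2N$.

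The upper subrange is the easy one: for $N < n < 2N$, which in the $k=2$ case is precisely the range $(k-1)N < n < kN$ of Theorem~\ref{Thm: Ik(m,N) for large m}, specializing the formula $I_k(m;N) = \binom{kN-m+k^2-1}{k^2-1}$ to $k=2$ yields $I_2(n;N)=\binom{2N-n+3}{3}$, which is exactly $Pol_3(2(\deg Q-1)-n)$. The boundary case $n = 2N$ gives $I_2(2N;N) = \binom{3}{3} = 1$, consistent with $\Sc_N(U)\Sc_N(U) = |\det U|^2 = 1$.

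For the lower subrange $0 \leq n \leq N-1$ I would invoke the functional equation $I_k(m;N) = I_k(kN-m;N)$ recorded in Lemma~\ref{functional eq}, which for $k=2$ reads $I_2(n;N) = I_2(2N-n;N)$. Since $2N-n$ now lies in the interval $(N,2N]$ already handled above, the previous formula applies and gives $I_2(n;N) = \binom{2N-(2N-n)+3}{3} = \binom{n+3}{3} = Pol_3(n)$. The two formulas agree at the overlap $n=N$ (both equal $\binom{N+3}{3}$), so the piecewise description of the corollary holds on all of $0\leq n\leq 2N$. There is no real obstacle here beyond keeping track of parameters; the deep inputs have already been established in Theorems~\ref{thm var div AP}, \ref{Thm: Ik(m,N) for large m} and the functional equation, and the argument is essentially a specialization to $k=2$ together with one application of the symmetry $m \leftrightarrow kN-m$.
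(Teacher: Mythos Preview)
Your approach is correct and matches the paper's intended derivation: apply Theorem~\ref{thm var div AP} to reduce to $I_2(n;N)$ with $N=\deg Q-1$, then read off the explicit values of $I_2$. One small gap: the endpoint $n=N$ is not actually covered by what you cite, since Theorem~\ref{Thm: Ik(m,N) for large m} is stated with strict inequalities $(k-1)N<m<kN$, and the functional equation at $n=N$ is the tautology $I_2(N;N)=I_2(N;N)$. The cleanest fix is to cite \eqref{eval I first range} directly, which gives $I_k(m;N)=\binom{m+k^2-1}{k^2-1}$ for all $m\le N$; this both handles $n=N$ and makes the functional-equation detour for the lower range unnecessary.
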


As in the short interval case, we are led to a conjecture on the
asymptotics of the variance over the integers. For simplicity, we
stick with the case that the the modulus $Q$ is prime: 
\begin{conj}\label{conj:AP}
For $Q$ prime, $Q^{1+\epsilon}<X<Q^{k-\epsilon}$, as $X\to \infty$,
$$
\Var(\ES_{\divid_k;X;Q})\sim \frac XQ a_k \gamma_k(\frac{\log
X}{\log Q})(\log Q)^{k^2-1}
$$
where $a_k$ is given by \eqref{Lester const} and $\gamma_k(c)$ is
given by \eqref{def of gamma2}.
\end{conj}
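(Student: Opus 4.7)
The plan is to establish the conjecture heuristically by transporting the function field argument of Theorem~\ref{thm var div AP} to the integers, invoking the Conrey--Farmer--Keating--Rubinstein--Snaith (CFKRS) recipe for joint moments of Dirichlet $L$-functions in place of Katz's equidistribution theorem, and then applying Theorem~\ref{thm:Asymp of I} to extract the leading constant $\gamma_k$.

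First, since $Q$ is prime, every non-principal character mod $Q$ is primitive, and orthogonality gives
\begin{equation*}
\Var(\ES_{\divid_k;X;Q}) = \frac{1}{\Phi(Q)^2}\sum_{\chi\neq \chi_0} \bigl|M_k(X,\chi)\bigr|^2, \qquad M_k(X,\chi):=\sum_{n\leq X}\divid_k(n)\chi(n),
\end{equation*}
so the problem reduces to averaging $|M_k(X,\chi)|^2$ over non-principal $\chi$. I would express $M_k(X,\chi)$ by Perron's formula as a contour integral of $L(s,\chi)^k X^s/s$ and shift to the critical line (no pole appears since $\chi\neq \chi_0$), obtaining
\begin{equation*}
M_k(X,\chi) \approx \frac{1}{2\pi}\int_{|t|\leq T} L(\tfrac12+it,\chi)^k \frac{X^{1/2+it}}{\tfrac12+it}\,dt.
\end{equation*}
This is the integer analogue of the function field step in which $\EM(n,\divid_k\chi)$ was read off as the $u^n$-coefficient of $L(u,\chi)^k$ in Lemma~\ref{lem:compute M(n)}.

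Next I would invoke the CFKRS recipe for the family of Dirichlet characters mod $Q$, which has unitary symmetry in the Katz--Sarnak classification. The recipe predicts that the shifted joint moment
\begin{equation*}
\frac{1}{\Phi(Q)}\sum_{\chi\neq \chi_0} L(\tfrac12+\alpha,\chi)^k\,\overline{L(\tfrac12+\beta,\chi)^k}
\end{equation*}
factors asymptotically as the arithmetic factor $a_k$ of \eqref{Lester const} times a random-matrix expectation over $U(\log Q)$. Substituting this into the double integral produced by $|M_k(X,\chi)|^2$ and carrying out the $\alpha,\beta$ integrations, the oscillatory factors $X^{i(t_1-t_2)}$ impose a constraint on the integration variables which is the continuous counterpart of $j_1+\cdots+j_k=n$ from Lemma~\ref{lem:compute M(n)}. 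Setting $N=\log Q$ and $c = \log X/\log Q$, the random-matrix contribution assembles into $I_k(m;N)$ with $m=cN$; Theorem~\ref{thm:Asymp of I} then gives $I_k(m;N)\sim \gamma_k(c)(\log Q)^{k^2-1}$. Combined with $a_k$ and the mean density $X/Q$, this reproduces the conjectured asymptotic.

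The main obstacle is that the CFKRS recipe is itself conjectural for $k\geq 3$, and I would need it in a form uniform in $Q$ and in the shifts $\alpha,\beta$ across the entire range $Q^{1+\epsilon}<X<Q^{k-\epsilon}$. A secondary combinatorial obstacle is matching the recipe's integral exactly to $I_k(m;N)$, rather than to some other asymptotically equivalent quantity; the lattice-point description of Theorem~\ref{lattice_count} and the explicit form of $\gamma_k$ in Theorem~\ref{thm:PP} should be the essential bookkeeping tools. Consistency with \eqref{eq:Lau Zhao} at $k=2$ (which also matches the function field Corollary~\ref{cor divisors AP}) and with Kowalski--Ricotta's asymptotic in the range $Q^{k-1/2+\epsilon}<X<Q^{k-\epsilon}$ serve as partial sanity checks.
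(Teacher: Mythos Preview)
The statement is a conjecture, and the paper offers no proof or even a dedicated heuristic derivation for it; its sole justification in the text is the function field analogy with Theorem~\ref{thm var div AP} together with the asymptotic Theorem~\ref{thm:Asymp of I}. So there is no ``paper's own proof'' to match against, and your proposal should be read as an independent heuristic.

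That said, your approach is the natural arithmetic-progression analogue of what the paper does in Section~\ref{Conj} for the short interval Conjecture~\ref{conj:mean square SI}. There the paper works with shifted moments of $\zeta$ and invokes K\"osters' conjecture \eqref{zetaRMTconstant} to pass to the random matrix integral; you instead express the variance via orthogonality of characters, use Perron's formula for $M_k(X,\chi)$, and replace K\"osters' conjecture by the CFKRS recipe for the unitary family $\{L(s,\chi)\}_{\chi \bmod Q}$. This is the correct dictionary: the Frobenii $\Theta_\chi$ modelled by $U(\deg Q-1)$ in the function field proof correspond to modelling the low-lying zeros of $L(s,\chi)$ by $U(N)$ with $N\approx \log Q$, and the condition $j_1+\cdots+j_k=n$ becomes the constraint imposed by the $X^{it}$ oscillation. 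Your identification of $c=\log X/\log Q$ and the appeal to Theorem~\ref{thm:Asymp of I} to produce $\gamma_k(c)(\log Q)^{k^2-1}$ is exactly right, and the appearance of $a_k$ from the arithmetic factor in CFKRS is also correct.

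Your caveats are honest and accurate: the argument is conditional on CFKRS (or an equivalent shifted-moment conjecture) uniformly in the shifts, and the matching of the recipe's output to $I_k(m;N)$ requires the same bookkeeping as in Section~\ref{asymptotic2}. As a heuristic justification your outline is sound and in fact goes somewhat further than the paper itself does for this particular conjecture.
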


\subsection{Proof of Theorem~\ref{thm var div AP}}\label{sec:div AP}
We start with  the following expansion, using the orthogonality
relation for Dirichlet characters to pick out an arithmetic
progression \cite[\S4.1]{KRsf}:
\begin{equation}\label{expand ES}
\ES_{\divid_k,n,Q}(A) = \frac 1{\Phi(Q)} \sum_{\substack{f\in \mathcal M_n\\
 (f,Q)=1}} \divid_k(f)  + \frac{1}{\Phi(Q)} \sum_{\chi\neq
\chi_0}\overline{\chi(A)} \EM(n;\divid_k\chi)
\end{equation}
where $\EM(n;\divid_k\chi)$, given by \eqref{def of M(n)},
is the coefficient of $u^n$ in the expansion of $L(u,\chi)^k$.
Since $L(u,\chi)$ is a polynomial of degree $\leq \deg Q-1$ for
$\chi\neq \chi_0$, we see that $\ES_{\divid_k,n,Q}$ is independent of $A$ for $n>k(\deg Q-1)$:
\begin{equation} \ES_{\divid_k,n,Q}(A) =
\frac 1{\Phi(Q)} \sum_{\substack{f\in \mathcal M_n\\
 (f,Q)=1}} \divid_k(f)\sim \frac{q^n\binom{n+k-1}{k-1}}{\Phi(Q)}  \;.
 \end{equation}
Thus for any $n$, the mean value (averaging over $A$ coprime to $Q$)
is
\begin{equation}
\ave{\ES_{\divid_k,n,Q}}  \sim \frac{q^n\binom{n+k-1}{k-1}}{\Phi(Q)} \;.
 \end{equation}

The interesting range is $n\leq k(\deg Q-1)$, which we assume from
now on. To compute the variance, we use \eqref{expand ES} and the
orthogonality relations for Dirichlet characters as in \cite{KR,
KRsf} to find
\begin{equation}
\Var(\ES_{\divid_k,n,Q}) = \frac 1{\Phi(Q)^2} \sum_{\chi\neq \chi_0}
|\EM(n;\divid_k\chi)|^2 \;.
\end{equation}

We first dispose of the contribution of even characters, whose
number is $\Phi_{\rm ev}(Q) = \Phi(Q)/(q-1)$:
As in \eqref{apriori bd},  we have an a-priori bound for $\chi\neq
\chi_0$
\begin{equation}
|\EM(n,\divid_k \chi)|\ll_n q^{n/2} \;.
\end{equation}
Therefore the even characters  contribute at most
\begin{equation}
\ll_n \frac 1{\Phi(Q)^2} \Phi_{\rm ev}(Q) q^{n} \ll \frac 1q
\frac{q^n}{\Phi(Q)}\;,
\end{equation}
which is negligible relative to the main term that we find which is
of order $q^n/\Phi(Q)$. The same argument bounds the contribution of
odd non-primitive characters if $Q$ is non-prime.  Thus
\begin{equation}\label{var intermed odd}
\Var(\ES_{\divid_k,n,Q}) = \frac 1{\Phi(Q)^2} \sum_{\substack{\chi\;
\rm odd\; and \; primitive}}|\EM(n;\divid_k\chi)|^2  + O\Big(\frac 1q
\cdot \frac{q^n}{\Phi(Q)}\Big) \;.
\end{equation}

To handle the odd primitive characters $\chi$, we use the Riemann
Hypothesis (Weil's theorem) to write
\begin{equation}
L(u,\chi) = \det (I-uq^{1/2}\Theta_\chi)\;,
\end{equation}
with the unitarized Frobenius $\Theta_\chi\in  U(\deg Q-1)$.
Hence for $2\leq n\leq k(\deg Q-1)$,
\begin{equation}\label{formula for Mdiv}
\EM(n;\divid_k\chi) =
(-1)^nq^{n/2}\sum_{\substack{j_1+\ldots+j_k=n\\ 0\leq j_1,\ldots,j_k
\leq \deg Q-1}} \Sc_{j_1} (\Theta_\chi )\cdot \ldots \cdot\Sc_{j_k} (\Theta_\chi) \;.
\end{equation}

Inserting \eqref{formula for Mdiv} into \eqref{var intermed odd} and
using \eqref{formula for Mdiv} and Katz's equidistribution theorem
\cite{KatzKR1} (here we require $Q$ squarefree) we  get for $\deg Q\geq 2$ and $2\leq n\leq k(\deg Q-1)$
\begin{equation}
\begin{split}
\lim_{q\to \infty} \frac{\Var(\ES_{\divid_k,n,Q})}{  q^n/|Q|} &=
\int_{U(\deg Q-1)} \Big| \sum_{\substack{j_1+\ldots +j_k=n\\0\leq
j_1,\ldots,j_k\leq \deg Q-1}}
 \Sc_{j_1}(U )\dots \Sc_{j_k}(U )\Big|^2 dU
\\&=I_k(n,\deg Q-1)\;,
\end{split}
\end{equation}
proving Theorem~\ref{thm var div AP}.

Note that If $n<\deg Q$, then we of course do not need these
powerful equidistribution results, since
there is at most  {\em one} $f$ with $\deg f=n$ and $f=A\bmod Q$,
which allows one to obtain the claim in an elementary manner.


\section{Matrix integral}\label{secRMT}

Our goal in this section is to evaluate the matrix integral \eqref{def of I}. We start by looking at the following products:
\begin{equation}
\begin{split}
\det (I-xU)^{k}\det (I-yU^{*})^{k}=
(\sum_{j=1}^{N}Sc_{j}(U)(-x)^{j})^{k}(\sum_{i=1}^{N}Sc_{i}(U^{*})(-y)^{i})^{k}
\end{split}
\end{equation}
We will be interested in the expected value over the unitary group of the above. Due to the invariance of Haar measure of $U(N)$ under multiplication by unit scalars, we are left with only the diagonal terms, i.e.
\begin{equation}\label{generating}
\int_{U(N)}\det (I-xU)^{k}\det (I-yU^{*})^{k}dU=\sum_{0\leq m\leq kN}I_{k}(m,N)(xy)^{m}
\end{equation}
This integral therefore serves as a generating series for the function $I_k(m;N)$.
Note that we may switch the sign of both $x$ and $y$ and retain the same right hand side.


\subsection{Evaluation in a certain range}\label{sec:RMT}
We now give the proof of Theorem~\ref{Thm: Ik(m,N) for large m}.
For the range $m\leq N$, we will apply the method of Diaconis-Gamburd \cite{DG}
to obtain
\begin{equation}\label{eval I first range}
I_k(m;N) = \binom{m+k^2-1}{k^2-1}, \quad m\leq N
\end{equation}
When $(k-1)N\leq m\leq kN$ we have a functional equation which allows us to compute the integral in this range.

\subsubsection{The functional equation}
\begin{lemma}\label{functional eq}
For $0 \leq m \leq kN$, the following functional equation holds,
\begin{equation}\label{Isim}
I_k(m;N) = I_k(kN-m;N).
\end{equation}
\end{lemma}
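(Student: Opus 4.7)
The plan is to exploit a well-known symmetry of the secular coefficients of a unitary matrix. Specifically, if $U \in U(N)$ has eigenvalues $\lambda_1,\dots,\lambda_N$ (all of modulus $1$), then for the elementary symmetric polynomials one has
\begin{equation*}
e_{N-j}(\lambda_1,\dots,\lambda_N) = \lambda_1\cdots\lambda_N \cdot e_j(1/\lambda_1,\dots,1/\lambda_N) = \det(U)\,\overline{e_j(\lambda_1,\dots,\lambda_N)},
\end{equation*}
so that $\Sc_{N-j}(U) = \det(U)\,\overline{\Sc_j(U)}$. The intuition is that replacing each index $j_i$ by $N - j_i$ turns a tuple summing to $m$ into a tuple summing to $kN - m$, while the effect on the integrand is only multiplication by $\det(U)^k$ and complex conjugation — both of which are invisible after taking $|\cdot|^2$.

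The key step is then a change of summation variables. Starting from the definition
\begin{equation*}
I_k(m;N) = \int_{U(N)} \bigg| \sum_{\substack{j_1+\cdots+j_k = m \\ 0 \leq j_i \leq N}} \Sc_{j_1}(U)\cdots\Sc_{j_k}(U) \bigg|^2 dU,
\end{equation*}
I set $j_i' := N - j_i$; the constraints $0 \leq j_i \leq N$ are preserved and the condition $j_1+\cdots+j_k = m$ becomes $j_1'+\cdots+j_k' = kN - m$. Applying $\Sc_{j_i}(U) = \det(U)\,\overline{\Sc_{j_i'}(U)}$ inside the product gives
\begin{equation*}
\Sc_{j_1}(U)\cdots\Sc_{j_k}(U) = \det(U)^k\,\overline{\Sc_{j_1'}(U)\cdots\Sc_{j_k'}(U)}.
\end{equation*}
Since $|\det(U)|=1$ for $U\in U(N)$, and since $|\overline{z}|^2 = |z|^2$, the modulus squared of the reindexed sum equals the modulus squared of the sum defining $I_k(kN-m;N)$, and the two integrals coincide.

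I do not expect any real obstacle here: the argument is a purely algebraic symmetry of Haar-almost-every unitary matrix combined with a relabeling of the summation indices, and the potential pitfalls (that $\det(U)^k$ pulls out of the sum as a common factor, and that it has modulus one) are exactly what makes the identity work cleanly. An alternative — essentially equivalent — route would be to manipulate the generating identity \eqref{generating} directly using $\det(I - xU) = (-x)^N \det(U)\det(I - \tfrac{1}{x}U^*)$ to show that its left-hand side is invariant under $(xy)^m \leftrightarrow (xy)^{kN-m}$; I would mention this as a second proof if space permits.
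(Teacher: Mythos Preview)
Your proof is correct and is essentially the same as the paper's: both use the identity $\Sc_j(U)=\det(U)\,\overline{\Sc_{N-j}(U)}$ (derived in the paper from the functional equation of the characteristic polynomial, and by you from the elementary symmetric polynomial relation), then perform the change of variables $j_i\mapsto N-j_i$ and observe that $|\det(U)^k|=1$ so the squared modulus is unchanged. Your suggested alternative via the generating identity \eqref{generating} is also valid and amounts to the same symmetry.
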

\begin{proof} \label{convert to general $k$}
We use the functional equation of  the characteristic polynomial of
a unitary matrix
\begin{equation}
\det(I+xU) = x^N \det(U)\det(I+x^{-1} U^{*})
\end{equation}
which implies that
\begin{equation}
\Sc_j(U) = \det(U) \Sc_{N-j}(U^{*}) = \det(U)
\overline{\Sc_{N-j}(U)}
\end{equation}
Therefore
\begin{equation*}
 \Sc_{j_{1}}(U) \cdots\Sc_{j_{k}}(U) \overline{\Sc_{l_{1}}(U)\cdots \Sc_{l_{k}}(U)}  =
\Sc_{N-l_{1}}(U) \cdots\Sc_{N-l_{k}}(U) \overline{\Sc_{N-j_{1}}(U)\cdots \Sc_{N-j_{k}}(U)}.
\end{equation*}
We change variables
\begin{equation}
m'=kN-m, \quad j_{i}'=N-j_{i},\quad l_{i}'=N-l_{i}
\end{equation}
and so obtain
\begin{equation}
\left| \sum_{\substack{j_1+\dots j_k=m\\
0\leq j_1,\dots, j_k \leq N}} \Sc_{j_1}(U) \dots \Sc_{j_k}(U)
\right|^2 = \left| \sum_{\substack{j'_1+\dots j'_k=m'\\
0\leq j'_1,\dots, j'_k \leq N}} \Sc_{j'_1}(U) \dots \Sc_{j'_k}(U)
\right|^2
\end{equation}
which implies \eqref{Isim}.
\end{proof}

\subsubsection{Review of Diaconis and Gamburd \cite{DG}}\label{dg}

Let $A=(a_{i,j})$ be an $m\times n$ matrix with non-negative integer
entries. Let $r_i = \sum_j a_{i,j}$ be the sum of the entries in the
$i$-th row, and $c_j = \sum_i a_{i,j}$ be the sum of the entries in
the $j$-th column. Set
\begin{equation}
\mbox{row}(A) = (r_1,\dots, r_m), \quad \mbox{col}(A) = (c_1,\dots,
c_n)
\end{equation}

Let $\lambda = (\lambda_1,\dots, \lambda_r) \in \mathbb N^r$ with
$\lambda_1\geq \lambda_2\geq \dots \geq \lambda_r$ be a partition of
$n$, so $n=\sum_i \lambda_i$. Denote by $m_i=m_i(\lambda)$ the
number of part of $\lambda$ equal to $i$, so an alternative notation
is
\begin{equation}
\lambda= \langle 1^{m_1} 2^{m_2} \dots \rangle
\end{equation}
Given two partitions $\mu=(\mu_1,\dots, \mu_m)$ and $\tilde \mu =
(\tilde \mu_1,\dots, \tilde \mu_n)$, denote by $N_{\mu,\tilde \mu}$
the number of $m\times n$ matrices $A$ with non-negative integer
entries so that $\mbox{row}(A)=\mu$ and $\mbox{col}(A)=\tilde \mu$.
For instance if $\mu  = (2,1,1) = \langle  1^2 2^1 \rangle$ and
$\tilde \mu =(3,1) = \langle 1^1 3^1 \rangle$ then $N_{\mu,\tilde
\mu} = 3$ with the corresponding matrices $A$ being
$$ \begin{pmatrix} 2& 0\\1&0\\0&1\end{pmatrix},\quad
 \begin{pmatrix} 2& 0\\0&1\\1&0\end{pmatrix},\quad
 \begin{pmatrix} 1& 1\\1&0\\1&0\end{pmatrix}
$$


We quote a result of Diaconis and Gamburd:
\begin{theorem}\cite{DG}\label{thm:DG}
 Let   $a_i,b_j$ be non-negative integers, $1\leq i,j \leq \ell$.
Then for $  \max(\sum_{j=1}^\ell ja_j,\sum_{j=1}^\ell jb_j)\leq N$,
\begin{equation}\label{eq:DGb}
\int_{U(N)} \prod_{j=1}^\ell (\Sc_j(U))^{a_j}
\overline{\Sc_j(U)}^{b_j} dU = N_{\mu,\tilde \mu}
\end{equation}
where $\mu = \langle 1^{a_1} 2^{a_2}\dots \ell^{a_\ell}\rangle$,
$\tilde \mu = \langle 1^{b_1} 2^{b_2}\dots \ell^{b_\ell}\rangle$.
\end{theorem}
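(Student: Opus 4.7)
The strategy is to expand the integrand in the Schur basis, apply Weyl's orthogonality of $U(N)$ characters, and then invoke the Robinson--Schensted--Knuth (RSK) correspondence. Writing $\Sc_j(U)=e_j(z_1,\ldots,z_N)$ as the $j$-th elementary symmetric function in the eigenvalues $z_1,\ldots,z_N$ of $U$, the integrand equals $e_\mu\cdot\overline{e_{\tilde\mu}}$, where $\mu=\langle 1^{a_1}\cdots\ell^{a_\ell}\rangle$ and $\tilde\mu=\langle 1^{b_1}\cdots\ell^{b_\ell}\rangle$ are the indicated partitions of $|\mu|=\sum_j ja_j$ and $|\tilde\mu|=\sum_j jb_j$ respectively.

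The first main step would be the Schur expansion. Iterating Pieri's rule (since $e_k=s_{1^k}$, multiplication by $e_k$ appends a vertical strip of length $k$) gives
\[
e_\mu=\sum_{\lambda\vdash|\mu|}K_{\lambda'\mu}\,s_\lambda,
\]
where $K_{\lambda'\mu}$ is the Kostka number counting semistandard Young tableaux of shape $\lambda'$ (the conjugate partition) and content $\mu$; similarly for $e_{\tilde\mu}$. Next I would apply the orthogonality relation $\int_{U(N)}s_\lambda(U)\overline{s_{\tilde\lambda}(U)}\,dU=\delta_{\lambda,\tilde\lambda}$, valid for partitions with $\ell(\lambda),\ell(\tilde\lambda)\leq N$ (Weyl orthogonality of irreducible polynomial characters of $U(N)$). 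Substituting the two Schur expansions and reindexing $\nu=\lambda'$ yields
\[
\int_{U(N)} e_\mu\,\overline{e_{\tilde\mu}}\,dU=\sum_\nu K_{\nu\mu}\,K_{\nu\tilde\mu}.
\]

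Finally, I would invoke RSK, which bijects non-negative integer matrices with row sums $\mu$ and column sums $\tilde\mu$ with pairs $(P,Q)$ of semistandard Young tableaux of common shape $\nu$, with contents $\mu$ and $\tilde\mu$ respectively. This identifies $\sum_\nu K_{\nu\mu}K_{\nu\tilde\mu}=N_{\mu,\tilde\mu}$ and completes the proof. The main subtlety, and the point at which the hypothesis $\max(\sum_j ja_j,\sum_j jb_j)\leq N$ is used, is ensuring that every $\lambda$ appearing in the Schur expansion satisfies $\ell(\lambda)\leq N$ so that orthogonality applies without correction. This follows automatically since $\ell(\lambda)\leq|\lambda|=|\mu|\leq N$; outside this \emph{stable range} one would need to appeal to modification rules relating non-polynomial Schur expressions to genuine $U(N)$ characters, and the clean combinatorial identity would fail.
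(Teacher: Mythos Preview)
The paper does not prove this theorem; it is quoted (without proof) as a result of Diaconis and Gamburd \cite{DG}. Your argument is correct and is essentially the proof given in that reference: expand $e_\mu$ and $e_{\tilde\mu}$ in Schur functions via the Kostka identity $e_\mu=\sum_\lambda K_{\lambda'\mu}s_\lambda$, apply Weyl orthogonality of irreducible $U(N)$ characters, and then identify $\sum_\nu K_{\nu\mu}K_{\nu\tilde\mu}=N_{\mu,\tilde\mu}$ via RSK. Your handling of the stable-range hypothesis is also correct: since $K_{\lambda'\mu}\neq 0$ forces $|\lambda|=|\mu|$, the bound $\ell(\lambda)\le|\lambda|=|\mu|\le N$ guarantees that every Schur function appearing is a genuine $U(N)$ character (equivalently, after conjugating $\nu=\lambda'$, that $\nu_1\le N$ so no terms are lost in the orthogonality step).
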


\subsubsection{Back to the variance calculation}
There is a slight reformulation of Theorem \ref{thm:DG} that will be useful to have stated. Let $\mu = (j_1,...,j_k)$ and $\tilde{\mu} = (\tilde{j_1},...,\tilde{j_k})$ be arrays of non-negative integers (we now impose no condition that they be weakly decreasing), and we generalize $N_{\mu,\tilde{\mu}}$ in the obvious manner, so that it is the count of $k\times k$ matrices $A$ with non-negative integer entries such that $\mbox{row}(A) = \mu$ and $\mbox{col}(A) = \tilde{\mu}$. Note that, by permuting rows and then columns of the matrix $A$, if the arrays $\mu$ and $\nu$ are rearrangements of each other, and likewise for $\tilde{\mu}$ and $\tilde{\nu}$,
$$
N_{\mu,\tilde{\mu}} = N_{\nu,\tilde{\nu}}.
$$
Thus Theorem \ref{thm:DG} may be reformulated as the statement that for $\max(\sum j_i, \sum \tilde{j_i}) \leq N$,
\begin{equation}
\label{eq:DGb2}
\int_{U(N)} \prod_i \Sc_{j_i}(U) \overline{\Sc_{\tilde{j}_i}(U)}\,dU = N_{\mu,\tilde{\mu}}.
\end{equation}
The reformulation is useful for us because in the proof that follows we will be working with arrays that are not ordered.
 


\begin{proof}[Proof of Theorem \ref{Thm: Ik(m,N) for large m}]
For $m\leq N$, note that in the definition \eqref{def of I}, the restriction that $j_i \leq N$ plays no role. Hence,
$$
I_k(m;N):=\int_{U(N)} \left| \sum_{j_1+\cdots +j_k=m} \Sc_{j_1}(U) \dots \Sc_{j_k}(U) \right|^2 dU.
$$
We may expand the square, and, because in the range of summation over $j_i$ we have $j_1+\cdots+j_k=m\leq N$, we may apply \eqref{eq:DGb2} to see that the above expression is just
$$
\sum_{\substack{j_1+\cdots +j_k=m \\ \,\tilde{j}_1+\cdots+ \tilde{j}_k = m}} N_{\mu,\tilde{\mu}}.
$$
But this sum is just the count of all $k\times k$ matrices comprised of non-negative integer entries with the total sum of the entries being $m$. This in turn is just the number of ways of writing $a_1+\cdots+a_{k^2} = m.$ Therefore, for this range of $m\leq N$, $I_k(m;N)$ is the binomial coefficient
$$
I_k(m;N) = \binom{m+k^{2}-1}{k^{2}-1}.
$$
One way to see so is to note that it is the coefficient of $x^m$ in
$$
\sum_{a_i\geq 0} x^{a_1+\cdots+a_{k^2}} = \frac{1}{(1-x)^{k^2}}.
$$

Finally, to deal with the case $(k-1)N \leq m \leq kN$, we use the functional equation, Lemma \ref{functional eq}.
\end{proof}

\subsection{Evaluation in other ranges}\label{sec:other ranges}
It was shown in the previous section how to evaluate $I_k(m;N)$ in the ranges $m\leq N$ and $(k-1)N \leq m \leq kN$.  Our goal here is to illustrate a general method for computing it in all other ranges.  

By \eqref{generating},  we are looking to find the coefficient of $x^{m}$ in the expansion of
\begin{equation}\label{P_k def}
P_k(x)=\int_{U(N)} \det(I-U^{\ast}x)^k\det(I-U)^kdU.
\end{equation}    
This can be calculated using the following Theorem:

\begin{theorem}\cite{cfkrs1},\cite{cfkrs2}\label{partial_sums}
Let $A$ and $B$ be finite collections of complex numbers. Then
\begin{align}
\notag \int_{U(N)} \prod_{\alpha\in A}\det(I-U^{\ast}e^{-\alpha})&
\prod_{\beta\in B}\det(I-Ue^{-\beta})\,dU 
\\ &  = \sum_{\substack{S\subseteq A\\T\subseteq
B\\|T|=|S|}}e^{-N(\sum_{\hat{\alpha}\in s}
\hat{\alpha}+\sum_{\hat{\beta}\in
T}\hat{\beta})}Z(\overline{S}+T^{-},\overline{T}+S^{-})
\end{align}
Where
$$\overline{S}=A-S,~~~~\overline{T}=B-T,~~~~S^{-}=\{-\hat{\alpha},\hat{\alpha}\in
S\},~~~~T^{-}=\{-\hat{\beta},\hat{\beta}\in T\}$$ and
$$Z(A,B)=\prod_{\substack{\alpha\in A\\\beta\in B}}z(\alpha+\beta)$$
with $z(x)=\frac{1}{1-e^{-x}}.$
\end{theorem}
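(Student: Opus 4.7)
The approach I would take combines the Weyl integration formula with the Andréief (Heine) identity and a combinatorial expansion of the resulting Toeplitz determinant.

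First, the Weyl integration formula converts the left-hand side to an $N$-fold torus integral
$$
\mathrm{LHS} = \frac{1}{N!\,(2\pi)^N}\int_{[0,2\pi]^N} \prod_{j=1}^N \phi(e^{i\theta_j})\,|\Delta(e^{i\theta})|^2 \, d\theta_1\cdots d\theta_N,
$$
where $\phi(z) := \prod_{\alpha\in A}(1-e^{-\alpha}z^{-1})\prod_{\beta\in B}(1-e^{-\beta}z)$ is a Laurent polynomial and $\Delta(e^{i\theta}) = \prod_{j<k}(e^{i\theta_j}-e^{i\theta_k})$ is the Vandermonde. Writing $|\Delta|^2 = \det(e^{i(k-1)\theta_j})\det(e^{-i(k-1)\theta_j})$ and applying Andréief's identity, this equals the Toeplitz determinant
$$
D_N(\phi) = \det\bigl(\hat\phi(k-j)\bigr)_{j,k=1}^{N},
$$
where $\hat\phi(n)$ denotes the coefficient of $z^n$ in the Laurent expansion of $\phi$.

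Second, I would insert the explicit expansion
$$
\phi(z) = \sum_{S\subseteq A,\; T\subseteq B} (-1)^{|S|+|T|}\,e^{-\sum_{\alpha\in S}\alpha-\sum_{\beta\in T}\beta}\, z^{|T|-|S|}
$$
into the Leibniz expansion of $D_N(\phi)$. The selection of factors from $\phi$ across each row of the determinant produces a sum indexed by pairs $(S,T)$. Reorganizing this sum, the contributions with $|S|\neq|T|$ cancel due to the mismatched row shifts, while for each surviving pair $(S,T)$ with $|S|=|T|$ the cumulative row shift $\sum_j(|T_j|-|S_j|) = N(|T|-|S|)=0$ is redistributed so that an overall factor $e^{-N(\sum_{S}\alpha+\sum_{T}\beta)}$ emerges from the ``selected'' shifts. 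The ``unpicked'' parameters $\overline S + T^-$ (the elements of $A\setminus S$ together with $-\beta$ for $\beta\in T$) and $\overline T + S^-$ then pair up via the Cauchy-type sum $\sum_{n\geq 0} x^n = 1/(1-x)$ in the Toeplitz structure, producing exactly $Z(\overline S + T^-, \overline T + S^-) = \prod(1-e^{-(a+b)})^{-1}$. The swap between $\alpha$ and $-\alpha$ that appears in the definition of $S^-$ is precisely the functional equation $\det(I-U^*e^{-\alpha}) = (-e^{-\alpha})^N (\det U^*)\det(I-Ue^\alpha)$ applied to the factors selected by $S$.

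The main technical obstacle is this combinatorial reorganization step: tracking signs, row shifts, and subset labels carefully enough to see that the Leibniz--Cauchy--Binet expansion collapses into precisely the stated subset-indexed form is intricate. An alternative approach I would pursue in parallel is the symmetrization argument used in the CFKRS papers: both sides are rational in $\{e^{-\alpha},e^{-\beta}\}$ with poles only on the hypersurfaces $\alpha_i+\beta_j=0$, they are symmetric separately in $A$ and $B$, and one can verify that the residues at each hypersurface match (this reduces to the theorem with $|A|$ and $|B|$ decreased by one, allowing an induction), together with agreement at a single regular point such as $A=B=\{0\}$, which by the rigidity of rational functions with prescribed polar data pins down the identity.
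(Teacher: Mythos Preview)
The paper does not prove this theorem; it is quoted verbatim from \cite{cfkrs1,cfkrs2} and used only as a computational tool in \S\ref{sec:other ranges}. So there is no ``paper's own proof'' to compare against, and any argument you give is necessarily independent of the present paper.

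On the merits of your sketch: the reduction to the Toeplitz determinant $D_N(\phi)$ via Weyl integration and the Andr\'eief identity is standard and correct. The second paragraph, however, is not a proof but a heuristic. Expanding each entry $\hat\phi(k-j)$ as a sum over pairs $(S,T)$ and then expanding the determinant by Leibniz produces a sum over $N$-tuples of subset pairs, one pair per row, not a single global pair $(S,T)$; your passage from the former to the latter is exactly the hard part and you have not carried it out. In particular, the assertions that ``contributions with $|S|\neq|T|$ cancel due to mismatched row shifts'' and that the remaining factors ``pair up via the Cauchy-type sum'' to give precisely $Z(\overline S+T^{-},\overline T+S^{-})$ are not justified, and it is not clear how one isolates a \emph{single} $(S,T)$ from the row-by-row choices. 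This is the genuine gap.

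Your alternative strategy is much closer to what is actually done in \cite{cfkrs1}: one checks that both sides are rational in the variables $e^{-\alpha},e^{-\beta}$ with the same simple poles on the diagonals $\alpha_i+\beta_j=0$, that the residues agree (which reduces to the identity with one fewer $\alpha$ and one fewer $\beta$, giving an induction on $|A|+|B|$), and that the two sides agree at a regular point. If you want a self-contained argument, pursue that route rather than the Leibniz expansion.
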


For example, we find 
\begin{equation}
\begin{split}
P_{2}(x)=\frac{1}{(1-x)^{4}}[&1+x^{2N+4}-(2+N)^{2}x^{1+N}+
 \\&2(3+4N+N^{2})x^{N+2}-(2+N)^{2}x^{N+3}]
\end{split}
\end{equation}
Note that $P_{2}(x)$ satisfies $x^{2N}P_{2}(x)(1/x)=P_{2}(x)(x)$, which corresponds to the functional equation $I_{2}(m;N)=I_{2}(2N-m;N)$.  Evaluating the coefficient of $x^m$ we recover
\begin{equation}
 I_{2}(m;N)=
\begin{cases}
\binom{m+3}{3}  &\mbox{if } m \leq N \\
\binom{2N-m+3}{3} & \mbox{if }N\leq m\leq 2N .
\end{cases}\end{equation}
as proved in the previous section.

Similarly 
\begin{multline}\nonumber
 P_{3}(x)=\frac{1}{(1-x)^{9}}[1-x^{3N+9}+
(3+N)^{2}(4+5N+N^{2})(x^{2+N}-x^{N+7})+
\\ \frac{1}{4}(3+N)^{2}(2+N)^{2}(x^{2N+8}-x^{N+1})+
(3+N)^{2}(10+7N+N^{2})(x^{N+4}-x^{2N+5})+
\\(3+N)^{2}(N+4)^{2}(1/4)(x^{2N+4}-x^{N+5})+
\frac{3}{2}(56+90N+51N^{2}+12N^{3}+N^{4})(x^{2N+6}-x^{N+3})]
\end{multline}
Again $P_{3}(x)$ satisfies $x^{3n}P_{3}(x)(1/x)=P_{3}(x)(x)$, corresponding to the functional equation $I_{3}(m;N)=I_{3}(3N-m;N)$. Hence
\begin{equation}
 I_{3}(m;N)=
\begin{cases}
\binom{m+8}{8}  &\mbox{if } m \leq N+3 \\
\Poly_{8}(m)&\mbox{if } N+3<m < 2N-3 \\
\binom{3N-m+8}{8} & \mbox{if } 2N-3\leq m\leq 3N .
\end{cases}\end{equation}
Where $\Poly_{8}(m)$ is a polynomial in $m$ of degree $8$, and is given by
\begin{multline}\nonumber
\Poly_{8}(m)=\binom{m+8}{8}-
(3+N)^{2}(N+4)^{2}(1/4)\binom{m-N+3}{8}+\\
(3+N)^{2}(10+7N+N^{2})\binom{m-N+4}{8}-
\frac{3}{2}(56+90N+51N^{2}+12N^{3}+N^{4})\binom{m-N+5}{8}+\\
(3+N)^{2}(4+5N+N^{2})\binom{m-N+6}{8}-
\frac{1}{4}(3+N)^{2}(2+N)^{2}\binom{m-N+7}{8}
\end{multline}

This method obviously extends to larger values of $k$, but in practice is effective when $k$ is relatively small.

\subsection{Large N asymptotics: A symmetric function theory approach}\label{asymptotic1}

In this subsection, we give a proof of Theorem \ref{thm:Asymp of I}, determining the asymptotic behavior of $I_k(m; N)$ when $m$ and $N$ grow in ratio to one another. We begin however with a proof of Theorem \ref{lattice_count}, the characterization of $I_k(m;N)$ in terms of a count of lattice points.
It is then in part by estimating this lattice count that we obtain the coefficient $\gamma_k(c)$ in Theorem \ref{thm:Asymp of I}.

\subsubsection{Some preliminaries from symmetric function theory}

The proof below of Theorems \ref{lattice_count} and \ref{thm:Asymp of I} requires some knowledge from symmetric function theory. In order to make our presentation self-contained, in this section we recall for the reader a few concepts that will be necessary. In particular Schur functions, defined below, will play a key role. The reader already familiar with this material may skip ahead to the next subsection. (Standard references for this material include \cite{Bu, Ga, St}; for readers with a background in analytic number theory, \cite{Ga} is perhaps the quickest general introduction.)

Recall (from \ref{dg}), a \emph{partition} $\lambda$ is a sequence $(\lambda_1,...,\lambda_k)$ of positive integers satisfying $\lambda_1 \geq \lambda_2 \geq \dots \geq \lambda_k$. The \emph{length} $\ell(\lambda)$ of such a partition is defined by $\ell(\lambda):=k$. If $1$ appears among the numbers $\lambda_1,...,\lambda_k$ a total of $m_1$ times, $2$ appears $m_2$ times, and so on, we also write $\lambda = \langle 1^{m_1}2^{m_2}\cdots \rangle.$

A \emph{Young diagram} is a collection of boxes arranged in left-justified rows, with a weakly decreasing number of boxes in each row. The partition $(\lambda_1,...,\lambda_k)$ corresponds to a Young diagram with $\lambda_1$ boxes in the first row, $\lambda_2$ boxes in the second, and so on to $\lambda_k$ boxes in the $k$th row. For instance, the partition $(6,4,3,1)$ corresponds to the Young diagram
$$
\ytableausetup{mathmode, boxsize=1.5em}
\ytableaushort{\none,\none,\none,\none}*{6,4,3,1}
$$

For $\lambda$ a partition, a \emph{semistandard Young tableau (SSYT)} of shape $\lambda$ is an array $T = (T_{ij})_{1\leq i \leq \ell(\lambda), 1\leq j \leq \lambda_i}$ of positive integers such that $T_{i,j} \leq T_{i,j+1}$ and $T_{ij} < T_{i+1,j}$. It is common to write SSYTs in a Young diagram, as for example
$$
\ytableausetup{mathmode, boxsize=1.5em}
\ytableaushort{112337,2334,446,7}
$$
This is a SSYT of shape $(6,4,3,1)$. Note that the condition $T_{i,j} \leq T_{i,j+1}$ translates to the array $T$ weakly increasing in every row and $T_{i,j} < T_{i+1,j}$ to strictly increasing in every column.

$T$ has \emph{type} $a = (a_1,a_2,...)$ if $T$ has $a_i =  a_i(T)$ parts equal to $i$. The SSYT above has type $(2,2,4,3,0,1,2)$. It is common to use the notational abbreviation
$$
x^T = x_1^{a_1(T)}x_2^{a_2(T)}\cdots,
$$
so for the example SSYT above,
$$
x^T = x_1^2 x_2^2 x_3^4 x_4^3 x_6 x_7.
$$
We finally come to the combinatorial definition of Schur functions.
\begin{defn}
\label{schur_combinatorial}
For a partition $\lambda$, the Schur function in the variables $x_1,...,x_r$ indexed by $\lambda$ is a multivariable polynomial defined by
$$
s_\lambda(x_1,...,x_r) := \sum_T x_1^{a_1(T)}\cdots x_r^{a_r(T)},
$$
where the sum is over all SSYTs $T$ whose entries belong to the set $\{1,...,r\}$ (i.e. $a_i(T) = 0$ for $i > r$).
\end{defn}

For example, the SSYTs of shape $(2,1)$ whose entries belong to the set $\{1,2,3\}$ are
\vspace{2mm}

\ytableausetup{mathmode, boxsize=1.1em}
\ytableaushort{11,2}
\hspace{3mm}
\ytableausetup{nobaseline}
\ytableaushort{12,2}
\hspace{3mm}
\ytableausetup{nobaseline}
\ytableaushort{13,2}
\hspace{3mm}
\ytableausetup{nobaseline}
\ytableaushort{11,3}
\hspace{3mm}
\ytableausetup{nobaseline}
\ytableaushort{12,3}
\hspace{3mm}
\ytableausetup{nobaseline}
\ytableaushort{13,3}
\hspace{3mm}
\ytableausetup{nobaseline}
\ytableaushort{22,3}
\hspace{3mm}
\ytableausetup{nobaseline}
\ytableaushort{23,3}

\vspace{4mm}

\noindent and so
$$
s_{(2,1)}(x_1,x_2, x_3) = x_1^2 x_2 + x_1 x_2^2 + x_1^2 x_3 + x_1 x_3^2 + x_2^2 x_3 + x_2 x_3^2 + 2 x_1 x_2 x_3.
$$

\subsubsection{A proof of Theorems \ref{lattice_count} and \ref{thm:Asymp of I}}




\begin{proof}[Proof of Theorem \ref{lattice_count}]
Our starting point is again equation \eqref{generating}, which in this case we evaluate using a result of Bump and Gamburd \cite[Prop. 4]{BuGa}: 
\begin{theorem}\cite{BuGa}
\label{bump_gamburd_thm}
Let $\alpha_1,...,\alpha_{L+L'}$ be complex numbers. Then,
$$
\int_{U(N)} \prod_{\ell=1}^L \det(1+ \alpha_\ell^{-1} U^{-1}) \prod_{\ell'=1}^{L'}\det(1+ \alpha_{L+\ell'} U)\,dU = \frac{s_{\langle N^L\rangle}(\alpha_1,...,\alpha_{L+L'})}{\alpha_1^N\cdots \alpha_L^N}.
$$
\end{theorem}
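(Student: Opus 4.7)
My plan is to reduce the integral to a single application of character orthogonality on $U(N)$, after first massaging the integrand with a standard determinantal identity and then applying the dual Cauchy identity for Schur functions. The preliminary step uses
$$\det(1 + \alpha_\ell^{-1} U^{-1}) = \alpha_\ell^{-N}\det(U)^{-1}\det(I + \alpha_\ell U),$$
valid for $U\in U(N)$, to convert each of the $L$ factors involving $U^{-1}$ into a factor of the same shape as those already appearing in the $L'$-product, at the cost of pulling out $(\alpha_1\cdots\alpha_L)^{-N}\det(U)^{-L}$. After this step the left-hand side becomes
$$\frac{1}{(\alpha_1\cdots\alpha_L)^N}\int_{U(N)}\det(U)^{-L}\prod_{i=1}^{L+L'}\det(I + \alpha_i U)\,dU.$$

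Next, writing $z_1,\dots,z_N$ for the eigenvalues of $U$, I would expand the remaining product $\prod_{i,j}(1+\alpha_i z_j)$ using the dual Cauchy identity
$$\prod_{i=1}^{L+L'}\prod_{j=1}^N(1+\alpha_i z_j) = \sum_\lambda s_\lambda(\alpha_1,\dots,\alpha_{L+L'})\,s_{\lambda'}(z_1,\dots,z_N),$$
where $\lambda'$ denotes the conjugate partition of $\lambda$. Since $s_{\lambda'}$ in $N$ variables vanishes unless $\ell(\lambda')\leq N$, i.e. $\lambda_1\leq N$, the sum is effectively over $\lambda$ fitting inside an $(L+L')\times N$ rectangle. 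Interchanging integration and summation reduces the problem to evaluating $\int_{U(N)}\det(U)^{-L}s_{\lambda'}(U)\,dU$ for each such $\lambda$.

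The final step is the decisive one: I would invoke Weyl's description of the irreducible characters of $U(N)$, according to which every weakly decreasing $N$-tuple of integers $a=(a_1,\dots,a_N)$ (possibly with negative entries) labels an irreducible character, and these characters are orthonormal with respect to Haar measure. Padding $\lambda'$ with zeros to length $N$, the integrand $\det(U)^{-L}s_{\lambda'}(U)$ is the character of the $U(N)$-irrep of highest weight $(\lambda'_1-L,\dots,\lambda'_N-L)$, so its Haar integral equals $1$ if this weight is identically zero and $0$ otherwise. The only surviving $\lambda'$ is therefore the rectangle $(L^N)$, whose conjugate is $\langle N^L\rangle$; substituting gives the claimed formula.

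The main point requiring care is the character-theoretic interpretation of $\det(U)^{-L}s_{\lambda'}(U)$ when $\ell(\lambda')<N$, where one must view the product as an irreducible $U(N)$-character of signed highest weight rather than as a polynomial representation character. This is a standard but nontrivial extension of the Schur function formalism, and it is the sole conceptual step in the argument; once acknowledged, the identity of Bump and Gamburd drops out with no further computation.
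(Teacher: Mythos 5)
Your proof is correct. Note that the paper itself does not prove this statement --- it is quoted from \cite{BuGa} --- and your argument (reduce the $U^{-1}$ factors via $\det(1+\alpha^{-1}U^{-1})=\alpha^{-N}\det(U)^{-1}\det(1+\alpha U)$, expand by the dual Cauchy identity, and kill all terms except $\lambda'=(L^N)$ by orthogonality of the irreducible characters $\det(U)^{-L}s_{\lambda'}(U)=s_{(\lambda'_1-L,\dots,\lambda'_N-L)}(U)$ of $U(N)$) is essentially the standard proof given by Bump and Gamburd. The one point you flag, interpreting $\det(U)^{-L}s_{\lambda'}(U)$ as an irreducible rational character of signed highest weight, is indeed the only conceptual step, and it is handled correctly: since the sum over $\lambda$ inside the $(L+L')\times N$ rectangle is finite, the interchange of sum and integral needs no further justification.
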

Here $s_{\langle N^L\rangle }$ is a Schur function indexed by the partition $\langle N^L \rangle$.

By specializing this Theorem, we see that,
\begin{equation}
\label{bump_gamburd}
\int_{U(N)} \det(1+\alpha U)^k \det(1+\beta U^{-1})^k\, dU = \alpha^{kN} s_{\langle N^k\rangle}(\underbrace{\alpha^{-1},...,\alpha^{-1}}_{k\textrm{ terms}}, \underbrace{\beta, ..., \beta}_{k\textrm{ terms}}).
\end{equation}
Expanding the Schur function as a polynomial and labeling the coefficients, we have
$$
\alpha^{kN} s_{(N^k)}(\underbrace{\alpha^{-1},...,\alpha^{-1}}_{k\textrm{ terms}}, \underbrace{\beta, ..., \beta}_{k\textrm{ terms}}) = \sum c_{ij} \alpha^i \beta^j.
$$
By comparison with \eqref{generating}, we see that $I_k(m;N) = c_{mm}.$

From the combinatorial definition of Schur functions (Definition \ref{schur_combinatorial} above), we see that $c_{mm}$ is the number of semistandard Young tableau (SSYT) $T$ such that if, as before, $a_i$ denotes the number of $i$'s in $T$,
$$
a_{k+1}+\cdots+ a_{2k} = m,
$$
and $a_i = 0$ for $i > 2k$.

We parametrize such tableaux $T$ by letting $y^{(s)}_r = y^{(s)}_r(T)$ be the rightmost position of the entry $s$ in row $r$; if $s$ does not occur in row $r$, inductively define $y^{(s)}_r = y^{(s-1)}_r$, with $y_r^{(1)} = 0$ if the entry $1$ does not occur in row $r$. So, for instance, in the SSYT $T$ on the partition $(7^2)$ with entries ranging from $1$ to $4$ given by 
$$
T = \ytableausetup{nobaseline, boxsize = 1.5em}
\ytableaushort{1112333,2224444}
$$
we have
$$
\begin{pmatrix}
y_1^{(1)} & y_1^{(2)} & y_1^{(3)} & y_1^{(4)} \\
y_2^{(1)} & y_2^{(2)} & y_2^{(3)} & y_2^{(4)} 
\end{pmatrix}
=
\begin{pmatrix}
3 & 4 & 7 & 7 \\
0 & 3 & 3 & 7
\end{pmatrix}.
$$ 
Note that here $y_2^{(1)}=0$ and $y_1^{(3)} = y_1^{(4)} = y_2^{(4)} = 7.$ That these entries should take these values is necessarily the case; if the $2^{\textrm{nd}}$ row began with $1$, then $T$ could not be made to be strictly increasing in columns, and for the same reason the $1^{\textrm{st}}$ row may not end with $4$. 

Moreover, note that because rows increase weakly,
\begin{equation}
\label{order1}
y_r^{(s)} \leq y_r^{(s+1)}
\end{equation}
and because columns increase strongly,
\begin{equation}
\label{order2}
y_{r+1}^{(s+1)} \leq y_r^{(s)}.
\end{equation}
With these restrictions \eqref{order1} and \eqref{order2} in place, there is a bijection between arrays
$$
\begin{pmatrix}
y_1^{(1)} & y_1^{(2)} & \cdots & y_1^{(k)} & N           & \cdots & \cdots & N \\
0         & y_2^{(2)} & \cdots & y_2^{(k)} & y_2^{(k+1)} & N      & \cdots & N \\
\vdots    & \ddots    & \ddots & \ddots    & \ddots      & \ddots &
\ddots & \vdots \\
0         & 0         & \cdots & y_k^{(k)} & y_k^{(k+1)} & \cdots &  y_k^{(2k-1)} & N
\end{pmatrix}
$$
with $y_r^{(s)} \in [0,N] \cap \mathbb{Z}$ and SSYT of $\langle N^k\rangle$ with entries ranging from $1$ to $2k$.

It is easy to see that those SSYT for which $a_{k+1} + \cdots + a_{2k}=m$ correspond to those arrays in which $(N-y_1^{(k)}) + (N- y_2^{(k)}) + \cdots + (N- y_k^{(k)}) = m.$ By re-indexing $x_r^{(s)} = y_r^{(s+r-1)}$, we obtain the proposition.
\end{proof}

With Theorem \ref{lattice_count} in hand, getting an expression for $\gamma_k(c)$ in Theorem \ref{thm:Asymp of I}, as we will see, is a more or less standard argument in counting lattice points. On the other hand, in order to simplify the expression we get to \eqref{def of gamma2}, it will be useful to have done the following computation beforehand.

\begin{lemma}
\label{Vand_Reduce}
As usual, define the Vandermonde determinant by 
$$
\Delta(w_1,w_2,...,w_k) := \prod_{i > j} (w_i - w_j),
$$
and for $\beta \in \mathbb{R}^{k+1}$ satisfying $\beta_1 \leq \beta_2 \leq \cdots \leq \beta_{k+1}$, define
$$
I(\beta) = \{\alpha\in \mathbb{R}^k: \beta_1 \leq \alpha_1 \leq \beta_2 \leq \alpha_2 \leq \cdots \leq \alpha_k \leq \beta_{k+1}\}.
$$
Then 
\begin{equation}
\label{vand_reduce}
\int_{\alpha\in I(\beta)} \Delta(\alpha_1, \alpha_2, ..., \alpha_k) \, d^k\alpha = \frac{1}{k!} \Delta(\beta_1,...,\beta_{k+1}).
\end{equation}
\end{lemma}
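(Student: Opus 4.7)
The plan is to exploit the determinantal form of the Vandermonde and recognize the region $I(\beta)$ as a rectangular box, reducing the integral to a tidy linear-algebra computation.

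First, I would observe that, because $\beta_1\leq \beta_2 \leq \cdots \leq \beta_{k+1}$, the interleaving constraints defining $I(\beta)$ decouple: the condition $\beta_j \leq \alpha_j \leq \beta_{j+1} \leq \alpha_{j+1}$ is equivalent to $\alpha_j \in [\beta_j,\beta_{j+1}]$ for each $j=1,\ldots,k$ independently. Hence $I(\beta)$ is literally the Cartesian product $\prod_{j=1}^k [\beta_j,\beta_{j+1}]$, and Fubini applies without further thought.

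Next, I would use the standard identity
\begin{equation*}
\Delta(\alpha_1,\ldots,\alpha_k) = \det\bigl(\alpha_j^{\,i-1}\bigr)_{1\leq i,j\leq k} = \sum_{\sigma\in S_k}\sgn(\sigma)\prod_{j=1}^k \alpha_j^{\,\sigma(j)-1}.
\end{equation*}
Since the integrand is now a sum of pure monomials in distinct variables and the domain is a product of intervals, each summand integrates separately: $\int_{\beta_j}^{\beta_{j+1}} \alpha_j^{\,\sigma(j)-1}\,d\alpha_j = (\beta_{j+1}^{\sigma(j)} - \beta_j^{\sigma(j)})/\sigma(j)$. Resumming the signed permutations recovers a determinant, giving
\begin{equation*}
\int_{I(\beta)} \Delta(\alpha)\, d^k\alpha = \det\!\left(\frac{\beta_{j+1}^{\,i}-\beta_j^{\,i}}{i}\right)_{1\leq i,j\leq k} = \frac{1}{k!}\,\det\bigl(\beta_{j+1}^{\,i}-\beta_j^{\,i}\bigr)_{1\leq i,j\leq k},
\end{equation*}
where in the second step I pull the factor $1/i$ out of row $i$, producing the prefactor $1/(1\cdot 2\cdots k)=1/k!$.

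It then remains to identify the final $k\times k$ determinant with $\Delta(\beta_1,\ldots,\beta_{k+1})$. For this, I would start from the $(k+1)\times(k+1)$ Vandermonde matrix $M=(\beta_j^{\,i-1})_{0\leq i-1\leq k,\, 1\leq j\leq k+1}$, whose determinant is by definition $\Delta(\beta_1,\ldots,\beta_{k+1})$, and perform the column operations $C_{j+1}\mapsto C_{j+1}-C_j$ for $j=k,k-1,\ldots,1$ (these leave the determinant unchanged). The top row collapses to $(1,0,\ldots,0)$, so a Laplace expansion along it yields exactly $\det(\beta_{j+1}^{\,i}-\beta_j^{\,i})_{1\leq i,j\leq k}$, finishing the proof. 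I do not anticipate a genuine obstacle here; the only delicate point is matching indices and signs carefully when converting between the interleaving description of $I(\beta)$, the determinantal form of $\Delta$, and the column-reduction step on the $(k+1)\times(k+1)$ Vandermonde.
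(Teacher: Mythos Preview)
Your proof is correct and follows essentially the same approach as the paper: write $\Delta(\alpha)$ as a determinant, integrate each column over its interval to obtain $\frac{1}{k!}\det(\beta_{j+1}^{\,i}-\beta_j^{\,i})$, and then identify this $k\times k$ determinant with $\Delta(\beta_1,\ldots,\beta_{k+1})$. The only cosmetic difference is in this last identification---the paper expands the $k\times k$ determinant by multilinearity into $2^k$ terms, discards those with repeated columns, and recognizes the surviving $k+1$ terms as a Laplace expansion of the $(k+1)\times(k+1)$ Vandermonde, whereas you run the same argument in reverse via column operations; the two are equivalent.
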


\begin{proof}
Because of the well known identity $\Delta(w) = \det(w_\mu^{\nu-1})$, we see that the left hand side of \eqref{vand_reduce} is just
\begin{align*}
&\int_{\alpha\in I(\beta)} \det\begin{pmatrix} 1 & 1 & \cdots & 1 \\
\alpha_1 & \alpha_2 & \cdots & \alpha_{k} \\
\vdots & \vdots & \ddots & \vdots \\
\alpha_1^{k-1} & \alpha_2^{k-1} & \cdots & \alpha_{k}^{k-1} \end{pmatrix}\, d^{k}\alpha \\
&= \det\begin{pmatrix} \beta_2-\beta_1 & \beta_3-\beta_2 & \cdots & \beta_{k+1}-\beta_k \\
(\beta_2^2-\beta_1^2)/2 & (\beta_3^2-\beta_3^2)/2 & \cdots & (\beta_{k+1}^2-\beta_k^2)/2 \\
\vdots & \vdots & \ddots & \vdots \\
(\beta_2^k-\beta_1^k)/k & (\beta_3^k-\beta_2^k)/k & \cdots & (\beta_{k+1}^k-\beta_k^k)/k \end{pmatrix}
\end{align*}
by integrating one variable at a time and using multilinearity. But again applying multilinearity (twice), we see that this is just
$$
\frac{1}{k!} \sum_{\varepsilon\in \{0,1\}^k} (-1)^{k-|\varepsilon|} \det\begin{pmatrix} 
\beta_{1+\varepsilon_1} & \beta_{2+\varepsilon_2} & \cdots & \beta_{k+\varepsilon_k} \\
\beta_{1+\varepsilon_1}^2 & \beta_{2+\varepsilon_2}^2 & \cdots & \beta_{k+\varepsilon_k}^2 \\
\vdots & \vdots & \ddots & \vdots \\
\beta_{1+\varepsilon_1}^k & \beta_{2+\varepsilon_2}^k & \cdots & \beta_{k+\varepsilon_k}^k 
\end{pmatrix},
$$
where $|\varepsilon|$ is the number of $i$ such that $\varepsilon_i=1$. Clearly the determinant in the summand will be $0$ unless $\varepsilon$ is one of the $k+1$ possibilities: $(1,1,1,...,1)$, $(0,1,1,...,1)$, $(0,0,1,...,1)$, ..., $(0,0,0,...,0)$. Thus the sum above is just a Laplace expansion of
$$
\frac{1}{k!} \det\begin{pmatrix} 1 & 1 & \cdots & 1 \\
\beta_1 & \beta_2 & \cdots & \beta_{k+1} \\
\vdots & \vdots & \ddots & \vdots \\
\beta_1^k & \beta_2^k & \cdots & \beta_{k+1}^k \end{pmatrix}
$$
as claimed.
\end{proof}

\begin{proof}[Proof of Theorem \ref{thm:Asymp of I}]
We demonstrate first that \eqref{thm:poly_approx} of Theorem \ref{thm:Asymp of I} holds with $\gamma_k(c)$ given by
\begin{equation}
\label{gamma1}
\gamma_k(c) = \int_{[0,1]^{k^2}} \delta_c(u_1^{(k)} + u_2^{(k-1)} + \cdots + u_k^{(1)}) \mathbf{1}_{A_k}(u)\, d^{k^2}u,
\end{equation}
where $\mathbf{1}_{A_k}$ is the indicator function of the set ${A_k}$ (defined in the statement of Theorem \ref{lattice_count}). 

The truth of this should come as no surprise; we have just approximated a lattice count with a continuous approximation. Later we show that this integral is equal to the right hand side of \eqref{def of gamma2}.

Our proof of this first part is standard. For notational reasons let $S = \{(i,j): 1\leq i,j \leq k: (i,j) \neq (1,k)\}$, and let $V_c$ be the convex region contained in $\mathbb{R}^{k^2-1}= \{(u_i^{(j)})_{(i,j)\in S}: u_i^{(j)}\in \bb{R}\}$ defined by the following system of inequalities:
\begin{enumerate}
\item $0 \leq u_i^{(j)} \leq 1,$ for all  $(i,j) \in S,$
\item For $u_1^{(k)} := c - (u_2^{(k-1)} + \cdots + u_k^{(1)})$, we have $0\leq u_1^{(k)} \leq 1$, and
\item The matrix $(u_i^{(j)})_{1\leq i,j\leq k}$ lies in the set $A_k$.

\end{enumerate}


This region is convex because it is the intersection of half planes. Note moreover that for all $c\in [0,k]$, the region $V_c$ is contained in $[0,1]^{k^2-1}$, and therefore contained in a closed ball of radius $\sqrt{k^2-1}$.

Theorems \ref{lattice_count} and Lemma \ref{functional eq} show that
\begin{equation}
\label{dilation_count}
I_k(m;N) = \# (\mathbb{Z}^{k^2-1} \cap (N\cdot V_c)),
\end{equation}
where $N\cdot V_c = \{Nx: x \in V_c\}$ is the dilate of $V_c$ by a factor of $N$. 

We will need to reference the well known principle that a count of lattice points in a region can be approximated by the volume of the region (at least in ordinary circumstances). A result of the sort we quote below dates back to Davenport \cite{Da1,Da2}; the clean formulation we have cited here may be found in \cite[Section 2]{Sc}.
\begin{theorem}
\label{lattice_to_volume}
If $S \subset \mathbb{R}^\ell$ is a convex region contained in a closed ball of radius $\rho$, then
\begin{equation}
\label{schmidt}
\#(S\cap \mathbb{Z}^\ell) = \mathrm{vol}_\ell(S) + O(\rho^{\ell-1}),
\end{equation}
where the implicit constant depends only on $\ell$.
\end{theorem}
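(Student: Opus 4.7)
The plan is a standard cube-decomposition argument combined with a surface-area bound. To each lattice point $n \in \mathbb{Z}^\ell$ I would associate the unit cube $C_n := n + [-1/2, 1/2)^\ell$; these cubes tile $\mathbb{R}^\ell$, and expressing both sides as sums over $n$,
\begin{equation*}
\mathrm{vol}_\ell(S) = \sum_{n} \mathrm{vol}(S \cap C_n), \qquad \#(S \cap \mathbb{Z}^\ell) = \sum_{n} \mathbf{1}_S(n),
\end{equation*}
the two sums agree termwise whenever $C_n$ lies entirely inside or entirely outside $S$. Hence
\begin{equation*}
\bigl|\#(S \cap \mathbb{Z}^\ell) - \mathrm{vol}_\ell(S)\bigr| \leq 2\cdot \#\{n \in \mathbb{Z}^\ell : C_n \cap \partial S \neq \emptyset\},
\end{equation*}
so the task reduces to counting the ``boundary cubes''.

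Next, since the center of each boundary cube lies within distance $\sqrt{\ell}/2$ of $\partial S$, a routine Minkowski-type comparison bounds the number of such cubes by a dimensional constant times $\mathrm{area}_{\ell-1}(\partial S)$. Finally, because $S \subseteq B_\rho$ with both $S$ and $B_\rho$ convex, the classical monotonicity of surface area under inclusion of nested convex bodies gives $\mathrm{area}_{\ell-1}(\partial S) \leq \mathrm{area}_{\ell-1}(\partial B_\rho) = c_\ell \rho^{\ell-1}$. Combining these three ingredients produces the theorem with an implicit constant depending only on $\ell$.

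The step I expect to be most subtle is the surface-area monotonicity for convex bodies, which crucially depends on convexity (the conclusion of the theorem can fail for non-convex regions with highly oscillatory boundary). A standard proof of this monotonicity uses that radial projection from an interior point of $S$ onto $\partial B_\rho$ is a distance-nondecreasing map on $\partial S$; the full technical details are classical, due to Davenport \cite{Da1,Da2} (see also Schmidt \cite[\S2]{Sc}), and I would defer to those references for the proof in the precise formulation cited.
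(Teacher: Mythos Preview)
The paper does not prove this theorem: it is quoted without proof as a known result, with attribution to Davenport \cite{Da1,Da2} and the formulation taken from Schmidt \cite[\S2]{Sc}. Your sketch is a reasonable outline of the classical argument and, appropriately, you defer to exactly the same references for the technical details. There is therefore nothing in the paper to compare your proof against.

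One small caveat on your sketch: the step ``number of boundary cubes $\ll \mathrm{area}_{\ell-1}(\partial S)$'' as written fails when $\rho$ is small (e.g.\ $S$ a single lattice point has one boundary cube but zero surface area). The correct bound picks up an additive $O_\ell(1)$, which is harmless here since the theorem is only applied with $\rho \asymp N$ large; but if you write this up you should either assume $\rho \geq 1$ or absorb the constant explicitly.
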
 

Applying \eqref{schmidt}, with $\ell=k^2-1$, we see
$$
I_k(m;N) = \mathrm{vol}(N\cdot V_c) + O_k(N^{k^2-2}).
$$
Yet clearly
$$
\mathrm{vol}(N\cdot V_c) = N^{k^2-1} \int_{[0,1]^{k^2}} \delta_c(u_1^{(k)} + \cdots + u_k^{(1)}) \mathbf{1}_{A_k}(u)\, d^{k^2} u,
$$
which implies \eqref{thm:poly_approx}, with $\gamma_k(c)$ given by \eqref{gamma1}.

It remains to show that this integral can be reduced to the expression defined in \eqref{def of gamma2}. Here we make use of Lemma \ref{Vand_Reduce}. We have, by applying it inductively,
\begin{align*}
\gamma_k(c) &= \int_{[0,1]^{k^2}} \delta_c(u_1^{(k)} + \cdots + u_k^{(k)})\cdot \mathbf{1}\left[\begin{smallmatrix} 
u_1^{(1)} & \leq & u_1^{(2)}  & \leq & \cdots &      & \cdots          &      & \cdots        \\
\vleq     &      & \vleq      &      &        &      &                 &      &               \\
u_2^{(1)} & \leq & u_2^{(2)}  & \leq & \cdots &      & \cdots          &      & \cdots        \\
\vleq     &      & \vleq      &      &        &      &                 &      &               \\
\vdots    &      & \vdots     &      & \ddots &      & \vdots          &      & \vdots        \\
          &      &            &      &        &      & \vleq           &      & \vleq         \\
\cdots    &      & \cdots     &      & \cdots & \leq & u_{k-1}^{(k-1)} & \leq & u_{k-1}^{(k)} \\
          &      &            &      &        &      & \vleq           &      & \vleq         \\
\cdots    &      & \cdots     &      & \cdots & \leq & u_k^{(k-1)}     & \leq & u_k^{(k)} 
\end{smallmatrix}\right]
\, d^{k^2} u \\
&= \int_{[0,1]^{k^2-2}}\delta_c(u_1^{(k)} + \cdots + u_k^{(1)})\cdot
\mathbf{1}\left[\begin{smallmatrix} 
          &      & u_1^{(2)}  & \leq & \cdots &      & \cdots          &      & \cdots        \\
          &      & \vleq      &      &        &      &                 &      &               \\
u_2^{(1)} & \leq & u_2^{(2)}  & \leq & \cdots &      & \cdots          &      & \cdots        \\
\vleq     &      & \vleq      &      &        &      &                 &      &               \\
\vdots    &      & \vdots     &      & \ddots &      & \vdots          &      & \vdots        \\
          &      &            &      &        &      & \vleq           &      & \vleq         \\
\cdots    &      & \cdots     &      & \cdots & \leq & u_{k-1}^{(k-1)} & \leq & u_{k-1}^{(k)} \\
          &      &            &      &        &      & \vleq           &      &               \\
\cdots    &      & \cdots     &      & \cdots & \leq & u_k^{(k-1)}     &      & 
\end{smallmatrix}\right]\\
&\hspace{47mm} \times \frac{\Delta(u_1^{(2)}, u_2^{(1)})}{1!} \frac{\Delta(u_{k-1}^{(k)}, u_{k}^{(k-1)})}{1!}
\, d^{k^2-2} u \\
&=\cdots \\
&= \int_{[0,1]^k} \delta_c(u_1^{(k)} + \cdots + u_k^{(1)})\cdot \mathbf{1}(u_k^{(1)} \leq u_{k-1}^{(2)} \leq \cdots \leq u_1^{(k)})\\
& \hspace{22mm} \times \frac{\Delta(u_1^{(k)}, u_{2}^{(k-1)}, \cdots, u_k^{(1)})}{1!\cdot 2! \cdots (k-1)!}\cdot\frac{\Delta(u_1^{(k)}, u_{2}^{(k-1)}, \cdots, u_k^{(1)})}{1!\cdot 2! \cdots (k-1)!} \,d^k u \\
&= \frac{1}{k!\, G(1+k)^2} \int_{[0,1]^k}  \delta_c(w_1+\cdots+ w_k) \Delta(w)^2\, d^k w,
\end{align*}
with the last step following from symmetry.
\end{proof}

We note for the reader familiar with Gelfand-Tsetlin patterns that what we have done in these last few steps is to compute the volume of what is called a Gelfand-Tsetlin polytope. A computation of this volume has appeared before in the literature (see \cite{Ba} for a proof using representation theory, or \cite{Ol} for a proof using the  Harish-Chandra-Itzykson-Zuber integral), but the elementary proof we give here based on Lemma \ref{Vand_Reduce} seems to be new.

\subsubsection{Ehrhart theory}
Theorem \ref{lattice_count} also allows us to say something about the algebraic character of the quantities we have been discussing.

\begin{cor}
\label{ehrhart}
Let $c=p/q$ be fixed rational number and $k$ be a fixed integer. If $N$ is a multiple of $q$, then $I_k(cN, N) = P_{c,k}(N)$, where $P_{c,k}$ is a polynomial of degree $k^2-1$.
\end{cor}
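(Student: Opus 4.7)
The plan is to apply Ehrhart's theorem for rational convex polytopes to the lattice-point description afforded by Theorem \ref{lattice_count}. That theorem realizes $I_k(m; N)$ as the count of integer points $x \in \Z^{k^2}$ in a polytope $P_{m, N}$ whose defining inequalities are affine-linear in $(m, N)$ (the box constraints, the chain constraints, and the antidiagonal sum constraint). Setting $c = p/q$ in lowest terms and $N = qt$ (so $m = pt$), every defining linear form and every right-hand side scales linearly with $t$, and consequently $I_k(pt, qt) = |t P_0 \cap \Z^{k^2}|$, where $P_0$ is a fixed rational polytope of dimension $k^2 - 1$ (lying in a $(k^2-1)$-dimensional rational affine subspace of $\R^{k^2}$) depending only on $k$ and $c$.

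By Ehrhart's theorem for rational polytopes, $L(t) := |t P_0 \cap \Z^{k^2}|$ is a quasi-polynomial in $t$ of degree $\dim P_0 = k^2-1$, with positive leading coefficient equal to the $(k^2-1)$-dimensional volume of $P_0$. The main step in the proof, and the principal obstacle, is to verify that $L(t)$ is in fact a genuine polynomial in $t$, not merely a quasi-polynomial. Once this is established, the corollary follows immediately by setting $P_{c,k}(N) := L(N/q)$.

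To upgrade quasi-polynomial to polynomial, we exploit the Young-tableau interpretation used in the proof of Theorem \ref{lattice_count}: lattice points in $t P_0$ are in bijection with semistandard Young tableaux of rectangular shape $\langle (qt)^k\rangle$ with entries in $\{1,\ldots,2k\}$ in which exactly $pt$ of the entries exceed $k$. Applying the branching rule for $GL_{2k} \downarrow GL_k \times GL_k$, together with the classical identity $c^{\langle N^k\rangle}_{\mu,\nu} = \mathbf{1}[\nu_i = N - \mu_{k+1-i}]$ for Littlewood--Richardson coefficients with rectangular highest weight, the count factors as
\begin{equation*}
I_k(pt,qt) = \sum_{\substack{\mu:\ 0 \leq \mu_k \leq \cdots \leq \mu_1 \leq qt \\ |\mu| = (kq-p)t}} (\dim V_\mu)\,(\dim V_{\nu(\mu)}), \qquad \nu(\mu)_i := qt - \mu_{k+1-i},
\end{equation*}
where $V_\mu$ denotes the irreducible polynomial $GL_k$-representation of highest weight $\mu$. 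Each dimension is a polynomial in the parts $\mu_i$ of degree $\binom{k}{2}$ by Weyl's formula, and the summation range is the $t$-dilate of a fixed rational polytope of dimension $k-1$. A standard Brion--Vergne-type argument (equivalently, the classical polynomiality of Kostka numbers under uniform scaling) then shows the sum is a polynomial in $t$ of degree $(k-1) + 2\binom{k}{2} = k^2-1$. Combined with the Ehrhart observation above, this yields $P_{c,k}$ as a polynomial in $N$ of degree $k^2-1$, completing the proof.
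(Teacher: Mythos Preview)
Your approach diverges substantially from the paper's. The paper's argument is three lines: writing $N=q\ell$, it observes that $I_k(cN;N)=\#\big(\Z^{k^2-1}\cap \ell\cdot(qV_c)\big)$, asserts that $qV_c$ is a \emph{lattice} polytope (integer vertices) rather than merely rational, and then invokes Ehrhart's theorem for lattice polytopes, which yields a genuine polynomial in $\ell$ directly --- no quasi-polynomial stage, no branching rule, no representation theory.

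Your route is more elaborate and has a genuine gap at the end. The branching decomposition you write down is correct and attractive (note incidentally that $\dim V_{\nu(\mu)}=\dim V_\mu$, since complementing in a rectangle preserves the Weyl dimension). But the final sentence is not a proof: you are once again summing a polynomial --- the product of Weyl dimensions --- over lattice points in the $t$-dilate of a \emph{rational} polytope. Indeed the slice $\{0\le\mu_k\le\cdots\le\mu_1\le q,\ |\mu|=kq-p\}$ has the vertex $\mu_1=\cdots=\mu_k=(kq-p)/k$, which is non-integral whenever $k\nmid p$. Weighted Ehrhart theory for a rational polytope again produces only a quasi-polynomial, so the difficulty you set out to resolve has merely been relocated to a lower-dimensional polytope. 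The appeal to ``a standard Brion--Vergne-type argument'' or ``polynomiality of Kostka numbers under uniform scaling'' does not fill this in; Kostka polynomiality concerns a single coefficient $K_{t\lambda,t\mu}$, not a sum of polynomial weights over a dilated rational family. (It is perhaps worth remarking that the paper's own assertion that $qV_c$ is a lattice polytope is not verified there and is in fact delicate --- for $k=2$, $c=1$, the point with all coordinates equal to $\tfrac12$ is a non-integral vertex --- so the issue you are grappling with is real; but your proposal does not yet resolve it.)
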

\begin{proof}
This corollary follows from an application of a theorem of Ehrhart \cite{Er}: 
\begin{theorem}
\label{er_theorem}
If $E$ is a convex lattice polytope in $\mathbb{R}^n$ (that is, a polytope whose vertices are all integer coordinates), then there is a polynomial $P$ of degree $n$, such that for all $\ell \in \mathbb{N}_{> 0}$,
$$
\#(\mathbb{Z}^n \cap (\ell\cdot E)) = P(\ell).
$$
\end{theorem}
Returning to the corollary at hand, we have from \eqref{dilation_count}, when $N=q\ell$,
$$
I_k(cN;N) = \# (\mathbb{Z}^{k^2-1} \cap (\ell\cdot[q\cdot V_c])).
$$
But then it is straightforward to verify that $q V_c = q V_{p/q}$ is a convex lattice polytope in $\mathbb{R}^{k^2-1}$, so that $I_k(cN;N)$ is a polynomial in $\ell$ and therefore in $N$.
\end{proof}

\subsection{Large N asymptotic: the complex analysis approach}\label{asymptotic2}

In this subsection we prove Theorem~\ref{integral analytic}.  The approach we take is based on the following expression proved in \cite{cfkrs1} (Lemma 2.1):

\begin{theorem}\cite{cfkrs1} \label{complex integral representation}
Let $\alpha_{i},\beta_{j}$ be complex numbers. Then,
\begin{equation*}
\begin{split}
\int_{U(N)}& \prod_{i=1}^{r}\det(I-U^{\ast}e^{-\alpha_{i}})
\prod_{j=1}^{r}\det(I-Ue^{-\beta_{j}})dU\\
&=\frac{(-1)^{r}e^{N(\alpha_{1}+\cdots+\alpha_{r})}}{(2\pi i)^{2r}(r!)^{2}}\oint\cdots\oint e^{-N(z_{r+1}+\cdots+z_{2r})}\prod_{\substack{1\leq l\leq r\\r+1\leq q \leq 2r}}(1-e^{z_{q}-z_{l}})^{-1}
\\&\times
\frac{\Delta(z_{1},\ldots,z_{2r})^{2}}
{\prod_{i=1}^{2r}\prod_{j=1}^{r}(z_{i}-\alpha_{j})(z_{i}-\beta_{j})}
dz_{1}\cdots dz_{2r}
\end{split}
\end{equation*}
Where $\Delta(z_1,\dots, z_{2r}) = \prod_{i<j}(z_j-z_i)$ is the vandermonde determinant, and the contour integrals enclose the variables $\alpha_{i},\beta_{j}.$

\end{theorem}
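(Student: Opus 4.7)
The plan is to establish the identity by reducing the $U(N)$ average to an eigenvalue integral and then repackaging the result as a residue sum. First I would apply the Weyl integration formula to write
\[
\int_{U(N)} F(U)\, dU = \frac{1}{N!\,(2\pi)^N} \int_{[0,2\pi]^N} |\Delta(e^{i\theta_1},\ldots,e^{i\theta_N})|^2\, F(\mathrm{diag}(e^{i\theta_n}))\, d^N\theta,
\]
valid for any class function $F$. Each characteristic polynomial factors across eigenvalues, so the integrand becomes $|\Delta(e^{i\theta})|^2 \prod_n G(\theta_n)$, where
\[
G(\theta) = \prod_{i=1}^r (1 - e^{-\alpha_i - i\theta}) \prod_{j=1}^r (1 - e^{-\beta_j + i\theta}).
\]

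Next I would invoke Andreief's identity (the Heine--de Bruijn identity) to collapse this symmetric integral into an $N\times N$ determinant of single integrals, $\det\bigl[\tfrac{1}{2\pi}\int_0^{2\pi} e^{i(m-n)\theta}G(\theta)\,d\theta\bigr]_{1\leq m,n\leq N}$. The substitution $w=e^{i\theta}$ turns each entry into a contour integral around $|w|=1$ whose interior poles come from the $\alpha$-factors and whose expansions at $0$ and $\infty$ are controlled by the $\beta$-factors. A Cauchy--Binet expansion of this determinant, sorted by which factor of $G$ supplies each row's pole, reassembles the result into the claimed $2r$-fold contour integral: the dummy variables split naturally into $r$ ``$\alpha$-type'' variables $z_1,\ldots,z_r$ and $r$ ``$\beta$-type'' variables $z_{r+1},\ldots,z_{2r}$, the square $\Delta(z_1,\ldots,z_{2r})^2$ arises as the residue incarnation of $|\Delta(e^{i\theta})|^2$, and the factor $\prod_{l\leq r<q\leq 2r}(1 - e^{z_q - z_l})^{-1}$ emerges as the generating function for the row-selection indices (essentially a dual Cauchy identity). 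The prefactor $(-1)^r e^{N(\alpha_1+\cdots+\alpha_r)}$ and the decay $e^{-N(z_{r+1}+\cdots+z_{2r})}$ then track the signs and shifts needed to balance the two pole classes.

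The main obstacle I expect is the combinatorial bookkeeping: matching the $1/(r!)^2$ normalization that comes from independently permuting $(z_1,\ldots,z_r)$ and $(z_{r+1},\ldots,z_{2r})$, positioning the contours so that $(1-e^{z_q-z_l})^{-1}$ is single-valued on the product contour, and verifying that all apparent diagonal singularities cancel. A cleaner alternative I would pursue in parallel is to verify the identity as an equality of meromorphic functions of the parameters $(\alpha_1,\ldots,\alpha_r,\beta_1,\ldots,\beta_r)$: both sides are symmetric in the $\alpha$'s and in the $\beta$'s, both are regular at the would-be diagonal hyperplanes (by antisymmetry of the integrand under the symmetrization), both exhibit prescribed residue behaviour at $\alpha_i+\beta_j=0$, and both share the same exponential growth. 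Analytic continuation would then let me propagate the identity from a convenient specialization, for example by pairing the confluent limit against the Bump--Gamburd Schur-polynomial evaluation of Theorem~\ref{bump_gamburd_thm}, which already pins down the $U(N)$ average uniquely.
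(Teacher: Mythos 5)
First, a point of reference: the paper does not prove this statement at all; it is quoted verbatim from \cite{cfkrs1} (Lemma 2.1 there), so your attempt can only be compared with the proof in that reference. Measured that way, your main route has a genuine gap. Everything up to the Weyl/Andreief reduction to an $N\times N$ Toeplitz determinant with symbol $G$ is standard, but the decisive step --- passing from that determinant to the specific $2r$-fold contour integral with the kernel $\prod_{l\le r<q}(1-e^{z_q-z_l})^{-1}$ and the structure $\Delta(z)^2/\prod_{i,j}(z_i-\alpha_j)(z_i-\beta_j)$ --- is exactly the content of the theorem, and you assert it (``reassembles'', ``emerges as the generating function for the row-selection indices'') rather than derive it. The derivation in \cite{cfkrs1} goes the other way around: one first proves the finite sum over pairs of subsets $S\subseteq A$, $T\subseteq B$ with $|S|=|T|$ (the paper's Theorem~\ref{partial_sums}), and then verifies the contour-integral form by residue calculus. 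Concretely: the only poles enclosed are at $z_i\in\{\alpha_j\}\cup\{\beta_j\}$ (the apparent poles at $z_q=z_l$ are cancelled by $\Delta^2$); the factor $\Delta(z)^2$ annihilates every residue configuration in which two $z$'s land on the same parameter, so the surviving configurations are bijections of $(z_1,\dots,z_{2r})$ onto the $2r$ parameters; the $(r!)^2$ cancels the overcounting from permuting $z_1,\dots,z_r$ and $z_{r+1},\dots,z_{2r}$ separately; and each surviving term is precisely one summand $e^{-N(\cdots)}Z(\overline{S}+T^{-},\overline{T}+S^{-})$. That residue computation is the piece your write-up must supply.

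Your fallback route is also not sound as stated. Both sides are entire in $(\alpha,\beta)$ --- the left side is a polynomial in $e^{-\alpha_i},e^{-\beta_j}$, and on the right the singularities of the individual terms at $\alpha_i+\beta_j=0$ cancel in the total --- so there is no ``prescribed residue behaviour at $\alpha_i+\beta_j=0$'' to match. More importantly, ``propagating the identity from a convenient specialization'' by analytic continuation does not work: agreement of two holomorphic functions of $2r$ variables at a single point, or on a lower-dimensional confluent locus such as the Bump--Gamburd specialization $\alpha_1=\cdots=\alpha_r$, $\beta_1=\cdots=\beta_r$ used in Theorem~\ref{bump_gamburd_thm}, does not force them to agree identically. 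You would need equality on an open set, or a genuine uniqueness characterization (symmetry, entirety, growth, plus sufficient boundary data), none of which you establish.
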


From the definition \eqref{P_k def}, we have
\begin{equation}
\begin{split}
P_{k}(x)=& \\& \frac{(-1)^{k}x^{kN}}{(2\pi i)^{2k}(k!)^{2}}
\oint\cdots\oint e^{-N(z_{k+1}+\cdots+z_{2k})}\prod_{\substack{1\leq l\leq k\\k+1\leq q \leq 2k}}(1-e^{z_{q}-z_{l}})^{-1}
\\&\times
\frac{\Delta(z_{1},\ldots,z_{2k})^{2}}
{\prod_{i=1}^{2k}((z_{i}-a)z_{i})^{k}}
dz_{1}\cdots dz_{2k}
\end{split}
\end{equation}
where $a=\log x$.


We set $m=cN$ and will consider when $0\leq c\leq k $ is fixed and $N\to \infty$ in such a way that $cN$ is an integer. We will then need to compute the coefficient of $x^{cN}$ in $P_{k}(x)$.

We first shrink the contour in \eqref{complex integral representation} into small circles centered at $0$ and $a$.  This leads to
a sum of $2^{2k}$ multiple integrals, each surrounding either $0$ or $a$; c.f.~the calculation in \cite{KO}.  Taking into account symmetries between the variables and counting the number of ways of picking $\ell$  of the first $k$ contours to surround $a$, and $k-\ell$ of the second $k$ contours to surround $a$, we find
\begin{equation}
P_k(x) = 
\sum_{\ell=0}^k \binom{k}{\ell}^2 P_{k,\ell}(x)
\end{equation}
where $P_{k,\ell}(x)$ is the integral with contours
$z_1,\dots, z_\ell$, $ z_{k+\ell+1}$, $\dots ,z_{2k}$ along small
circles surrounding $a=\log x$ and 
$z_{\ell+1},\dots, z_{k+\ell}$ along small circles surrounding $0$.
The remaining integrals where there are different numbers of
contours surrounding $a$ and $0$ do not contribute, as proved in the following Lemma \ref{lem: zero terms}, which we prove below.

Next we change variables
$$
z_j = \epsilon_j a  + \frac{v_j}N
$$
where
$$
\epsilon_j = \begin{cases} 1,& j=1,\ldots, \ell \; {\rm or}\;  j=k+\ell+1,\ldots, 2k \\
0,& \ell+1\leq j\leq k+\ell \end{cases}
$$
This gives that the integrand of $P_{k,\ell}(x)$ is, up to terms of order $1/N$ smaller,
\begin{equation}\label{1.4}
\frac{x^{-N(k-\ell)}}{N^{2k}}
 \frac { e^{-(v_{k+1}+\ldots +v_{2k})}
 \prod\limits_{\substack{i<j\\ \epsilon_i\neq \epsilon_j}}a^2 \prod\limits_{\substack{i<j\\\epsilon_i=\epsilon_j}} (\frac{v_i-v_j}{N})^2 dv_1\dots dv_{2k}}
{  \prod\limits_{\substack{t \leq k<q\\ \epsilon_t = \epsilon_q}}
\frac{v_q-v_t}{N} \prod\limits_{\substack{t \leq k<q\\
\epsilon_t\neq \epsilon_q }}
 (1-x^{\epsilon_q-\epsilon_t}e^{\frac{v_q-v_t}{N}} )     a^{2k^2}  (-1)^{k^2} \prod\limits_{j=1}^{2k} (\frac{v_j}{N} )^{k}  } \;.
\end{equation}

The number of   pairs  $i<j$ with $\epsilon_i\neq \epsilon_j$ is   $k^2$, hence $\prod_{\substack{i<j\\ \epsilon_i\neq \epsilon_j}}a^2 = a^{2k^2}$;
and the number of   pairs  $i<j$ with $\epsilon_i = \epsilon_j$ is   $\binom{2k}{2}-k^2 =k^2-k$, so that
$$
\prod_{\substack{i<j\\\epsilon_i=\epsilon_j}} (\frac{v_i-v_j}{N})^2 =
\frac 1{N^{2(k^2-k)}} \prod_{\substack{i<j\\\epsilon_i=\epsilon_j}} (v_i-v_j)^2 \;.
$$
The number of pairs $(t,q)$ with $1\leq t\leq k<q\leq 2k$ and $\epsilon_t=\epsilon_q$ is $2\ell(k-\ell)$, hence
$$
\prod_{\substack{t \leq k<q\\ \epsilon_t = \epsilon_q}} \frac{v_q-v_t}{N} =
\frac 1{N^{2\ell(k-\ell)}}  \prod_{\substack{t \leq k<q\\ \epsilon_t = \epsilon_q}} (v_q-v_t) \;.
$$
Therefore \eqref{1.4} is equal to
\begin{equation*}
(-1)^k x^{-N(k-\ell)} N^{ 2\ell(k-\ell)} \frac{
e^{-(v_{k+1}+\ldots +v_{2k})}
\prod\limits_{\substack{i<j\\\epsilon_i=\epsilon_j}} (v_i-v_j)^2
\prod_{j=1}^{2k}\frac{ dv_j}{v_j^k} } { \prod\limits_{\substack{t
\leq k<q\\ \epsilon_t\neq \epsilon_q }}
 (1-x^{\epsilon_q-\epsilon_t}e^{\frac{v_q-v_t}{N}} )
 \prod\limits_{\substack{t \leq k<q\\ \epsilon_t = \epsilon_q}} (v_q-v_t)
} \;.
\end{equation*}
In the denominator, we rewrite the expression
$\prod_{\substack{t \leq
k<q\\ \epsilon_t\neq \epsilon_q }}
 (1-x^{\epsilon_q-\epsilon_t}e^{\frac{v_q-v_t}{N}} )
$
by noting that $x^{\epsilon_q-\epsilon_t}$ is $x$ if $\epsilon_q=1$,
$\epsilon_t=0$, which happens when $t=\ell+1,\dots,k$ and
$q=k+\ell+1,\dots, 2k$, and it equals $x^{-1}$ if $\epsilon_q=0$
and $\epsilon_t=1$, which happens when $t=1,\dots, \ell$ and
$q=k+1,\dots k+\ell$. Thus
\begin{equation*}
\begin{split}
\prod_{\substack{t \leq k<q\\ \epsilon_t\neq \epsilon_q }}
&
 (1-x^{\epsilon_q-\epsilon_t}e^{\frac{v_q-v_t}{N}} )
  =
 \prod_{t=\ell+1}^k\prod_{q=k+\ell+1}^{2k} (1-x
 e^{\frac{v_q-v_t}{N}} )\prod_{t=1}^\ell \prod_{q=k+1}^{k+\ell}
 (1-x^{-1}e^{\frac{v_q-v_t}{N}} )
\\
 &=
(-1)^{\ell}x^{-\ell^2} \prod_{\substack{1\leq t\leq k\\ k+1 \leq q\leq 2k\\
\epsilon_t\neq \epsilon_q }}
 (1-xe^{(\epsilon_q-\epsilon_t)\frac{v_q-v_t}{N}} )\prod_{t=1}^{l}\prod_{q=k+1}^{k+\ell}e^{\frac{v_{t}-v_{q}}{N}}
\end{split}
\end{equation*}

Multiplying by the common pre-factor of
$\frac{(-1)^{k}x^{kN}}{(k!)^2}$ gives that, up to a term of order
$1/N$ smaller,
\begin{multline}
P_{k,\ell}(x)\sim  
 (-1)^{\ell}\frac {x^{\ell (N+\ell)}N^{2\ell(k-\ell)}
}{(k!)^2} \frac 1{(2\pi i)^{2k}}\oint\dots \oint
\prod_{\substack{1\leq t\leq k\\ k+1 \leq q\leq 2k\\
\epsilon_t\neq \epsilon_q }}
 (1-xe^{(\epsilon_q-\epsilon_t)\frac{v_q-v_t}{N}} )^{-1}
\\
\prod_{t=1}^{l}\prod_{q=k+1}^{k+\ell}(e^{\frac{v_{t}-v_{q}}{N}})e^{-(v_{k+1}+\ldots +v_{2k})} \prod_{\substack{1\leq t\leq \ell,\;
k+l+1\leq q\leq 2k\\\rm or\\ \ell+1\leq t\leq k,\; k+1\leq q\leq k+\ell}} (v_q-v_t)
\prod_{\substack{1\leq i<j\leq \ell\\
\rm or\\ k+\ell+1\leq i<j\leq 2k\\
\rm or\\ \ell+1\leq i<j\leq k\\ \rm or \\ k+1\leq i<j\leq k+\ell}}
(v_j-v_i)^2 \prod_{j=1}^{2k}\frac{ dv_j}{v_j^k}
\end{multline}

We need to pick out the coefficient of $x^{cN}$ in $P_{k,\ell}(x)$.
(This coefficient is
automatically $0$ if $\ell(N+\ell)>cN$, so we need only
consider $\ell < c$.)  
We therefore need to find the coefficient of
$x^{cN-\ell(N+\ell)}=x^{(c-\ell)N-\ell^2}$ in
\begin{equation}
\prod_{\substack{1\leq t\leq k\\ k+1 \leq q\leq 2k\\
\epsilon_t\neq \epsilon_q }}
 (1-xe^{(\epsilon_q-\epsilon_t)\frac{v_q-v_t}{N}} )^{-1}.
 \end{equation}
 We can expand the above to get
\begin{equation}
\sum_{m=0}^{\infty} \sum_{\substack{b_{1}+\ldots+b_{\ell^{2}+(k-\ell)^{2}}=m \\ b_{i}\geq 0}}x^{m}\exp(\sum b_{q,t}(\epsilon_q-\epsilon_t)\frac{v_{q}-v_{t}}{N}).
\end{equation}
If we consider the pre-factor of $\prod_{t=1}^{\ell}\prod_{q=k+1}^{k+\ell}e^{\frac{v_{t}-v_{q}}{N}}$, then the required coefficient is
\begin{equation}
\label{trSym}
\tr \sym^{(c-\ell)N} \exp(\frac 1N V )
\end{equation}
where $V:=\diag(v_{q}-v_{t})$ for $q$ and $t$ such that $1\leq t\leq k,~~~ k+1 \leq q\leq 2k$ and $\epsilon_t\neq \epsilon_q.$
Next, we use Lemma~\ref{lem:geom sum}, proved below, to deduce that the expression \eqref{trSym} is
\begin{equation}
((c-\ell)N)^{k^2-2\ell(k-\ell)-1} J_\ell((c-\ell)\vec v )
\end{equation}
with
\begin{equation}
J_\ell(v_1,\dots, v_{2k})=\int_{\substack{ \sum x_{t,q}=1\\x_{tq\geq 0}}} e^{\sum
x_{tq}(\epsilon_q-\epsilon_t)(v_q-v_t) } \prod dx_{tq}
\end{equation}
where the $\ell^2+(k-\ell)^2$ variables $x_{tq}$ have indices $1\leq t\leq k$,
$k+1 \leq q\leq 2k$ with $\epsilon_t\neq \epsilon_q$, that is either $1\leq t\leq \ell$, $k+1\leq q\leq k+\ell$ or $\ell+1\leq t\leq k$, $k+\ell+1\leq q\leq 2k$.


Since $P_{k,\ell}(x)$ also has a factor of $N^{2\ell(k-\ell)}$, we
get a total contribution of $N^{k^2-1} (c-\ell)^{k^2-2\ell(k-\ell)-1} g_{k,\ell}(c-\ell)$ where
\begin{multline}
g_{k,\ell}(c-\ell)=\frac {(-1)^{\ell}}{(k!)^2} \frac 1{(2\pi
i)^{2k}}\oint\dots \oint 
J_\ell((c-\ell)\vec v )
\\
e^{-(v_{k+1}+\ldots +v_{2k})} \prod_{\substack{1\leq t\leq \ell,\;
k+\ell+1\leq q\leq 2k\\\rm or\\ \ell+1\leq t\leq k,\; k+1\leq q\leq
k+\ell}} (v_q-v_t)
\prod_{\substack{1\leq i<j\leq \ell\\
\rm or\\ k+\ell+1\leq i<j\leq 2k\\
\rm or\\ \ell+1\leq i<j\leq k\\ \rm or \\ k+1\leq i<j\leq k+\ell}}
(v_j-v_i)^2 \prod_{j=1}^{2k}\frac{ dv_j}{v_j^k}
\end{multline}

The prefactor $g_{k,\ell}(c-\ell)$ depends polynomially on $c-\ell$,
because to compute it we need to compute derivatives of $J_\ell((c-\ell) \vec
v)$ at $\vec v=0$, which are clearly polynomial in $(c-\ell)$.

Summing these over $0\leq \ell<c$ gives an expression of the form
$\gamma_k(c)N^{k^2-1}$, where
\begin{equation}
\gamma_k(c) = \sum_{0\leq \ell<c} \binom{k}{\ell}^2(c-\ell)^{k^2-2\ell(k-\ell)-1} g_{k,\ell}(c-\ell),
\end{equation}
as was to be proved.

It remains now to prove the two lemmas we have used.  This we do in the following subsections.

\subsubsection{Vanishing of an integral}
Denote by $P_{k}(x;a\epsilon_{1},\ldots,a\epsilon_{2k})$ the integral $P_{k}(x)$ over the circular contours centered in $a\epsilon_{i}$ when $\epsilon_{i}$ can be either zero or one.
\begin{lemma}\label{lem: zero terms}
Let the number of $\epsilon_{i}$ which are equal to $1$ and the number which are equal to $0$ be different. Then the integral $P_{k}(x;a\epsilon_{1},\ldots,a\epsilon_{2k})$ is identically zero.
\end{lemma}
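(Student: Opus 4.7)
The plan is to reduce the lemma to a direct residue calculation. After substituting $w_i = z_i - a\epsilon_i$, each contour becomes a small circle around $w_i = 0$ and the integrand acquires poles of order $k$ at each $w_i = 0$. The value of $P_k(x;a\epsilon_1,\ldots,a\epsilon_{2k})$ is then (up to the explicit prefactor $(-1)^k x^{kN}/(2\pi i)^{2k}(k!)^2$) the $2k$-fold residue, which equals the coefficient of $\prod_{i=1}^{2k} w_i^{k-1}$ in the analytic function obtained by stripping the $w_i^{-k}$ pole factors. The apparent diagonal singularities of $(1-e^{z_q-z_l})^{-1}$ at $z_q = z_l$ (which occur when $\epsilon_l = \epsilon_q$) cancel against zeros of $\Delta(z)^2$: for $l \leq k < q$ one has $(z_q-z_l)^2/(1-e^{z_q-z_l}) = (z_l-z_q)\, h(z_l-z_q)$, where $h(w) := w/(1-e^{-w})$ is analytic with $h(0) = 1$, so after this simplification the integrand is genuinely analytic near $w=0$.

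Next, I would factor the numerator according to $\epsilon$-type. Write $S_a := \{i : \epsilon_i = 1\}$, $S_0 := \{i : \epsilon_i = 0\}$, and let $\ell_1$ (resp.\ $\ell_2$) count the elements of $S_a$ with index $\leq k$ (resp.\ $>k$). The ``vanishing part'' of the numerator splits as $P_a(w|_{S_a}) \cdot P_0(w|_{S_0})$, where $P_a$ collects the two intra-$S_a$ Vandermonde-squared factors $\Delta(w|_{S_a \cap \{1,\ldots,k\}})^2$ and $\Delta(w|_{S_a \cap \{k+1,\ldots,2k\}})^2$ together with the cross factors $(w_l - w_q)$ for pairs $l \leq k < q$ both in $S_a$; $P_0$ is defined analogously. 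Both $P_a$ and $P_0$ are homogeneous polynomials in their respective variable subsets, of degrees $D_a := \ell_1(\ell_1-1) + \ell_2(\ell_2-1) + \ell_1\ell_2$ and $D_0 := (k-\ell_1)(k-\ell_1-1) + (k-\ell_2)(k-\ell_2-1) + (k-\ell_1)(k-\ell_2)$. Everything else---the mixed-$\epsilon$ contributions to $\Delta^2$, the various $h(\cdot)$'s, the exponential $e^{-N\sum_{q>k} z_q}$, and the analytic remnants of $(z_i(z_i-a))^{-k}$ after extracting the $w_i^{-k}$ poles---combines into an analytic factor $Q(w)$ with $Q(0) \neq 0$.

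Writing $L := \sum_i \epsilon_i$, the lemma reduces to showing that the coefficient of $\prod_i w_i^{k-1}$ in $P_a \cdot P_0 \cdot Q$ vanishes whenever $L \neq k$. A direct homogeneity argument disposes of many configurations: whenever $D_a > (k-1)L$ (equivalently $D_0 > (k-1)(2k-L)$), the minimal $w$-degree carried by $P_a$ (resp.\ $P_0$) in its variable subset strictly exceeds the degree $(k-1)L$ (resp.\ $(k-1)(2k-L)$) required by the residue, forcing the coefficient to vanish. The hard part---and the main obstacle---is treating the intermediate unbalanced configurations where this crude degree count is insufficient (for example $k=5$, $\ell_1=\ell_2=3$, where $D_a=21<24=(k-1)L$). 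To handle these, I would deform to the non-degenerate setting by perturbing the coincident poles to distinct $\alpha_i$ near $a$ and distinct $\beta_j$ near $0$, apply the CFKRS formula (Theorem \ref{partial_sums}), whose sum is restricted to $(S,T)\subseteq A\times B$ with $|S|=|T|$, and then pass to the degenerate limit. In this limit the contour configurations $\epsilon$ of Theorem \ref{complex integral representation} correspond to CFKRS subsets $(S,T)$, and the constraint $|S|=|T|$ translates exactly to $L=k$; configurations with $L\neq k$ match pairs with $|S|\neq|T|$, which are absent from the CFKRS sum and hence contribute zero.
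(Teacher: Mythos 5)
Your framing is sound as far as it goes: after the substitution $w_i = z_i - a\epsilon_i$ the integral is indeed the coefficient of $\prod_i w_i^{k-1}$ in an analytic function, and the diagonal singularities of $(1-e^{z_q-z_l})^{-1}$ are absorbed by zeros of the Vandermonde. But both of your two arguments leave a hole. The homogeneity count on $P_a\cdot P_0$ rules out only the configurations with $D_a > (k-1)L$ or $D_0 > (k-1)(2k-L)$; as you notice yourself, this fails in the intermediate range (your $k=5$, $\ell_1=\ell_2=3$ example, where $L=6\neq 5=k$ but $D_a=21<24$ and $D_0=8<16$), which is precisely a case the lemma must cover. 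The CFKRS fallback could in principle close the gap, but as written it is an assertion rather than a proof: you would have to justify that the degenerate limit of the balanced sum over $(S,T)$ in Theorem~\ref{partial_sums} reproduces, term by term, the shrunken-contour decomposition of Theorem~\ref{complex integral representation}, with each unbalanced residue contributing zero — yet the vanishing of those individual residues is exactly what one is trying to establish, so this route is close to circular without a careful independent limiting argument.

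The paper's proof sidesteps all of this with one observation. Do not carve up $\Delta^2$ by $\epsilon$-type; instead peel off a \emph{single} Vandermonde factor $\Delta(z_1,\ldots,z_{2k})$ and put everything else (the other $\Delta$, the $(1-e^{z_q-z_l})^{-1}$ factors, the exponential, and the $(z_i-a)^{-k}$'s) into a function $G$ that is analytic in the variables centered at $0$; the remaining Vandermonde in $G$ suffices to cancel the simple diagonal poles. Assume WLOG that at least $k+1$ of the $\epsilon_i$ vanish, say $\epsilon_1=\cdots=\epsilon_{k+1}=0$. By the residue theorem the integral over $z_1,\ldots,z_{k+1}$ is the coefficient of $\prod_{i=1}^{k+1} z_i^{k-1}$ in $G\cdot\Delta$. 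Expanding $\Delta=\sum_{\sigma\in S_{2k}}\Sgn(\sigma)\prod_i z_i^{\sigma(i)-1}$, each monomial already carries exponent $\sigma(i)-1$ on $z_i$, and $G$ only adds nonnegative exponents. Producing exponent $k-1$ on every one of $z_1,\ldots,z_{k+1}$ therefore requires $\sigma(i)\leq k$ for all $i\in\{1,\ldots,k+1\}$, which is impossible by pigeonhole since $\sigma$ is a permutation of $\{1,\ldots,2k\}$ and only $k$ of its values lie in $\{1,\ldots,k\}$. This kills every unbalanced configuration uniformly, with no case analysis and no appeal to the CFKRS expansion.
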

\begin{proof}
We consider the case in which there are more zeros then ones. The case in which there are more ones then zeros, can be deduced in the same way. We can choose (without loss of generality) $\epsilon_{1},\ldots,\epsilon_{k+1}$ to be zero.
\\Denote
\begin{equation}
G(z_{1},\ldots,z_{k+1}):= e^{-N(z_{k+1}+\cdots+z_{2k})}\prod_{\substack{1\leq l\leq k\\k+1\leq q \leq 2k}}(1-e^{z_{q}-z_{l}})^{-1}\frac{\Delta(z_{1},\ldots,z_{2k})}
{\prod_{i=1}^{2k}(z_{i}-a)^{k}\prod_{i=k+2}^{2k}(z_{i})^{k}}
\end{equation}
This function is analytic around zero. The poles that arise when $z_{q}=z_{l}$ cancel with the vandermonde determinant.
  Next, we use the residue theorem in order to compute the integral. Consider the vandermonde determinant expansion:
$$ \Delta(z_{1},\ldots,z_{2k})=\sum_{\sigma\in S_{2k}}\Sgn(\sigma)(\prod_{i=1}^{2k}(z_{i})^{\sigma(i)-1})$$
By the residue theorem we need to show that the coefficient of $\prod_{i=1}^{k+1}(z_{i})^{k-1}$ in the product $G(z_{1},\ldots,z_{k+1})\Delta(z_{1},\ldots,z_{2k})$ is zero. For this purpose, since $G(z_{1},\ldots,z_{k+1})$ is analytic around zero, it is enough to show that there is no monomial term in the expansion of $\Delta(z_{1},\ldots,z_{2k})$ of the form $\prod_{i=1}^{k+2}(z_{i})^{\sigma(i)-1}$ with $\sigma(i)-1\leq k-1$ for $i=1,\ldots,k+1$. Since $\sigma$ is a permutation this is clearly the case.
\end{proof}

\subsubsection{A lemma on geometric sums}

Let $V = \diag(v_1,\dots, v_d)$ be a diagonal $d\times d$ matrix,
and $M$ a large parameter. We want to compute the asymptotic
behaviour of
\begin{equation}
\tr \sym^M \exp(\frac 1M V )  = \sum_{\substack{ k_1+\ldots +k_d=M
\\ k_1,\dots,k_d \geq 0}} \exp(\frac 1M\sum_{j=1}^d k_jv_j)
\end{equation}
This is the coefficient of $x^M$ in the power series expansion of
$$ \det(I-x\exp(\frac 1MV) )^{-1} = \frac 1{\prod_{j=1}^d (1-e^{v_j/M}x)}$$

\begin{lemma}\label{lem:geom sum}
As $M\to \infty$,
$$
\tr \sym^M \exp(\frac 1M V )=M^{d-1}
\iint_{\substack{x_1+\ldots +x_d=1\\x_j\geq 0}} e^{ \sum x_j v_j }
dx_1\dots dx_d
+O(M^{d-2})
$$
\end{lemma}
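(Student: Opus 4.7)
The plan is an elementary Riemann-sum argument. Writing $f(x_1,\ldots,x_d) := \exp(v_1 x_1 + \cdots + v_d x_d)$, the left-hand side reads
\begin{equation*}
\tr\sym^M \exp(\tfrac{1}{M}V) = \sum_{\substack{k_1+\cdots+k_d = M\\ k_j\geq 0}} f\left(\frac{k_1}{M},\ldots,\frac{k_d}{M}\right),
\end{equation*}
which is manifestly a sum of the smooth function $f$ over the lattice points, of spacing $1/M$, on the closed $(d-1)$-simplex $\Delta = \{x\in \R^d_{\geq 0}: x_1+\cdots+x_d = 1\}$. I would parametrize $\Delta$ by $y = (x_1,\ldots,x_{d-1})$ with $x_d = 1-|y|$, so that it becomes the solid simplex $\tilde\Delta = \{y\in \R^{d-1}_{\geq 0}: |y|\leq 1\}$, and the lattice points correspond to $y\in \frac{1}{M}\Z^{d-1}_{\geq 0}$ with $|y|\leq 1$.

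The main step is the classical comparison of this Riemann sum with the corresponding integral. Attach to each lattice point $y$ the hypercube $C_y = \prod_{j=1}^{d-1}[y_j, y_j + 1/M)$ of $(d-1)$-volume $M^{-(d-1)}$. Writing $\tilde f(y) := f(y, 1-|y|)$, one has $\|\nabla \tilde f\|_\infty = O_V(1)$ on $\tilde\Delta$, so a first-order Taylor expansion yields
\begin{equation*}
\int_{C_y} \tilde f(z)\, dz = \frac{\tilde f(y)}{M^{d-1}} + O_V(M^{-d})
\end{equation*}
for each of the $O(M^{d-1})$ lattice points whose cube $C_y$ lies inside $\tilde\Delta$. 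The boundary hypercubes straddling the hyperplane $|y|=1$ number $O(M^{d-2})$ (since they live in a strip of width $O(1/M)$ around a fixed hyperplane of finite $(d-2)$-dimensional area), and each contributes $O(M^{-(d-1)})$ in absolute value to both the sum and the integral. Collecting interior and boundary contributions and multiplying by $M^{d-1}$ produces
\begin{equation*}
\sum_y \tilde f(y) = M^{d-1}\int_{\tilde\Delta} \tilde f(z)\, dz + O_V(M^{d-2}),
\end{equation*}
which is the lemma, once the notation $\iint_{\sum x_j = 1,\, x_j\geq 0}\cdots\, dx_1\cdots dx_d$ is interpreted, in the customary way, as the $(d-1)$-dimensional integral in the coordinates $(y_1,\ldots,y_{d-1})$.

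The only genuine subtlety in this argument is the careful bookkeeping of boundary lattice cubes, but this is entirely routine since the bounding hyperplane is fixed and independent of $M$. As an alternative route, one may extract the coefficient of $x^M$ from the generating function $\prod_j (1-e^{v_j/M}x)^{-1}$ by partial fractions (when the $v_j$ are distinct), producing the leading term $M^{d-1}\sum_j e^{v_j}/\prod_{k\neq j}(v_j - v_k)$, and then invoking the classical divided-difference identity $\sum_j e^{v_j}/\prod_{k\neq j}(v_j-v_k) = \int_\Delta e^{v\cdot x}\, dx$; this works but requires an extra continuity argument when some of the $v_j$ coincide, so the Riemann-sum approach above is cleaner and handles all cases uniformly.
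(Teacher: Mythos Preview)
Your argument is correct and is exactly the Riemann-sum approach the paper uses; the paper's proof is in fact the single line ``dividing by $M^{d-1}$ we get a Riemann sum \ldots\ $+O(1/M)$,'' and you have simply supplied the standard details (interior cubes via Taylor expansion, $O(M^{d-2})$ boundary cubes) that justify that claim.
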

\begin{proof}
Dividing by $M^{d-1}$ we get a Riemann sum
\begin{equation*}
\frac 1{M^{d-1}} \sum_{\substack{ k_1+\ldots +k_d=M \\ k_1,\dots,k_d
\geq 0}} e^{\frac 1M\sum_{j=1}^d k_jv_j } =
\iint_{\substack{x_1+\ldots +x_d=1\\x_j\geq 0}} e^{ \sum x_j v_j }
dx_1\dots dx_d + O(\frac 1M)
\end{equation*}

\end{proof}

\subsubsection{Example: leading coefficient in the range $0<c\leq 1$ }
The leading coefficient in $I_{k}(cN,N)$ (i.e. the coefficient of $N^{k^{2}-1}$) when $0<c\leq 1$, can be obtained from \eqref{eval I first range}.
Thus for $0<c\leq 1$, we have $\gamma_k(c)=\frac{c^{k^{2}-1}}{(k^{2}-1)!}$.
We now verify that the complicated expression that we got in this section for the leading coefficient $\gamma_k(c)$, agrees with the above.
Note that because of the functional equation, Lemma \ref{functional eq}, we can conclude that this holds also in the range $(k-1)N\leq c\leq kN$

The leading coefficient of $I_{k}(cN,N)$ when $0<c\leq 1$ is $\gamma_k(c)=c^{k^{2}-1}g_{k,0}(c)$ where
\begin{multline}
g_{k,0}(c)=\frac {1}{(k!)^2} \frac 1{(2\pi
i)^{2k}}\oint\dots \oint 
J_{0}(c\vec v )
e^{-(v_{k+1}+\ldots +v_{2k})} \\
\prod_{\substack{1\leq i<j\leq k\\
\rm or\\ k+1\leq i<j\leq 2k}}
(v_j-v_i)^2 \prod_{j=1}^{2k}\frac{ dv_j}{v_j^k}
\\=\frac {1}{(k!)^2} \frac 1{(2\pi
i)^{2k}}\oint\dots \oint 
J_{0}(c\vec v )
e^{-(v_{k+1}+\ldots +v_{2k})}\\
\Delta(v_1,\ldots,v_k)^{2}\Delta(v_{k+1},\ldots,v_{2k})^{2}\prod_{j=1}^{2k}\frac{ dv_j}{v_j^k}
\end{multline}
By the residue theorem, in order to compute $g_{k,0}(c)$ we need to find the coefficient of $\prod_{j=1}^{2k}v_j^{k-1}$ in the expansion of
$$J_{0}(c\vec v )e^{-(v_{k+1}+\ldots +v_{2k})}
\Delta(v_1,\ldots,v_k)^{2}\Delta(v_{k+1},\ldots,v_{2k})^{2}.$$
Consider the vandermonde determinant expansion:
\begin{equation}
\Delta(v_1,\ldots,v_k)^{2}=\sum_{\sigma,\sigma'\in S_{k}}\Sgn(\sigma)\Sgn(\sigma')\prod_{i=1}^{k}(v_{i})^{\sigma(i)+\sigma'(i)-2}
\end{equation}
We are looking for terms of the form $\prod_{i=1}^{k}(v_{i})^{\sigma(i)+\sigma'(i)-2}$ with
$\sigma(i)+\sigma'(i)-2\leq k-1$ for all $1\leq i\leq k$. Since $\sigma(i)$ and $\sigma'(i)$ are permutations, the only such term is
$k!\prod_{i=1}^{k}(v_{i})^{k-1}$. In the same way, the only possible contribution from $\Delta(v_{k+1},\ldots,v_{2k})^{2}$ to the integral comes from the term $k!\prod_{i=k+1}^{2k}(v_{i})^{k-1}$. That means that the term $J_{0}(c\vec v )e^{-(v_{k+1}+\ldots +v_{2k})}$ can contribute only a constant.
Therefore, the calculation comes down to verifying that $J_{0}(c\vec v )=\frac{1}{(k^{2}-1)!}$ when $\vec v =0$. This is indeed the case, since when $\vec v =0$, $J_{0}(c\vec v )$ is the volume of a $k^{2}-1$ dimensional simplex, which is $\frac{1}{(k^{2}-1)!}$ as required.

\section{Justification of Conjecture~\ref{conj:mean square SI}}\label{Conj}
Our final goal is to sketch briefly a justification for
Conjecture~\ref{conj:mean square SI} without reference to the
function field results in the body of the paper.  In addition, we
indicate how to generate a conjecture for the lower order terms in
the asymptotic expansion (\ref{conj:mean square SI}), as noted at
the end of Section~\ref{Conj-intro}.

We start by defining
\begin{equation}\label{Q}
Q_k(\alpha, T)=\frac{1}{T(\log T)^{k^2}}\int_0^T\zeta(\frac{1}{2}+\frac{i\alpha}{\log T}+\i t)^k\zeta(\frac{1}{2}+\frac{i\alpha}{\log T}-\i t)^k dt.
\end{equation}
We have the Riemann-Stieljes integral identity,
$$
\zeta^k(1/2+i\alpha/\log T + it) = \int_{-\infty}^\infty e^{-i\alpha x/\log T} e^{-ixt} e^{-x/2} d\Delta_k(e^x).
$$
Substituting this into (\ref{Q}) and swapping the order of integration, we find that
$$
Q_k(\alpha, T) \sim \frac{T}{(\log T)^{k^2-1}} \int_{-\infty}^{\infty} e^{-2i\alpha u} \frac{1}{T^u} \Delta_k^2(T^u;\, T^{u-1})\,du.
$$
Hence, by Fourier inversion, on average
\begin{equation}\label{invert}
\frac{T^{1-v}}{(\log T)^{k^2-1}}\Delta_k^2(T^v;\, T^{v-1})\sim\int_{-\infty}^{\infty}Q_k(\pi\beta, T){\rm e}^{2\pi i\beta v}d\beta
\end{equation}
Conjecture~\ref{conj:mean square SI} now follows from a conjecture of K\"osters \cite{K}:
\begin{equation}
\label{zetaRMTconstant}
\lim_{T\rightarrow\infty}Q_k(\alpha, T) = a_k \lim_{N\rightarrow\infty} W_k(\alpha,N),
\end{equation}
where we write
$$
W_k(\alpha,N)= \frac{1}{N^{k^2}} \int_{U(N)} \det(1-e^{-i\alpha/N}U)^k \det(1-e^{-i\alpha/N}U^*)^k\,dU
$$
and $a_k$ is given by \eqref{Lester const}. (This is a matter of coupling equation (1.2) and Conjecture 1.2 of \cite{K}.)  We then have, using \eqref{generating} to expand the random matrix integral,
\begin{align}
\label{RMT_Riemannsum}
\notag W_k(\alpha,N) & = \frac{1}{N^{k^2}} \sum_{0\leq m \leq kN} I_k(m;N) e^{-2i\alpha m/N} \\
\notag &= \frac{1}{N^{k^2}} \sum_{0\leq m \leq kN} e^{-2i\alpha m/N}\big(\gamma_k(m/N) N^{k^2-1}+O_k(N^{k^2-2})\big) \\
\notag &\sim \frac{1}{N} \sum_{m\geq 0} e^{-2i\alpha m/N} \gamma_k(m/N) \mathbf{1}_{[0,k]}(m/N) \\
&\sim \int_0^k e^{-2i\alpha u} \gamma_k(u)\,du,
\end{align}
as the last sum is a Riemann sum.

Setting $X = T^u$ and $H=T^{u-1}$ in \eqref{invert} implies that for $x \approx X$, on average
$$
\frac{\Delta_k^2(x;\,H)}{H (\tfrac{\log X}{u})^{k^2-1}} \sim a_k \gamma_k(u).
$$
We may impose $H=X^\delta$ by setting $u = 1/(1-\delta)$. The restriction that $u\in [0,k]$ becomes $\delta\in [0,1-1/k]$ and the Conjecture follows.

The expression \eqref{zetaRMTconstant} follows from conjectures in \cite{cfkrs2} which relate $Q_k(\alpha, T)$ to a combinatorial sum, like that in Theorem~\ref{partial_sums}, and to a multiple contour integral, like that in Theorem~\ref{complex integral representation}, which include arithmetic factors \cite{K}.  Specifically, it follows from a leading-order asymptotic evaluation of the multiple contour integral that is similar to the calculation given here in Section~\ref{asymptotic2}.  A calculation of lower order terms, as in Section~\ref{sec:other ranges} of the present paper, leads to a polynomial of order $k^2-1$ in the variable $\log X$ for the second moment of $\Delta_k$.

 \end{document}